\documentclass{amsart}[11pt]

\usepackage[
left=1.25in, right=1.25in]{geometry}
\usepackage[all]{xy}
\usepackage{graphicx}
\usepackage{indentfirst}
\usepackage{bm}
\usepackage{amsthm}
\usepackage{mathrsfs}
\usepackage{latexsym}
\usepackage{amsmath}
\usepackage{amssymb}
\usepackage{color}
\usepackage{hyperref}
\usepackage{microtype}

\usepackage{todonotes}




\newcommand{\sC}{\mathscr{C}}

\newcommand{\sI}{\mathscr{I}}

\begin{document}
\theoremstyle{plain}
\newtheorem{theorem}{Theorem~}[section]
\newtheorem{main}{Main Theorem~}
\newtheorem{lemma}[theorem]{Lemma~}
\newtheorem{proposition}[theorem]{Proposition~}
\newtheorem{corollary}[theorem]{Corollary~}
\newtheorem{definition}[theorem]{Definition~}
\newtheorem{notation}[theorem]{Notation~}
\newtheorem{example}[theorem]{Example~}
\newtheorem{remark}[theorem]{Remark~} 
\newtheorem*{cor}{Corollary~}
\newtheorem*{question}{Question}
\newtheorem*{claim}{Claim}
\newtheorem*{conjecture}{Conjecture~}
\newtheorem*{fact}{Fact~}
\renewcommand{\proofname}{\bf Proof}

\newcommand{\ty}{\tilde{y}}

\newcommand{\graza}{
\raisebox{-.3cm}{
\begin{tikzpicture}
\begin{scope}[xscale=.7,yscale=.7]
\draw[thick] (0,0)--(0,1);
\draw[thick] (.5,0) arc (180:0:.25 and .4);
\draw[thick] (.5,1) arc (180:360:.25 and .4);
\end{scope}
\end{tikzpicture}
}
}

\newcommand{\grazb}{
\raisebox{-.3cm}{
\begin{tikzpicture}
\begin{scope}[xscale=.7,yscale=.7]
\draw[thick] (1,0)--(1,1);
\draw[thick] (0,0) arc (180:0:.25 and .4);
\draw[thick] (0,1) arc (180:360:.25 and .4);
\end{scope}
\end{tikzpicture}
}
}

\newcommand{\grazc}{
\raisebox{-.3cm}{
\begin{tikzpicture}
\begin{scope}[xscale=.7,yscale=.7]
\draw[thick] (0,0)--(0,1);
\draw[thick] (.5,0)--(.5,1);
\draw[thick] (1,0)--(1,1);
\end{scope}
\end{tikzpicture}
}
}

\newcommand{\grazd}{
\raisebox{-.3cm}{
\begin{tikzpicture}
\begin{scope}[xscale=.7,yscale=.7]
\draw[thick] (0,0)--(1,1);
\draw[thick] (.5,0) arc (180:0:.25 and .4);
\draw[thick] (0,1) arc (180:360:.25 and .4);
\end{scope}
\end{tikzpicture}
}
}

\newcommand{\graze}{
\raisebox{-.3cm}{
\begin{tikzpicture}
\begin{scope}[xscale=.7,yscale=.7]
\draw[thick] (1,0)--(0,1);
\draw[thick] (0,0) arc (180:0:.25 and .4);
\draw[thick] (.5,1) arc (180:360:.25 and .4);
\end{scope}
\end{tikzpicture}
}
}

\newcommand{\grazf}{
\raisebox{-.3cm}{
\begin{tikzpicture}
\begin{scope}[xscale=.7,yscale=.7]
\draw[thick] (1,0)--(1,1);
\draw[thick] (0,0)--(.5,1);
\draw[thick] (.5,0)--(0,1);
\node at (0,.5) {\tiny{$R$}};
\end{scope}
\end{tikzpicture}
}
}

\newcommand{\grazg}{
\raisebox{-.3cm}{
\begin{tikzpicture}
\begin{scope}[xscale=.7,yscale=.7]
\draw[thick] (.5,0)--(1,1);
\draw[thick] (0,0) arc (180:0:.5 and .4);
\draw[thick] (0,1) arc (180:360:.25 and .4);
\node at (.75,.15) {\tiny{$R$}};
\end{scope}
\end{tikzpicture}
}
}

\newcommand{\grazh}{
\raisebox{-.3cm}{
\begin{tikzpicture}
\begin{scope}[xscale=.7,yscale=.7]
\draw[thick] (1,0)--(.5,1);
\draw[thick] (0,0) arc (180:0:.25 and .4);
\draw[thick] (0,1) arc (180:360:.5 and .4);
\node at (.75,.85) {\tiny{$R$}};
\end{scope}
\end{tikzpicture}
}
}

\newcommand{\grazi}{
\raisebox{-.3cm}{
\begin{tikzpicture}
\begin{scope}[xscale=.7,yscale=.7]
\draw[thick] (0,0)--(0,1);
\draw[thick] (1,0)--(.5,1);
\draw[thick] (.5,0)--(1,1);
\node at (.75,.15) {\tiny{$R$}};
\end{scope}
\end{tikzpicture}
}
}

\newcommand{\grazj}{
\raisebox{-.3cm}{
\begin{tikzpicture}
\begin{scope}[xscale=.7,yscale=.7]
\draw[thick] (0,0)--(.5,1);
\draw[thick] (.5,0) arc (180:0:.25 and .4);
\draw[thick] (0,1) arc (180:360:.5 and .4);
\node at (.65,.85) {\tiny{$R$}};
\end{scope}
\end{tikzpicture}
}
}

\newcommand{\grazk}{
\raisebox{-.3cm}{
\begin{tikzpicture}
\begin{scope}[xscale=.7,yscale=.7]
\draw[thick] (.5,0)--(0,1);
\draw[thick] (0,0) arc (180:0:.5 and .4);
\draw[thick] (.5,1) arc (180:360:.25 and .4);
\node at (0,.5) {\tiny{$R$}};
\end{scope}
\end{tikzpicture}
}
}

\newcommand{\grazl}{
\raisebox{-.3cm}{
\begin{tikzpicture}
\begin{scope}[xscale=.7,yscale=.7]
\draw[thick] (.5,0)--(.5,1);
\draw[thick] (0,0) arc (180:0:.5 and .4);
\draw[thick] (0,1) arc (180:360:.5 and .4);
\node at (.7,.85) {\tiny{$R$}};
\node at (.7,.15) {\tiny{$R$}};
\end{scope}
\end{tikzpicture}
}
}

\newcommand{\grazm}{
\raisebox{-.3cm}{
\begin{tikzpicture}
\begin{scope}[xscale=.7,yscale=.7]
\draw[thick] (0,0)--(1,1);
\draw[thick] (.5,0)--(0,1);
\draw[thick] (1,0)--(.5,1);
\node at (.1,.35) {\tiny{$R$}};
\node at (.75,.95) {\tiny{$R$}};
\end{scope}
\end{tikzpicture}
}
}

\newcommand{\grazn}{
\raisebox{-.3cm}{
\begin{tikzpicture}
\begin{scope}[xscale=.7,yscale=.7]
\draw[thick] (1,0)--(0,1);
\draw[thick] (0,0)--(.5,1);
\draw[thick] (.5,0)--(1,1);
\node at (.1,.65) {\tiny{$R$}};
\node at (.75,.05) {\tiny{$R$}};
\end{scope}
\end{tikzpicture}
}
}

\newcommand{\grazo}{
\raisebox{-.3cm}{
\begin{tikzpicture}
\begin{scope}[xscale=.7,yscale=.7]
\draw[thick] (0,0)--(1,1);
\draw[thick] (1,0)--(0,1);
\draw[thick] (.3,0)--(.3,1);
\node at (.1,.65) {\tiny{$R$}};
\node at (.45,.15) {\tiny{$R$}};
\node at (.55,.75) {\tiny{$R$}};
\end{scope}
\end{tikzpicture}
}
}

\newcommand{\grazp}{
\raisebox{-.3cm}{
\begin{tikzpicture}
\begin{scope}[xscale=.7,yscale=.7]
\draw[thick] (0,0)--(1,1);
\draw[thick] (1,0)--(0,1);
\draw[thick] (.6,0)--(.6,1);
\node at (.1,.5) {\tiny{$R$}};
\node at (.75,.05) {\tiny{$R$}};
\node at (.75,.95) {\tiny{$R$}};
\end{scope}
\end{tikzpicture}
}
}

\newcommand{\graba}{
\raisebox{-.3cm}{
\begin{tikzpicture}
\begin{scope}[xscale=.7,yscale=.7]
\draw[thick] (0,0)--(0,1);
\draw[thick] (2/3,0)--(2/3,1);
\end{scope}
\end{tikzpicture}
}
}

\newcommand{\grabb}{
\raisebox{-.3cm}{
\begin{tikzpicture}
\begin{scope}[xscale=.7,yscale=.7]
\draw[thick] (0,0) arc (180:0:1/3);
\draw[thick] (0,1) arc (180:360:1/3);
\end{scope}
\end{tikzpicture}
}
}

\newcommand{\grabc}{
\raisebox{-.3cm}{
\begin{tikzpicture}
\begin{scope}[xscale=.7,yscale=.7]
\draw[thick] (0,0)--(.8,1);
\draw[thick] (.8,0)--(0,1);
\node at (.1,1/2) {\tiny{$R$}};
\end{scope}
\end{tikzpicture}
}
}

\title{The Grothendieck Ring of a Family of Spherical Categories}

\author{Zhengwei Liu}
\address[Z. Liu]{Yau Mathematical Science Center and Department of Mathematics, Tsinghua University, Beijing, 100084, China}
\email{liuzhengwei@mail.tsinghua.edu.cn}

\author{Christopher Ryba}
\address[C. Ryba]{Department of Mathematics, Massachusetts Institute of Technology, Cambridge, MA 02139, USA}
\email{ryba@mit.edu}



\maketitle


\begin{abstract}
The first author constructed a $q$-parameterized spherical category $\sC$ over $\mathbb{C}(q)$ in \cite{LiuYB}, whose simple objects are labelled by all Young diagrams. In this paper, we compute closed-form expressions for the fusion rule of $\sC$, using Littlewood-Richardson coefficients, as well as the characters (including a generating function), using symmetric functions with infinite variables. 
\end{abstract}

\section{Introduction}
Jones introduced planar algebras in \cite{Jon99} inspired by subfactor theory and knot theory. The topological notion of (spherical) planar algebra is parallel to the algebraic notion of (spherical) pivotal, monoidal category. The first author investigated the skein-theoretical classification of planar algebras and discovered a continuous family of unshaded planar algebras over $\mathbb{C} $ from the classification of Yang-Baxter relation planar algebras in \cite{LiuYB}. 
This family was constructed in terms of $q$-parameterized generators and relations in linear skein theory.
This family could be regarded as a planar algebra $\sC$ over $\mathbb{C}(q)$ with a generic parameter $q$.
The canonical idempotent category of this unshaded planar algebra $\sC$ is a $\mathbb{Z}_2$-graded spherical monoidal category. 
It was shown in \cite{LiuYB} that the Grothendick ring $\mathcal{G}$ of $\sC$  has simple objects $X_{\lambda}$ labelled by all Young diagrams $\lambda$ with an explicit construction of a minimal idempotent $\ty_{\lambda}$ in $\sC$ which represents $X_{\lambda}$.

The main purpose of this paper is computing the fusion rule of $\mathcal{G}$,
\begin{align}\label{Equ: fusion rule}
X_{\mu}X_{\nu}=\sum_{\lambda} R_{\mu, \nu}^ \lambda X_{\lambda},  
\end{align}
where $R_{\mu, \nu}^ \lambda \in \mathbb{N}$ is called the fusion coefficient for Young diagrams $\mu$, $\nu$, $\lambda$.
We compute the fusion coefficient $R_{\mu, \nu}^ \lambda$ in a closed-form expression in Theorem \ref{Thm: Main3},
\begin{align}
R_{\mu, \nu}^ \lambda &= \sum_{\alpha, \beta, \gamma} c_{\alpha, \beta}^\mu c_{\beta', \gamma}^\nu c_{\alpha, \gamma}^\lambda,
\end{align}
where $\gamma^\prime$ is the Young diagram dual to $\gamma$ and $c_{\cdot,\cdot}^{\cdot}$ is the Littlewood-Richardson coefficient.

The paper is organized as follows.
In \S \ref{Sec: Yang-Baxter relation planar algebras and spherical categories}, we recall some basic properties of $\sC$ and its type $A$ Hecke subalgebra $H$. We prove in Theorem \ref{Thm: ring iso X} that the Grothendieck ring  $\mathcal{G}$ of $\sC$ is the polynomial ring freely generated by the fundamental representations $\{X_{1^n} : n\in\mathbb{N} \}$, where $1^n$ is the Young diagram with one column and $n$ cells. In particular,  $\mathcal{G}$ is commutative.

In \S \ref{Sec: Fusion Rules of Fundamental Representations for the Generic Case}, 
we compute the fusion rule of $\mathcal{G}$ with respect to the fundamental representations in Theorem \ref{Thm: fusion with column}:
\begin{align}
X_{(1^r)} X_{\mu}&=\sum_{i=0}^{r} \sum_{\nu \in \mu- i} \sum_{\lambda \in \nu+1^{r-i}}X_{\lambda}.
\end{align}
The multiplicity of $X_{\lambda}$ is the number of ways of constructing $\lambda$ from $\mu$ by removing $i$ cells, no two in the same column, and then adding $r-i$ cells, no two in the same row.
The proof follows from an explicit construction of the basis of $\hom(X_{(1^r)} X_{\mu}, X_{\lambda})$ in $\sC$ through the linear skein theory of the Yang-Baxter relation planar algebra.

In \S \ref{Sec: Characters, Generating Functions and Fusion Rules for the Generic Case}, we compute fusion rules of $\mathcal{G}$. In principle, one can compute the fusion rule of $\mathcal{G}$ recursively using the fusion rule of fundamental representations. However, the complexity grows exponentially w.r.t. the size of the Young diagrams. 
We observe that $\mathcal{G}$ is isomorphic to the ring $\Lambda$ of symmetric polynomial with infinite variables. We establish a ring isomorphism $\Phi: \mathcal{G} \to \Lambda$ in Definition \ref{Def: Iso} and consider $\Phi(X_\lambda)$ as the character of the simple object $X_{\lambda}$ of $\mathcal{G}$.
We prove in Theorem \ref{Thm: Main1} that
\begin{align}\label{Equ: Weyl Character}
\Phi(X_\lambda) & = \sum_{\mu} (-1)^{|\mu|} s_{\lambda/2\mu} \;. 
\end{align}
where $s_{\lambda/2\mu}$ is a skew-Schur polynomial.
Moreover, we compute the generating function of the characters in closed form in Theorem \ref{Thm: Main2},
\begin{align}
\sum_{\lambda} s_\lambda(x) \Phi(X_\lambda)(y) &= \prod_{i_1 \leq i_2}\frac{1}{1 + x_ix_j} \prod_{i, j}\frac{1}{1-x_i y_j}.
\end{align}
Using the generating function, we compute the fusion coefficient in a closed form, namely Equation \eqref{Equ: fusion rule}, in Theorem \ref{Thm: Main3}. 
Our computational tools on the characters and the generating function come from the theory of symmetric functions \cite{Macdonald}, which we recall in \S \ref{Sec: Characters, Generating Functions and Fusion Rules for the Generic Case}.

In this paper, we compute the fusion rule of $\mathscr{C}$ over $\mathbb{C}(q)$.
Unitary fusion categories $\mathscr{C}^{N,k,\ell}$, $N,k,\ell \in \mathbb{N}$, were constructed in \cite{LiuYB} as quotients of $\mathscr{C}$ over $\mathbb{C}$ at $q=e^{\frac{\pi i}{2N+2}}$. In particular, $\mathscr{C}^{N,0,1}$ is an exceptional quantum subgroup of $SU(N)_{N+2}$, conjectured to be isomorphic to the exceptional quantum subgroup constructed by Xu in \cite{Xu98} in 1998 through the $\alpha$-induction of the conformal inclusion $SU(N)_{N+2} \subseteq SU(N(N+1)/2)_1$. Xu asked the question to compute the fusion rules of these exceptional quantum subgroups \cite{Xu}, which we will compute in the future.
From this point of view, $\mathscr{C}$ can be regarded as the parameterization of a family of exceptional quantum subgroups. It was conjecture in \cite{LiuYB} that there is a continuous family of monoidal categories parameterizing the exceptional quantum subgroups from the $\alpha$-induction of any family of conformal inclusions of quantum groups. We believe that our methods in this paper also apply to the other continuous families of monoidal categories, if they exist.

\subsection*{Acknowledgements}
Zhengwei Liu was partially supported by Grant 100301004 from Tsinghua University and by Templeton Religion Trust under the grant TRT 159.
Zhengwei Liu would like to thank Pavel Etingof and Feng Xu for helpful discussions and to thank Arthur Jaffe for the hospitality at Harvard University. 
Christopher Ryba would like to thank Pavel Etingof for useful conversations.

\section{Yang-Baxter relation planar algebras and spherical categories}\label{Sec: Yang-Baxter relation planar algebras and spherical categories}
The first author constructed the following continuous family of Yang-Baxter relation planar algebras $\sC$ in terms of generators and relations in linear skein theory in \cite{LiuYB}.

\begin{definition}\label{Def:Centralizer algebra}
Let $\mathscr{C}_{\bullet}$ be the unshaded planar algebra over $\mathbb{C}(q)$ with circle parameter
$$\raisebox{-.3cm}{
\begin{tikzpicture}
\begin{scope}[xscale=.6,yscale=.6]
\draw[thick] (0,0) arc (0:360:.5);
\end{scope}
\end{tikzpicture}
}=\delta =q+q^{-1} \;, $$
which is generated by
$R=\raisebox{-.3cm}{
\begin{tikzpicture}
\begin{scope}[xscale=.6,yscale=.6]
\draw[thick] (0,0)--(1,1);
\draw[thick] (1,0)--(0,1);
\node at (0,1/2) {\tiny{$R$}};
\end{scope}
\end{tikzpicture}
}$
with the following relations:

\begin{align}\label{YBrelation1}
\raisebox{-.3cm}{
\begin{tikzpicture}
\begin{scope}[xscale=.6,yscale=.6]
\draw[thick] (0,0)--(1,1);
\draw[thick] (1,0)--(0,1);
\node at (0,1/2) {\tiny{$R$}};
\end{scope}
\end{tikzpicture}
}&=i
\raisebox{-.3cm}{
\begin{tikzpicture}
\begin{scope}[xscale=.6,yscale=.6]
\draw[thick] (0,0)--(1,1);
\draw[thick] (1,0)--(0,1);
\node at (1/2,1) {\tiny{$R$}};
\end{scope}
\end{tikzpicture}
}, \\\label{YBrelation2}
\raisebox{-.3cm}{
\begin{tikzpicture}
\begin{scope}[xscale=.6,yscale=.6]
\draw[thick] (0,0)--(2/3,2/3);
\draw[thick] (2/3,2/3) arc (135:-135: 1.414/6);
\draw[thick] (0,1)--(2/3,1/3);
\node at (0,1/2) {\tiny{$R$}};
\end{scope}
\end{tikzpicture}
}&=0, \\\label{YBrelation3}
\raisebox{-.3cm}{
\begin{tikzpicture}
\begin{scope}[xscale=.6,yscale=.4]
\draw[thick] (0,0)--(1,1)--(0,2);
\draw[thick] (1,0)--(0,1)--(1,2);
\node at (0,1/2) {\tiny{$R$}};
\node at (0,1+1/2) {\tiny{$R$}};
\end{scope}
\end{tikzpicture}
}
&= \graba
-\frac{1}{\delta} \grabb
, \\
\label{YBrelation4}
\grazo&=\frac{i}{\delta^2}(\grazf+\grazg+\grazh)-\frac{1}{\delta^2}(\grazi+\grazj+\grazk)+i\grazp.
\end{align}
\end{definition}
The vector space  $\sC_{n}$ consists of linear sums of $R$-labelled planar diagrams with $2n$ boundary points modulo the above relations.
Consider a disc with  $2n$ boundary points numbered by $\{1,2,\ldots, 2n\}$ clockwise.
A pairing $p$ of $\{1,2,\ldots, 2n\}$ is a bijection on $\{1,2,\ldots, 2n\}$, such that $p^2$ is the identity and $p(i) \neq i$, $\forall 1\leq i \leq 2n$.
We call $\{i,p(i)\}$ a pair of the paring $p$. Let $P_{n}$ be the set of pairings of $2n$ boundary points.

We can construct a diagram in the disc which connects the $n$ pairs of boundary points by $n$ strings with a minimal number of crossings.
(The minimal condition is equivalent to that any two strings either intersect at one point transversally or do not intersect.)
Such diagrams have been used by Brauer to construct the Brauer algebras.
We label each crossing of the diagram by the generator $R$, then we obtain an element in $\sC_{n}$, denoted by $\hat{p}$. Note that there are four choices to label $R$ at each crossing, and the corresponding elements in $\sC_{n}$ differ by a phase due to Relation \eqref{YBrelation1}. We fix a choice at the beginning to define $\hat{p}$.

\begin{proposition}\label{Prop: Bp Basis}
The set $\mathcal{B}_n=\{\hat{p} : p\in P_{n}\}$ is a basis of the vector space $\sC_{n}$ over $\mathbb{C}(q)$.
\end{proposition}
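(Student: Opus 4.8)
The plan is to prove the two halves of the statement separately: that $\mathcal{B}_n$ spans $\sC_n$, so that $\dim_{\mathbb{C}(q)}\sC_n\le |P_n|=(2n-1)!!$, and that the $\hat p$ are linearly independent, so that $\dim_{\mathbb{C}(q)}\sC_n\ge |P_n|$.

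\textbf{Spanning.} I would argue by induction on the number of crossings of an $R$-labelled diagram $D$ representing an element of $\sC_n$. First remove all closed components, each contributing a factor $\delta$, and use \eqref{YBrelation1} to move the $R$-label at every crossing into a fixed standard corner; this changes $D$ only by a power of $i$. Now invoke the classical lemma that a system of properly immersed arcs and loops in a disc with transverse double points, if it has more crossings than the minimum in its isotopy class rel the boundary, contains after an isotopy either a trivial loop, an embedded ``monogon'' (an arc crossing itself once and bounding a disc meeting no other strand), or an embedded bigon. Trivial loops are removed as above. A monogon, after \eqref{YBrelation1} and shrinking, is exactly the capped crossing of \eqref{YBrelation2}, so that term vanishes. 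A bigon, after \eqref{YBrelation1}, is locally the stacked $R^2$ of \eqref{YBrelation3}, and applying that relation replaces $D$ by two diagrams each with two fewer crossings; relation \eqref{YBrelation4} is used to slide any interfering strands out of the innermost monogon or bigon region. If instead the underlying curve system already realizes its pairing $p$ with the minimal number of crossings, then $D$ equals $\pm\hat q$ for some minimal diagram $\hat q$ of $p$, and since any two minimal diagrams of $p$ are related by triangle moves, repeated use of \eqref{YBrelation4} rewrites $D$, modulo diagrams with strictly fewer crossings, as a scalar multiple of the chosen representative $\hat p$. In all cases we have reduced to diagrams with fewer crossings, the base case of zero crossings being a non-crossing pairing; hence $\mathcal{B}_n$ spans $\sC_n$.

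\textbf{Linear independence.} I would use the pairing $\langle\,\cdot\,,\,\cdot\,\rangle\colon\sC_n\times\sC_n\to\sC_0\cong\mathbb{C}(q)$ obtained by reflecting the second diagram and gluing it to the first along $n$ of the $2n$ boundary points, evaluating the resulting closed diagram by the relations. It suffices to show that the Gram matrix $\big(\langle\hat p,\hat{p'}\rangle\big)_{p,p'\in P_n}$ is invertible over $\mathbb{C}(q)$. Stratify $P_n$ by the number $k$ of ``through pairs'' joining $\{1,\dots,n\}$ to $\{n{+}1,\dots,2n\}$. Gluing can only preserve or decrease $k$ (a turnback meeting a through-strand lowers it), so in a suitable ordering of $P_n$ the Gram matrix is block upper-triangular, and it is enough that each diagonal block be non-degenerate. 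The block for $k=n$ is the Gram matrix of the image in $\sC_n$ of the type-$A$ Hecke algebra $H$ (the diagrams with no turnbacks), which is non-degenerate at generic $q$ by the corresponding statement for $H$; the block for general $k$ reduces, after factoring out the fixed cup/cap pattern, to a Hecke-type Gram matrix as well. Thus $\det\big(\langle\hat p,\hat{p'}\rangle\big)$ is a nonzero element of $\mathbb{C}(q)$, the $\hat p$ are linearly independent, and together with spanning, $\mathcal{B}_n$ is a basis of $\sC_n$.

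\textbf{Main obstacle.} The spanning half is essentially routine once the combinatorial lemma on immersed curves is in hand, since relations \eqref{YBrelation2}--\eqref{YBrelation4} each visibly reduce the crossing count and \eqref{YBrelation1} only rotates $R$-labels. The substantive work is the non-degeneracy of the Gram matrix: making the block-triangular (``cellular'') structure precise and identifying the diagonal blocks with non-degenerate Hecke-algebra pairings, which amounts to controlling exactly how turnbacks interact with crossings under the defining relations. An alternative to that step, if one prefers to avoid the cellular analysis, is to exhibit directly a faithful representation of $\sC_n$ of dimension $(2n-1)!!$ — for instance a graph-planar-algebra model — and verify that it separates the elements $\hat p$.
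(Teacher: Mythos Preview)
Your spanning argument is in substance the paper's argument, only with more detail: the paper disposes of spanning in one clause (``Applying the Yang-Baxter relation, any element in $\sC_n$ is a linear sum of such $\hat p$'s''), while you spell out the monogon/bigon/triangle reductions that make this work.

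For linear independence the paper does something much shorter and quite different from your proposal: it simply cites $\dim\sC_n=(2n-1)!!$ from Corollary~6.6 of \cite{LiuYB} and observes $|P_n|=(2n-1)!!$, so a spanning set of the correct cardinality is automatically a basis. There is no Gram-matrix or cellular analysis in the paper's proof at all.

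Your direct Gram-matrix approach, as written, has a genuine gap. The claim that the Markov-trace Gram matrix $\big(\langle\hat p,\hat{p'}\rangle\big)_{p,p'\in P_n}$ becomes block upper-triangular once $P_n$ is stratified by the number $k$ of through-pairs is not correct: already for $n=2$, pairing the cup--cap diagram ($k=0$) against the identity ($k=2$) closes to a single loop and gives $\delta\neq 0$, so the off-diagonal block is nonzero. What \emph{is} filtered by $k$ is the two-sided ideal structure (multiplying two elements cannot increase the number of through-strands), and this is the cellular filtration you are gesturing at; but turning that into non-degeneracy of the trace form requires passing to the Gram forms on the individual cell modules, identifying those with (twisted) Hecke forms, and proving each is non-degenerate over $\mathbb C(q)$. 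That is a substantially longer argument, and your sketch does not carry it out. Your closing alternative---exhibiting a faithful model of dimension $(2n-1)!!$---is in fact closer in spirit to how the dimension count in \cite{LiuYB} is obtained.
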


\begin{proof}
Applying the Yang-Baxter relation, any element in $\sC_{n}$ is a linear sum of such $\hat{p}$'s. On the other hand, $\dim\sC_n=(2n-1)!!$ by Corollary 6.6 in \cite{LiuYB}, and $\#\{\hat{p} : p\in P_{n}\}=(2n-1)!!$. Therefore,
$\{\hat{p} : p\in P_{n}\}$ is a basis of $\sC_{n}$.
\end{proof}

\begin{remark}
Note that $\grazo$ and $\grazp$ correspond to the same pairing. When we define the element $\hat{p}$, we fix a choice. Either $\grazo$ or $\grazp$ with the 14 lower terms
\begin{align*}
& \graza,\grazb,\grazc,\grazd,\graze, \\
& \grazf,\grazg,\grazh,\grazi,\grazj,\grazk, \\
& \grazl,\grazm,\grazn,
\end{align*}
form a basis of $\sC_{3}$.
\end{remark}

A planar algebra canonically has a vertical multiplication, a horizontal tensor product and a Markov trace, see \cite{Jon99}. 
For a diagram in $\sC_{n}$, we draw the first $n$ boundary points on the top and the last $n$ boundary points at the bottom. Then $\sC_{n}$ forms an algebra whose multiplication is gluing the diagrams vertically.
The tensor product $\otimes: \sC_n \otimes \sC_m \to \sC_{n+m}$ is a horizontal union of two diagrams. In particular, the algebra $\sC_{n}$ can be embedded in $\sC_{n+1}$ by adding a through string on the right. So $\sC_{\bullet}$ is a filtered algebra.

The planar algebra $\sC_{\bullet}$ has a has a type $A$ Hecke subalgebra $H_{\bullet}$ generated by
$$\alpha=
\frac{q-q^{-1}}{2}\graba+\frac{q-q^{-1}}{2i}\grabb+ \frac{q+q^{-1}}{2} \grabc.$$
The generic type $A$ Hecke algebra has two parameters $q$ and $r$. Here $qr=\sqrt{-1}$. 
The planar algebra has a Markov trace by gluing the top and the bottom from the right. The Markov trace of $\sC_{\bullet}$ extends the Markov trace of the Hecke algebra in \cite{Jon87}.

The idempotent $e=\frac{1}{\delta}\grabb$ in $\sC_2$ is called the Jones idempotent \footnote{It was called the Jones projection is the operator algebraic setting.}.
The two-sided ideal of $\sC_n$ generated by $e$ is denoted by $\sI_n$, called the basic construction ideal. The complement of the maximal idempotent of $\sI_n$ is denoted by $s_n$.
Then, $s_n$ is central in  $\sC_n$ and
\begin{align}
\label{Equ: sn} x s_n&=0, ~\forall~ x \in \sI_n. \\
\label{Equ: sn0} s_n(s_m \otimes s_{n-m}) &=s_n,  ~\forall~ m \leq n.
\end{align}

The following proposition is a consequence of Theorem 6.5 in \cite{LiuYB}:

\begin{proposition}\label{Prop: sn}
For any $n \geq 0$,
\begin{align}
\label{Equ: sn1} H_n &\cong s_nH_n =s_n \sC_n \;, \\
\label{Equ: sn2} \sC_n &=\sI_n \oplus H_n = \sI_n \oplus s_n H_n\;.
\end{align}
\end{proposition}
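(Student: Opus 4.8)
The plan is to isolate a single structural input --- the vector-space direct sum decomposition $\sC_n = \sI_n \oplus H_n$ provided by Theorem 6.5 of \cite{LiuYB}, which is already the first equality of \eqref{Equ: sn2} --- and to deduce all the remaining assertions by formal manipulation of the central idempotent $s_n$. The case $n = 0$ is trivial, since there $\sI_0 = 0$, $s_0 = 1$ and $\sC_0 = H_0 = \mathbb{C}(q)$, so I will assume $n \geq 1$.

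The first step is the Peirce decomposition attached to $s_n$. By construction $s_n$ is a central idempotent, and by \eqref{Equ: sn} it annihilates $\sI_n$; hence for every $x \in \sI_n$ we have $x = x(1-s_n) + x s_n = x(1-s_n)$, so $f_n := 1 - s_n$ is a central idempotent acting as a two-sided identity on $\sI_n$, and $\sI_n = f_n\sC_n = \sC_n f_n$. Therefore $\sC_n = f_n\sC_n \oplus s_n\sC_n = \sI_n \oplus s_n\sC_n$ as a direct sum of two-sided ideals: the sum is direct because $y \in \sI_n \cap s_n\sC_n$ forces $y = s_n y$ (from $y \in s_n\sC_n$ and $s_n^2 = s_n$) and $s_n y = y s_n = 0$ (from $y \in \sI_n$), whence $y = 0$.

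Next I would multiply the decomposition $\sC_n = \sI_n + H_n$ on the left by the central element $s_n$; since $s_n\sI_n = \sI_n s_n = 0$ by \eqref{Equ: sn}, this gives $s_n\sC_n = s_n H_n$, which is the equality $s_n H_n = s_n\sC_n$ in \eqref{Equ: sn1}, and substituting it into the Peirce decomposition of the previous paragraph yields $\sC_n = \sI_n \oplus s_n H_n$, the second equality of \eqref{Equ: sn2}. For the isomorphism $H_n \cong s_n H_n$ I would use the map $\varphi\colon H_n \to s_n H_n$, $h \mapsto s_n h$: it is surjective by definition, it is an algebra homomorphism because $s_n$ is central and idempotent (so $\varphi(h_1)\varphi(h_2) = s_n h_1 s_n h_2 = s_n^2 h_1 h_2 = \varphi(h_1 h_2)$), and its kernel is $H_n \cap (1-s_n)\sC_n = H_n \cap \sI_n$, which is zero precisely because the sum $\sC_n = \sI_n \oplus H_n$ is direct. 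Hence $\varphi$ is an algebra isomorphism, completing \eqref{Equ: sn1}.

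Thus the only non-formal ingredient is the direct sum $\sC_n = \sI_n \oplus H_n$ itself. That $\sI_n + H_n = \sC_n$ is elementary from Proposition \ref{Prop: Bp Basis}: a basis element $\hat p$ whose pairing matches every top point to a bottom point is a diagram in the Hecke subalgebra $H_n$, while if the pairing joins two top points then $\hat p$ can be isotoped to factor through the Jones idempotent $e$ and so lies in $\sI_n$. What genuinely requires \cite{LiuYB} is the directness of the sum, equivalently the dimension count $\dim\sC_n = \dim\sI_n + \dim H_n$ together with $\dim H_n = n!$ --- that is, the linear independence in $\sC_n$ of the $n!$ diagrams coming from permutations. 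I expect this linear independence, rather than the surrounding idempotent bookkeeping, to be the point that Theorem 6.5 of \cite{LiuYB} supplies and the reason the proposition is stated as a consequence of it.
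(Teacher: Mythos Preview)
Your argument is correct and matches the paper's approach: the paper offers no proof beyond the sentence ``The following proposition is a consequence of Theorem~6.5 in \cite{LiuYB}'', so your formal reduction---taking $\sC_n=\sI_n\oplus H_n$ as the single external input and deriving the remaining statements via the Peirce decomposition for the central idempotent $s_n$---is exactly the intended content, spelled out in more detail than the paper gives.

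One small correction to your final paragraph: the claim that a basis diagram $\hat p$ with $p\in T_n$ already lies in $H_n$ is not literally true. Such $\hat p$ is a word in the crossing $R$, and $R$ is only expressible as a linear combination of $\alpha$, the identity, and the Jones idempotent $e\in\sI_2$; hence $\hat p\in H_n+\sI_n$, not $\hat p\in H_n$. This does not harm the spanning conclusion $\sC_n=\sI_n+H_n$, which only needs every $\hat p$ to lie in $\sI_n+H_n$, but it should be stated that way. (The sharper statement---that $\{s_n\hat p:p\in T_n\}$ is a basis of $s_nH_n$---is Proposition~\ref{Prop: IS} in the paper and is derived \emph{from} Proposition~\ref{Prop: sn}, not used to prove it.) Also, for the equality $f_n\sC_n=\sI_n$ you implicitly use $f_n\in\sI_n$; this is immediate from the paper's definition of $s_n$ as the complement of the maximal idempotent of $\sI_n$, but is worth saying once.
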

For each Young diagram $\lambda$, let $|\lambda|=n$ be the number of cells of $\lambda$.
A minimal idempotent $y_{\lambda}$ in $\hom_{\sC}(X^{n}, X^{n})$ was constructed in Section 2.5 in \cite{LiuYB}. These $y_{\lambda}$'s are representatives of the equivalent minimal idempotents of $H_{n}$.
Furthermore, $\ty_{\lambda}=s_n y_{\lambda}$ are representatives of the equivalent minimal idempotents of $s_nH_{n}$.
We refer the readers to \cite{LiuYB} for the explicit construction of $s_n$, $y_{\lambda}$ and $\ty_{\lambda}$.
Following the construction in Theorem 6.5 in \cite{LiuYB}, we have that
\begin{equation}\label{Equ: y=ty}
y_{(1^n)}=\ty_{(1^n)},
\end{equation}
where $(1^n)$ is the Young diagram with one column and $n$ cells.
The Young diagram with one row and $n$ cells is denoted by $(n)$.
We write them as $1^n$ and $n$ for short, if there is no risk of confusion.

We recall the above properties of $\sC$, which we will apply in this paper. We refer the readers to \cite{LiuYB} for the construction of $s_n$, $y_{\lambda}$ and $\ty_{\lambda}$, which we do not repeat here.

\begin{proposition}\label{Prop: iso}
Note that  $\hom_H  (y_{\mu} \otimes y_{\nu}, y_{\lambda} ) \subseteq H_n \subseteq \sC_n$, when $|\mu|+|\nu|=|\lambda|=n$. We have that
\[
s_n \hom_{H}  (y_{\mu} \otimes y_{\nu}, y_{\lambda} ) =\hom_{\sC}  (\ty_{\mu} \otimes \ty_{\nu}, \ty_{\lambda} ) .
\]
\end{proposition}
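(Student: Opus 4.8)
The plan is to exploit the direct sum decomposition $\sC_n = \sI_n \oplus s_n H_n$ from Proposition \ref{Prop: sn}, together with the fact that $s_n$ is a central idempotent killing $\sI_n$ (Equation \eqref{Equ: sn}). The key point is that multiplication by $s_n$ is precisely the projection of $\sC_n$ onto its summand $s_n H_n \cong H_n$, so questions about $\hom$-spaces in $\sC$ can be translated into questions about $\hom$-spaces in $H$ by applying or removing the factor $s_n$. I would first record that $\ty_\mu \otimes \ty_\nu = s_{|\mu|}y_\mu \otimes s_{|\nu|}y_\nu$, and that by Equation \eqref{Equ: sn0} we have $s_n(s_{|\mu|}\otimes s_{|\nu|}) = s_n$; hence $s_n(\ty_\mu \otimes \ty_\nu) = s_n(y_\mu \otimes y_\nu) = (\ty_\mu\otimes\ty_\nu)s_n$ after using centrality of $s_n$ and the fact that $y_\mu\otimes y_\nu$ commutes with $s_{|\mu|}\otimes s_{|\nu|}$. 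Similarly $s_n \ty_\lambda = \ty_\lambda$ since $\ty_\lambda = s_n y_\lambda$ and $s_n$ is idempotent.

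Next I would unwind the definitions of the two $\hom$-spaces. An element of $\hom_H(y_\mu\otimes y_\nu, y_\lambda)$ is an $x \in H_n$ with $x(y_\mu\otimes y_\nu) = x = y_\lambda x$; an element of $\hom_{\sC}(\ty_\mu\otimes\ty_\nu,\ty_\lambda)$ is an $x\in\sC_n$ with $x(\ty_\mu\otimes\ty_\nu) = x = \ty_\lambda x$. For the inclusion $\subseteq$: given $x\in\hom_H(y_\mu\otimes y_\nu, y_\lambda)$, I claim $s_n x$ lies in the right-hand side. Indeed $s_n x \in s_n H_n = H_n \cap \sC_n$, and using centrality of $s_n$ together with $s_n(\ty_\mu\otimes\ty_\nu) = s_n(y_\mu\otimes y_\nu)$ and $s_n\ty_\lambda = \ty_\lambda$, one computes $(s_n x)(\ty_\mu\otimes\ty_\nu) = s_n x (y_\mu\otimes y_\nu)\cdot(\text{something})$ — more carefully, $(s_n x)(\ty_\mu\otimes\ty_\nu) = x s_n (\ty_\mu\otimes\ty_\nu) = x s_n (y_\mu\otimes y_\nu) = s_n x(y_\mu\otimes y_\nu) = s_n x$, and likewise $\ty_\lambda(s_n x) = s_n y_\lambda s_n x = s_n y_\lambda x = s_n x$. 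So $s_n x \in \hom_{\sC}(\ty_\mu\otimes\ty_\nu,\ty_\lambda)$, giving $s_n\hom_H(y_\mu\otimes y_\nu,y_\lambda) \subseteq \hom_{\sC}(\ty_\mu\otimes\ty_\nu,\ty_\lambda)$.

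For the reverse inclusion, take $x \in \hom_{\sC}(\ty_\mu\otimes\ty_\nu,\ty_\lambda)$. Then $x = \ty_\lambda x = s_n y_\lambda x$, so $x \in s_n\sC_n = s_n H_n \cong H_n$; write $x = s_n x'$ with $x' = y_\lambda x \in H_n$ (or simply note $x\in H_n$ via the isomorphism $H_n \cong s_n H_n$). One then checks $x'$, suitably chosen, lies in $\hom_H(y_\mu\otimes y_\nu, y_\lambda)$: from $x = x(\ty_\mu\otimes\ty_\nu)$ and $s_n(\ty_\mu\otimes\ty_\nu) = s_n(y_\mu\otimes y_\nu)$ one deduces, after stripping the central $s_n$ using the isomorphism $H_n\xrightarrow{\sim} s_nH_n$, that $x'(y_\mu\otimes y_\nu) = x'$ and $y_\lambda x' = x'$ inside $H_n$; and $x = s_n x'$ shows $x \in s_n\hom_H(y_\mu\otimes y_\nu,y_\lambda)$. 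The two inclusions together give the claimed equality.

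The main obstacle is bookkeeping: one must be careful that $y_\mu\otimes y_\nu \in H_{|\mu|}\otimes H_{|\nu|} \subseteq H_n$ really does interact with $s_n$ only through $s_n(s_{|\mu|}\otimes s_{|\nu|}) = s_n$, and that the isomorphism $H_n\cong s_nH_n$ of Equation \eqref{Equ: sn1} is an algebra isomorphism compatible with all the relevant idempotents, so that being a $\hom$-element "upstairs" in $H_n$ is equivalent to being one "downstairs" in $s_nH_n$. Once the identification $x\mapsto s_n x$ is seen to be a bijection $H_n\to s_nH_n$ intertwining left multiplication by $y_\lambda$ with left multiplication by $\ty_\lambda$ and right multiplication by $y_\mu\otimes y_\nu$ with right multiplication by $\ty_\mu\otimes\ty_\nu$ (this last using \eqref{Equ: sn0}), the proposition is immediate.
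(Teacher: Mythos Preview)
Your proposal is correct and follows essentially the same route as the paper: the inclusion $\subseteq$ is handled identically, and for the reverse inclusion the paper writes $x = y + z$ with $y\in\sI_n$, $z\in H_n$ via Equation~\eqref{Equ: sn2} and shows $z\in\hom_H(y_\mu\otimes y_\nu,y_\lambda)$ with $s_n z = x$, which is exactly your $x'$ obtained through the isomorphism $H_n\cong s_nH_n$. The only cosmetic difference is that the paper verifies $y_\lambda z(y_\mu\otimes y_\nu)=z$ directly from the uniqueness of the direct-sum decomposition, whereas you strip off $s_n$ using the injectivity of $h\mapsto s_nh$ on $H_n$; both arguments are equivalent.
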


\begin{proof}
Note that $s_n y_{\mu} \otimes y_{\nu} =\ty_{\mu} \otimes \ty_{\nu}$ and $s_n y_{\lambda}=\ty_{\lambda}$, so
\[
s_n \hom_{H}  (y_{\mu} \otimes y_{\nu}, y_{\lambda} ) \subseteq \hom_{\sC}  (\ty_{\mu} \otimes \ty_{\nu}, \ty_{\lambda} ) .
\]
On the other hand, by Equation \ref{Equ: sn2}, for any element $x$ in $\hom_{\sC}  (\ty_{\mu} \otimes \ty_{\nu}, \ty_{\lambda} ) \subseteq \sC_n$, we have a unique decomposition $x=y+z$, such that $y\in \sI_n$ and $z \in H_n$. Note that
\begin{align*}
y_{\lambda} x (y_{\mu} \otimes y_{\nu}) &=x, \\
y_{\lambda} y (y_{\mu} \otimes y_{\nu})& \in \sI_n ,\\
y_{\lambda} z (y_{\mu} \otimes y_{\nu}) &\in H_n .\\
\end{align*}
So $y_{\lambda} z (y_{\mu} \otimes y_{\nu})=z$, and $z \in \hom_{H}  (y_{\mu} \otimes y_{\nu}, y_{\lambda} )$.
Moreover, $s_n z = s_n x =x$. So
\[
s_n \hom_{H}  (y_{\mu} \otimes y_{\nu}, y_{\lambda} ) = \hom_{\sC}  (\ty_{\mu} \otimes \ty_{\nu}, \ty_{\lambda} ) .
\]
\end{proof}

From the semi-simple, spherical, unshaded planar algebra $\sC$, we obtain an $\mathbb{N}$-graded monoidal category $\sC^{q,0}$ whose degree $n$ objects are idempotents in $\sC_n$.
In subfactor theory, we usually consider the Jones idempotent to be equivalent to the unit in $\sC_0$, namely the empty diagram $\emptyset$. The isometries between the two idempotents are given by the the diagrams $\cap$ and $\cup$. Modulo this relation, we obtain a $\mathbb{Z}_2$-graded spherical category $\sC^{q,1}$, the canonical one associated with the spherical planar algebra $\sC$.

\begin{notation}\label{Not: Y}
Let $\mathcal{G}$ be the Grothendieck ring of $\sC^{q,1}$. It has a basis $X_{\lambda}$ corresponding to the minimal idempotents $\ty_{\lambda}$ of $s_n\sC_n$, $n\in \mathbb{N}$, for all Young diagrams $\lambda$.
\end{notation}

Let $1^r$ be the Young diagram with one column and $r$ cells.
In particular, $X=X_1$, corresponds to the Young diagram with one cell. Then the identity map $1_X$ is a through string, and
\begin{align*}
(\cup \otimes 1_X)(1_X \otimes \cap) &=1_X \; , \\
(1_X \otimes \cup)(\cap \otimes 1_X) &=1_X \; .
\end{align*}
The morphism space $\hom_{\sC}(X^{n}, X^{m})$ consists of linear combinations of $R$-labelled planar diagrams in $\sC$ with $n$ boundary points on the top and $m$ boundary points at the bottom.
For Young diagrams $\mu, \nu, \lambda$, the morphisms of $\sC^{q,1}$ are given by
\[
\hom_{\sC}(X_{\mu} \otimes X_{\nu},  X_{\lambda})= \ty_{\lambda} (\hom_{\sC}(X^{|\mu|+|\nu|}, X^{|\lambda|} )  (\ty_{\mu} \otimes \ty_{\nu}) \;.
\]

\begin{notation}\label{Not: Y}
Let $Y_{\lambda}$ be the element of $\mathcal{G}$ corresponding to the idempotent $y_{\lambda}$.
Note that $\ty_{\lambda}=s_{|\lambda|} y_{\lambda}$, so $y_{\lambda}-\ty_{\lambda}$ is an idempotent in $\sI_{|\lambda|}$. Therefore,
\begin{align}\label{Equ: Y=X+}
Y_{\lambda}=X_{\lambda}+\sum_{|\mu| < |\lambda|} n_{\lambda, \mu} X_{\mu}, \text{ for some } n_{\lambda, \mu} \in \mathbb{N}.
\end{align}
We call $n_{\lambda, \mu}$ the {\bf extended constants}.
Then we can solve for the $ X_{\lambda}$ recursively in terms of the $Y_{\lambda}$, and
\begin{align}\label{Equ: X=Y+}
X_{\lambda}=Y_{\lambda}+\sum_{|\mu| < |\lambda|} z_{\lambda, \mu} Y_{\mu}, \text{ for some } z_{\lambda, \mu} \in \mathbb{Z}.
\end{align}
We call $z_{\lambda, \mu}$ the {\bf inverse extended constants}.
By Equation \eqref{Equ: y=ty}, for any $n\geq 0$,
\begin{align}\label{Equ: Yn=Xn}
X_{1^n}=Y_{1^n}.
\end{align}
\end{notation}

\begin{theorem}\label{Thm: ring iso X}
The Grothendieck ring  $\mathcal{G}$ of $\sC$ is the polynomial ring in the generators $\{ X_{1^n}: n > 0 \}$. In particular,  $\mathcal{G}$ is commutative.
\end{theorem}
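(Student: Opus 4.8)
The plan is to exhibit $\mathcal{G}$ as a polynomial ring by combining the representation theory of the Hecke algebras $H_n$ with the structural decomposition $\sC_n = \sI_n \oplus s_n H_n$ from Proposition~\ref{Prop: sn}. The starting observation is that the subspace of $\mathcal{G}$ spanned by the $Y_\lambda$ (equivalently, by the $X_\lambda$, in view of the triangular change of basis in Equations~\eqref{Equ: Y=X+} and~\eqref{Equ: X=Y+}) should be identified, additively, with the ring $\bigoplus_n R(H_n)$ of representations of the tower of Hecke algebras, which is the same as the ring of symmetric functions $\Lambda$: the $Y_\lambda$ are the classes of the minimal idempotents $y_\lambda$, labelled by all Young diagrams, so they form a $\mathbb{Z}$-basis. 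Under this identification $Y_{1^n}$ corresponds to the elementary symmetric function $e_n$, and the classical fact that $\Lambda = \mathbb{Z}[e_1, e_2, \ldots]$ is a free polynomial ring would give the result — provided the multiplication on the $Y$-side matches. So the real content is to pin down how the tensor product in $\sC^{q,1}$ interacts with the $y_\lambda$ versus the $\ty_\lambda$.

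The key steps, in order, are: (1) Show that $\{Y_\lambda : \lambda \text{ a Young diagram}\}$ is a $\mathbb{Z}$-basis of $\mathcal{G}$; this is immediate from Notation~\ref{Not: Y} since $\{X_\lambda\}$ is a basis and the transition matrix is unitriangular with respect to $|\lambda|$. (2) Identify the $Y_\lambda$ with symmetric functions: the map sending $y_\lambda \in H_n$ to the Schur function $s_\lambda$ is an additive bijection, and one must check it respects products, i.e.\ that $Y_\mu Y_\nu = \sum_\lambda c^\lambda_{\mu\nu} Y_\lambda$ with $c^\lambda_{\mu\nu}$ the Littlewood–Richardson coefficients. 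For this I would use Proposition~\ref{Prop: iso}: $\hom_{\sC}(\ty_\mu \otimes \ty_\nu, \ty_\lambda) = s_n \hom_H(y_\mu \otimes y_\nu, y_\lambda)$, together with the fact that in the Grothendieck ring the structure constants are dimensions of these hom-spaces, and that $s_n$ acts injectively on $H_n \cong s_n H_n$ by Equation~\eqref{Equ: sn1}. Hence the multiplicities governing $\ty_\mu \otimes \ty_\nu$ decomposed into $\ty_\lambda$'s, when $|\lambda| = |\mu| + |\nu|$, are exactly the Hecke-algebra induction multiplicities, namely the Littlewood–Richardson numbers. (3) Conclude that the subring of $\mathcal{G}$ generated by the $X_{1^n} = Y_{1^n}$ is, via the isomorphism with $\Lambda$, all of $\Lambda = \mathbb{Z}[e_1, e_2, \ldots]$, since $Y_{1^n} \mapsto e_n$; as the $e_n$ are algebraically independent generators of $\Lambda$, the $X_{1^n}$ are algebraically independent and generate $\mathcal{G}$, which is therefore the polynomial ring $\mathbb{Z}[X_1, X_{1^2}, \ldots]$. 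Commutativity is then automatic.

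The main obstacle I anticipate is step (2): one must be careful that the product $Y_\mu Y_\nu$ in $\mathcal{G}$ — which a priori is computed from $\ty_\mu \otimes \ty_\nu$, not $y_\mu \otimes y_\nu$ — really does reproduce the Littlewood–Richardson rule and does not pick up extra lower-order terms from the ideal $\sI_n$. The subtlety is that $y_\mu \otimes y_\nu$ is an idempotent in $\sC_{|\mu|+|\nu|}$ whose $\ty$-reduction is $\ty_\mu \otimes \ty_\nu$, but $y_\mu \otimes y_\nu$ may also contain summands supported on $\sI_n$, i.e.\ summands $\ty_\kappa$ with $|\kappa| < |\mu| + |\nu|$; this is precisely the source of the extended constants $n_{\lambda,\mu}$. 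One clean way around this is to observe that the top-degree part of $Y_\mu Y_\nu$ is governed by $\hom_H(y_\mu \otimes y_\nu, y_\lambda)$ (pure Hecke algebra, giving LR coefficients), while the lower-degree corrections are absorbed consistently by the unitriangular change of basis between $X$ and $Y$; since both $\{X_\lambda\}$ and the symmetric-function side are filtered by $|\lambda|$ and the leading terms match, the two rings are isomorphic. Alternatively, and perhaps more efficiently, one can bypass the precise product structure on all $Y_\lambda$ and argue directly: by Theorem~\ref{Thm: fusion with column} (the fusion rule $X_{1^r} X_\mu = \sum X_\lambda$, which is the Pieri-type rule for $e_r$), repeated multiplication of the $X_{1^n}$ generates, with unitriangular transition, every $X_\lambda$, and a dimension/leading-term count shows there are no algebraic relations. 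I would develop whichever of these is shortest once the LR identification in step (2) is nailed down.
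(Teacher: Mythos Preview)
Your proposal follows essentially the same route as the paper: show $\{Y_\lambda\}$ is a basis via the unitriangular change of basis, use that the $Y_\lambda$ span the polynomial ring in the $Y_{1^n}$, and invoke $X_{1^n}=Y_{1^n}$. The paper compresses your step~(2) into the single sentence ``It is known that the set $\{Y_\lambda\}$ is a basis of the polynomial ring in the generators $\{Y_{1^n}:n>0\}$,'' without further justification.

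Two comments on the places where you hesitate. First, your ``main obstacle'' is not one: since $H_m\otimes H_{n-m}\subseteq H_n$ (the Hecke subalgebra is generated by the 2-box $\alpha$ and its shifts, and the planar tensor of two such elements is again a product of shifts of $\alpha$), the idempotent $y_\mu\otimes y_\nu$ lies in $H_{|\mu|+|\nu|}$ itself, not merely in $\sC_{|\mu|+|\nu|}$. Its decomposition into minimal idempotents therefore takes place entirely inside $H_n$ and yields exactly $\sum_\lambda c^\lambda_{\mu\nu}\,[y_\lambda]$ with no contribution from $\sI_n$; there are no lower-degree corrections to worry about, and the filtered/leading-term argument you sketch is unnecessary. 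Second, your alternative route via Theorem~\ref{Thm: fusion with column} should be avoided here: that theorem is proved later in the paper, and in any case the argument you outline (Pieri-type rule plus unitriangularity) is no shorter than simply citing the Hecke-algebra Littlewood--Richardson rule directly.
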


\begin{proof}
Note that $\{X_\lambda\}$ forms a basis of the Grothendieck ring $\mathcal{G}$.
By Equations \eqref{Equ: Y=X+} and \eqref{Equ: X=Y+}, $\{Y_\lambda\}$ also forms a basis of $\mathcal{G}$.
It is known that the set $\{Y_{\lambda} \}$ is a basis of the polynomial ring in the generators $\{Y_{1^n}: n > 0 \}$.
By Equation \eqref{Equ: Yn=Xn}, $\mathcal{G}$ is the polynomial ring in the generators $\{ X_{1^n}: n > 0 \}$.
\end{proof}

Based on the algebraic structure on $\sC_n \cong \hom_{\sC}(X^n,X^n)$,
We decompose the partitions $P_n$ into two subset sets $I_n$ and $T_n$.
A pairing is in $I_n$ if there is a pair among the first $n$ points. On the other hand, a pairing $p$ in $T_n$ pairs the first $n$ points with the last $n$ points.
For any pairing $p \in T_n$, we can identify $p$ with an element $p'$ in the permutation group $S_n$, such that $p'(i)=2n+1-p(i)$, for $1\leq i \leq n$.

\begin{proposition}\label{Prop: IS}
For a pairing $p \in P_n$, we have that $p\in I_n$ iff $\hat{p} \in \sI_n$.
Moreover, $\{s_n \hat{p} : p \in T_n\}$ is a basis of $s_nH_n$.
\end{proposition}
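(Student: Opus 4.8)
The plan is to prove the two assertions separately, though both hinge on understanding how the basic construction ideal $\sI_n$ interacts with the pairing basis $\mathcal{B}_n$ of Proposition \ref{Prop: Bp Basis}. First I would establish the easier direction of the first claim: if $p \in I_n$, then $p$ has a pair $\{i, p(i)\}$ among the first $n$ points (and, since $p$ is a pairing, necessarily also a pair among the last $n$ points). Drawing $\hat{p}$ with the first $n$ points on top and the last $n$ points on the bottom, such a pair gives a cap $\cap$ at the top; factoring this cap out (and the matching cup at the bottom) exhibits $\hat{p}$ as a diagram that factors through $\emptyset$ in the relevant strands, i.e.\ as an element of the two-sided ideal generated by $e = \frac{1}{\delta}\grabb$, so $\hat{p} \in \sI_n$. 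Here one must be slightly careful that the crossings labelled $R$ do not obstruct pulling the cap to the boundary; but by the minimal-crossing convention and the Yang--Baxter relations one may isotope so that the innermost cap at the top has no crossing on it, so this is routine.

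The reverse direction — $\hat{p}\in\sI_n \Rightarrow p\in I_n$ — is the main obstacle, and I would deduce it from a dimension/basis count rather than a direct diagrammatic argument. By Proposition \ref{Prop: sn}, $\sC_n = \sI_n \oplus H_n$ with $H_n \cong s_n\sC_n$, and by standard Hecke algebra theory $\dim H_n = n!$, hence $\dim \sI_n = (2n-1)!! - n!$. On the other hand, $\#I_n = \#P_n - \#T_n = (2n-1)!! - n!$ since $T_n$ is in bijection with $S_n$. The previous paragraph shows $\{\hat p : p \in I_n\}$ is a linearly independent (it is part of the basis $\mathcal{B}_n$) subset of $\sI_n$ of size exactly $\dim\sI_n$, so it is a basis of $\sI_n$. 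Consequently $\sI_n = \operatorname{span}\{\hat p : p\in I_n\}$, and since the $\hat p$ for $p \in I_n$ together with the $\hat p$ for $p\in T_n$ form the basis $\mathcal{B}_n$, any $\hat p$ with $p\in T_n$ is not in $\sI_n$. This gives the "iff''.

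For the second assertion, consider the quotient map $\pi\colon \sC_n \to \sC_n/\sI_n \cong s_n H_n$ (using Equation \eqref{Equ: sn1}), which on diagrams is multiplication by $s_n$. From the splitting $\sC_n = \sI_n \oplus H_n$ and the basis $\mathcal{B}_n = \{\hat p : p\in I_n\} \sqcup \{\hat p : p\in T_n\}$ just analyzed, the images $\{\pi(\hat p) : p\in I_n\}$ are zero by Equation \eqref{Equ: sn}, while $\{\pi(\hat p) = s_n\hat p : p\in T_n\}$ must therefore span $s_n H_n$; and they are linearly independent because a dependence among them would lift to a dependence in $\sC_n$ modulo $\sI_n = \operatorname{span}\{\hat p : p\in I_n\}$, contradicting that $\mathcal{B}_n$ is a basis. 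Since $\#T_n = n! = \dim s_n H_n$, the set $\{s_n\hat p : p\in T_n\}$ is a basis of $s_n H_n$, completing the proof. The only subtlety to check carefully is that $s_n \hat p$ for $p \in T_n$ really is nonzero — but this follows from the independence argument above, or alternatively from the observation that $\hat p$ for $p\in T_n$ is (up to a phase) the braid-type diagram representing the corresponding permutation, whose image in the Hecke algebra is a standard nonzero basis element.
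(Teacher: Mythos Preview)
Your proof is correct and follows essentially the same approach as the paper: the easy direction $p\in I_n\Rightarrow \hat p\in\sI_n$ is diagrammatic, and the rest is a dimension count using $\dim H_n=n!=\#T_n$ (equivalently $\dim\sI_n=(2n-1)!!-n!=\#I_n$) together with the basis $\mathcal{B}_n$ and the decomposition of Proposition~\ref{Prop: sn}. The only cosmetic difference is the order: the paper first shows $\{s_n\hat p:p\in T_n\}$ spans $s_nH_n$, then invokes $\dim s_nH_n=\#T_n$ to get a basis and hence $\hat p\notin\sI_n$ for $p\in T_n$; you instead first show $\{\hat p:p\in I_n\}$ is a basis of $\sI_n$ by the matching count, and then deduce the basis of $s_nH_n$.
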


\begin{proof}
Obviously if $p \in P_n$, then $\hat{p} \in \sI_n$ and $s_n \hat{p}=0$.
By Equation \eqref{Equ: sn2}, $\{s_n \hat{p} : p \in T_n\}$ is a spanning set of $s_n H_n$.
By Equation \eqref{Equ: sn1}, $\dim s_n H_n= \dim H_n = \#T_n$, so $\{s_n \hat{p} : p \in T_n\}$ is a basis of $s_nH_n$, and for any $p \in T_n$, $s_n \hat{p} \neq 0$, namely $\hat{p} \notin \sI_n$.
\end{proof}

We define $T_{i,n-i}$ to be a subset of $P_n$ as follows: $T_{0,n}=T_{n,0}=T_n$, and for any $1\leq i \leq n-1$,
\begin{align*}
T_{i,n-i}&=\{p \in P_n: 2n+1-j  \leq p(j) \leq 2n, ~\forall~ 1\leq j \leq i; n+1  \leq p(j) \leq 2n-i, ~\forall~ i+1 \leq j \leq n\} . \\
\end{align*}
We can consider $T_{i,n-i}$ as $T_i \times T_{n-i}$. Note that $T_{i,n-i} \subseteq T_n$.

\begin{notation}
For any Young diagram $\lambda$, $|\lambda|=n$, we express $\ty_{\lambda}$ in terms of the basis $\mathcal{B}_n$,
\[
\ty_{\lambda}=\sum_{p\in P_n} c_p \hat{p} \;, ~c_p\in \mathbb{C}(q).
\]
We define
\begin{align}\label{Equ: yi}
\ty_{\lambda,i}=\sum_{p\in T_{i,n-i}} c_p \hat{p}.
\end{align}
\end{notation}

\begin{lemma}
For any Young diagram $\lambda$, we have that $\ty_{\lambda,0} \ty_{\lambda}=\ty_{\lambda} \ty_{\lambda,0}=\ty_{\lambda}$.
\end{lemma}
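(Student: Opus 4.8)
The plan is to split $\ty_\lambda$ into its $T_n$-part, which is exactly $\ty_{\lambda,0}$, and its $I_n$-part, which will turn out to lie in the basic construction ideal $\sI_n$, and then to annihilate the latter using the identity $xs_n=0$ for $x\in\sI_n$ from \eqref{Equ: sn}. Throughout, write $n=|\lambda|$ and set $z=\ty_\lambda-\ty_{\lambda,0}$.

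The first step is to identify $z$ concretely. Since $T_{0,n}=T_n$ and $P_n$ is the disjoint union of $I_n$ and $T_n$, the defining formula \eqref{Equ: yi} for $\ty_{\lambda,i}$ gives $\ty_{\lambda,0}=\sum_{p\in T_n}c_p\hat p$, and hence $z=\sum_{p\in I_n}c_p\hat p$. By Proposition \ref{Prop: IS}, $\hat p\in\sI_n$ for every $p\in I_n$, and since $\sI_n$ is a linear subspace of $\sC_n$ this forces $z\in\sI_n$. So $\ty_{\lambda,0}=\ty_\lambda-z$ with $z$ in the ideal $\sI_n$.

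Next I would run the two short computations, using that $\ty_\lambda$ is an idempotent, that $\ty_\lambda=s_ny_\lambda$, that $s_n$ is central in $\sC_n$, and that $zs_n=0$ by \eqref{Equ: sn}. For the left multiplication,
\[
\ty_{\lambda,0}\,\ty_\lambda=(\ty_\lambda-z)\ty_\lambda=\ty_\lambda\ty_\lambda-z\,(s_ny_\lambda)=\ty_\lambda-(zs_n)\,y_\lambda=\ty_\lambda .
\]
For the right multiplication, pushing $s_n$ across $y_\lambda$ by centrality,
\[
\ty_\lambda\,\ty_{\lambda,0}=\ty_\lambda(\ty_\lambda-z)=\ty_\lambda\ty_\lambda-(s_ny_\lambda)\,z=\ty_\lambda-y_\lambda\,(s_nz)=\ty_\lambda-y_\lambda\,(zs_n)=\ty_\lambda .
\]

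The only step carrying real content is the first one: recognizing that everything in $\ty_\lambda$ outside the permutation block $T_n$ already lies in $\sI_n$. This is precisely what Proposition \ref{Prop: IS} supplies; once it is available, the two identities follow immediately from $zs_n=0$ and the centrality of $s_n$, so I do not anticipate a genuine obstacle. (One could equivalently note that $\ty_{\lambda,0}$ and $\ty_\lambda$ differ by an element killed by $s_n$, hence $s_n\ty_{\lambda,0}=s_n\ty_\lambda=\ty_\lambda$, but the direct computation above is cleaner and avoids invoking idempotency of $s_n$.)
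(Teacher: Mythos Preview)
Your proof is correct and follows essentially the same approach as the paper: both arguments identify the difference $\ty_\lambda-\ty_{\lambda,0}=\sum_{p\in I_n}c_p\hat p$ as lying in $\sI_n$ (via Proposition~\ref{Prop: IS}) and then kill it with $s_n$ using~\eqref{Equ: sn}. The paper phrases this as inserting $s_n$ between the two factors and extending the sum from $T_n$ back to $P_n$, which is exactly the variant you mention in your parenthetical remark.
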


\begin{proof}
Note that $s_n \hat{p}=0$, $\forall p \in I_n$, and $\ty_{\lambda}=s_n\ty_{\lambda}$. So
\begin{align*}
\ty_{\lambda} \ty_{\lambda,0}  &=\ty_{\lambda} s_n \sum_{p\in T_{n}} c_p \hat{p}
=\ty_{\lambda} s_n \sum_{p\in P_{n}} c_p \hat{p}
=\ty_{\lambda} s_n \ty_{\lambda}
=\ty_{\lambda} .
\end{align*}
Similarly, $\ty_{\lambda,0} \ty_{\lambda}=\ty_{\lambda}$.
\end{proof}

\begin{lemma}\label{Lem: non-zero}
For any $0\leq i \leq n$, we have that
\[
\ty_{1^n,i}  (\ty_{1^i}\otimes \ty_{1^{n-i}})= c \ty_{1^i}\otimes \ty_{1^{n-i}},
\]
for some $c\neq 0$ in $\mathbb{C}(q)$, and $\displaystyle \lim_{q \to 1} c= {n\choose i}^{-1}$.
\end{lemma}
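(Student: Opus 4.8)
The plan is to understand $\ty_{1^n,i}$ as the ``diagonal block'' of $\ty_{1^n}$ indexed by $T_{i,n-i}\subseteq T_n$, and to exploit the fact that $\ty_{1^n} = y_{1^n}$ is the antisymmetrizer of the Hecke algebra $H_n$, composed with the central idempotent $s_n$. First I would use Proposition~\ref{Prop: sn} and Proposition~\ref{Prop: IS} to rewrite everything inside $s_n H_n \cong H_n$: since $\ty_{1^n} = s_n y_{1^n}$ and the basis elements $\hat{p}$ with $p\in I_n$ are killed by $s_n$, the terms of $\ty_{1^n}$ supported on $T_{i,n-i}$ are exactly the part of $y_{1^n}$ that, after multiplying by $s_n$, lies in the subalgebra $s_n(H_i\otimes H_{n-i})$. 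Concretely, using that $H_i\otimes H_{n-i}\subseteq H_n$ and that $y_{1^i}\otimes y_{1^{n-i}}$ is the antisymmetrizer of that parabolic subalgebra, one has the classical identity
\begin{align*}
(y_{1^i}\otimes y_{1^{n-i}})\, y_{1^n}\, (y_{1^i}\otimes y_{1^{n-i}}) &= c'\, (y_{1^i}\otimes y_{1^{n-i}})
\end{align*}
for a scalar $c'$ equal to the ratio of the ``$q$-dimensions'' (principal specializations of Schur functions / products of Hecke $q$-integers) attached to $1^n$ and to $1^i, 1^{n-i}$; at $q=1$ this ratio is exactly the index $[S_n : S_i\times S_{n-i}]^{-1} = \binom{n}{i}^{-1}$, since the antisymmetrizer of $S_n$ averaged over coset representatives picks up a factor $1/\binom{n}{i}$ relative to the parabolic antisymmetrizer.

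Next I would connect this parabolic identity to the statement about $\ty_{1^n,i}$. The key observation is that $\ty_{1^i}\otimes\ty_{1^{n-i}}$, expanded in the basis $\mathcal{B}_i\otimes\mathcal{B}_{n-i}$, is supported precisely on $T_{i,n-i}$ (as pairings of the $2n$ points), so left- and right-multiplication of $\ty_{1^n}$ by $\ty_{1^i}\otimes\ty_{1^{n-i}}$ has the effect of projecting onto the $T_{i,n-i}$-block: that is, $(\ty_{1^i}\otimes\ty_{1^{n-i}})\,\ty_{1^n}\,(\ty_{1^i}\otimes\ty_{1^{n-i}})$ and $\ty_{1^n,i}\,(\ty_{1^i}\otimes\ty_{1^{n-i}})$ differ only by the idempotent $\ty_{1^i}\otimes\ty_{1^{n-i}}$ acting on the left, which by the lemma preceding this one ($\ty_{\lambda,0}\ty_\lambda = \ty_\lambda$, applied on each tensor factor) is absorbed. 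Combining with the parabolic identity above gives $\ty_{1^n,i}(\ty_{1^i}\otimes\ty_{1^{n-i}}) = c\,\ty_{1^i}\otimes\ty_{1^{n-i}}$ with $c = c'$, hence $c\neq 0$ (the relevant $q$-integers are nonzero in $\mathbb{C}(q)$) and $\lim_{q\to1} c = \binom{n}{i}^{-1}$.

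The main obstacle I anticipate is the bookkeeping that identifies the coefficient extraction ``restrict the expansion of $\ty_{1^n}$ to $T_{i,n-i}$'' with the algebraic operation ``sandwich by $\ty_{1^i}\otimes\ty_{1^{n-i}}$.'' This requires knowing that the only basis elements $\hat{p}$ surviving the sandwich are those in $T_{i,n-i}$, and that no cancellation or rescaling occurs among them — essentially that $\hat{p}\mapsto (\ty_{1^i}\otimes\ty_{1^{n-i}})\hat{p}(\ty_{1^i}\otimes\ty_{1^{n-i}})$ acts ``triangularly'' with respect to the decomposition $P_n = I_n\sqcup T_n$ and the finer decomposition of $T_n$. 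One clean way around this is to work entirely inside $s_n(H_i\otimes H_{n-i})$ from the start, compute the scalar there via the known formula for the action of an antisymmetrizer on a parabolic antisymmetrizer in the Hecke algebra (e.g. via the Markov trace: $\tau(\ty_{1^n}) / \tau(\ty_{1^i}\otimes\ty_{1^{n-i}})$-type ratios of $q$-dimensions), and only at the end translate back to the diagrammatic $\ty_{1^n,i}$. The $q\to1$ limit is then a routine specialization: the Hecke $q$-dimension of $1^n$ is $\prod_{j=1}^{n}[j]/\text{(hook products)}$, which at $q=1$ degenerates to the ordinary dimension count giving $\binom{n}{i}^{-1}$.
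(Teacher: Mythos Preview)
There is a genuine gap. Your central ``classical identity''
\[
(y_{1^i}\otimes y_{1^{n-i}})\, y_{1^n}\, (y_{1^i}\otimes y_{1^{n-i}}) \;=\; c'\,(y_{1^i}\otimes y_{1^{n-i}})
\]
is false. Because $y_{1^n}$ is a \emph{sub}-idempotent of the parabolic antisymmetrizer (indeed $y_{1^n}(y_{1^i}\otimes y_{1^{n-i}}) = y_{1^n}$), the sandwich simply returns $y_{1^n}$, not a scalar multiple of $y_{1^i}\otimes y_{1^{n-i}}$. Consequently your second step, identifying ``sandwiching by $\ty_{1^i}\otimes\ty_{1^{n-i}}$'' with ``restricting the $\mathcal{B}_n$-expansion of $\ty_{1^n}$ to $T_{i,n-i}$,'' also fails: right-multiplying $\ty_{1^n}$ by $\ty_{1^i}\otimes\ty_{1^{n-i}}$ does nothing at all, so it certainly does not project onto the $T_{i,n-i}$-block. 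The obstacle you flag in your third paragraph is real and your proposed workaround does not resolve it; in particular, the elements $\hat{p}$ for $p\in T_{i,n-i}$ are built from $R$, not from the Hecke generator $\alpha$, so they do not lie in $H_i\otimes H_{n-i}$ and the problem cannot be moved entirely inside $s_n(H_i\otimes H_{n-i})$.

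The paper's argument avoids this by working in $\sC_i\otimes\sC_{n-i}$ rather than $H_i\otimes H_{n-i}$. For $p\in T_{i,n-i}$ one has $\hat{p}\in\sC_i\otimes\sC_{n-i}$, and since $\ty_{1^k}$ is a \emph{minimal central} idempotent in the full algebra $\sC_k$ (Proposition~\ref{Prop: sn}), each $\hat{p}$ acts on $\ty_{1^i}\otimes\ty_{1^{n-i}}$ as a scalar; using $R\,\ty_{1^2}=-\ty_{1^2}$ that scalar is $(-1)^{|p|}$. This already gives the existence of $c$ and the formula $c=\sum_{p\in T_{i,n-i}}(-1)^{|p|}c_p$. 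For nonvanishing, the paper specializes: the $T_n$-coefficients $c_p$ of $\ty_{1^n}$ agree with those of $y_{1^n}$ (Proposition~\ref{Prop: IS}), and at $q=1$ the latter are $(-1)^{|p|}/n!$ since $n!\,y_{1^n}$ becomes the signed sum over $S_n$ and $R-\alpha\to 0$. Hence $\lim_{q\to 1}c=\#T_{i,n-i}/n!=i!(n-i)!/n!=\binom{n}{i}^{-1}$, so $c\neq 0$ in $\mathbb{C}(q)$. Your intuition that the $q=1$ answer comes from counting $S_i\times S_{n-i}$ inside $S_n$ is exactly right; what was missing is that the relevant algebraic home for $\hat{p}$ is $\sC_i\otimes\sC_{n-i}$, and the relevant operation is eigenvalue computation on a minimal central idempotent, not a parabolic sandwich.
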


\begin{proof}
By Proposition \ref{Prop: sn}, $ \ty_{1^k}$ is a minimal central projection in $\sC_k$, so for any $p \in T_{i,n-i}$,
$\hat{p}  (\ty_{1^i} \otimes \ty_{1^{n-i}})$ is a multiple of $\ty_{1^i}\otimes \ty_{1^{n-i}}$. Therefore,
\[
\ty_{1^n,i}  (\ty_{1^i} \otimes \ty_{1^{n-i}})= c \ty_{1^i}\otimes \ty_{1^{n-i}},
\]
for some $c \in \mathbb{C}(q)$. Moreover,
$\ty_{1^n,i} \ty_{1^n}= c \ty_{1^n}$. We need to show that $c\neq 0$.
For any $p \in T_n$, we consider $p$ as a permutation.
Without loss of generality, we assume that the strings of $\hat{p}$ move vertically and the generator $R$'s of $\hat{p}$ are all labelled on the left side of the crossings. Note that $R \ty_{1^2}=-\ty_{1^2}$, so
\[
\hat{p} \ty_{1^n} = (-1)^{|p|} \ty_{1^n},
\]
where $|p|$ is the number of the crossing $R$'s in $\hat{p}$.
We express $\ty_{1^n}$, $y_{1^n}$ and $y_{1^i} \otimes  y_{1^{n-i}}$ in terms of the basis $\mathcal{B}_n$ as
\begin{align*}
\ty_{1^n}&=\sum_{p\in P_n} c_p \hat{p} \;, \\
y_{1^n}&=\sum_{p\in P_n} c'_p \hat{p} \;.
\end{align*}
Then
\[
c =\sum_{p\in T_{i,n-i}} \frac{(-1)^{|p|}}{n!}  c_p  \;.
\]
Recall that $\ty_{\lambda}=s_n y_{\lambda}$, so by Proposition \ref{Prop: IS},
\[
c_p=c'_p, ~\forall~ p\in T_n.
\]

When $q \to 1$, the Hecke algebra $H$ specializes to the symmetric group algebra; the generator $\alpha$ becomes the symmetric braiding; $\alpha-R \to 0$; and
$n! y_{1^n} $ becomes the alternating sum of permutations of the symmetric groups $S_n$.
So for any  $p\in T_n$,
\begin{align}
\lim_{q \to 1} c'_p = \frac{(-1)^{|p|}}{n!} .
\end{align}
Then
\begin{align}
\lim_{q \to 1} c =\lim_{q \to 1} \sum_{p\in T_{i,n-1}} \frac{(-1)^{|p|}}{n!}  c_p = \sum_{p\in T_{i,n-i}} \frac{1}{n!}={n\choose i}^{-1}.
\end{align}
Therefore, $C \neq 0$ in $\mathbb{C}(q)$.
\end{proof}

\section{Fusion Rules of Fundamental Representations for the Generic Case}\label{Sec: Fusion Rules of Fundamental Representations for the Generic Case}
In this section, we compute the fusion rule for $X_{1^n} \otimes $ in the Grothendieck ring $\mathcal{G}$, and construct a basis for the hom space. We apply this to study the characters of the simple objects using symmetric functions. Recall that $\{X_\lambda\}$ indexed by Young diagrams are a basis of $\mathcal{G}$. The {\bf structure constants} of the multiplication $R_{\mu, \nu}^{\lambda}$ are defined by:
\[
X_\mu X_\nu = \sum_\lambda R_{\mu, \nu}^\lambda X_\lambda
\]

\begin{notation}
We define the morphism $\cup_n$ in $\hom_{\sC}(X^{2n},\emptyset)$ as
\[
\cup_n=
\raisebox{-.5cm}{
\begin{tikzpicture}
\draw[thick] (4,2) arc (-180:0:1);
\node at (3.8,1.8) {n};
\end{tikzpicture}}
=
\raisebox{-.5cm}{
\begin{tikzpicture}
\draw (4,2) arc (-180:0:.5  and .25);
\draw (3,2) arc (-180:0:1.5 and .75);
\node at (3.5,1.8) {$\cdots$};
\node at (3,2.2) {1};
\node at (4,2.2) {n};
\node at (5,2.2) {n+1};
\node at (6,2.2) {2n};
\end{tikzpicture}}
\]
where the label $n$ in the first picture indicates the number of parallel strings.
\end{notation}
By Proposition 9.2 in \cite{LiuYB}, the dual object (or the $180^{\circ}$ rotation) of $\ty_{1^n}$ is $\ty_{n}$.
In particular, $\cup_n(\ty_{1^n} \otimes \ty_{n} )$ is a non-zero morphism in $\hom_{\sC}(\ty_{n} \otimes \ty_{1^n},\emptyset)$.

\begin{proposition}\label{Prop: dual object}
The dual object of $\ty_{\lambda}$ is $\ty_{\lambda'}$, where $\lambda'$ is the reflection of $\lambda$ in the diagonal, called the Young diagram dual to $\lambda$.
\end{proposition}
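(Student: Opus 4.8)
The plan is to realise the duality (i.e.\ the $180^\circ$ rotation) as a ring involution of $\mathcal{G}$, to read off its effect on the fundamental classes from Proposition~9.2 of~\cite{LiuYB}, and then to pin it down on the whole basis $\{X_\lambda\}$ by sandwiching each $\ty_\lambda$ between a ``column'' resolution and a ``row'' resolution built out of the fundamental idempotents $\ty_{1^a}$ and $\ty_a$. First I would record the formal properties of the dual $D$. Since $X$ is self-dual (via the $\cup$, $\cap$ of the planar algebra), taking duals is a monoidal involution of $\sC^{q,1}$, and since $\mathcal{G}$ is commutative (Theorem~\ref{Thm: ring iso X}) it induces a ring involution $D_\ast$ of $\mathcal{G}$. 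The Jones idempotent is fixed by $D$, so $D$ preserves the basic construction ideal $\sI_N$ for every $N$; as $\sC_N=\sI_N\oplus H_N$ (Proposition~\ref{Prop: sn}) and the generic Hecke algebra $H_N$ is semisimple with simple modules labelled by partitions of $N$, the simple objects of $\sC_N$ not lying in $\sI_N$ are exactly the $X_\nu$ with $|\nu|=N$, and $D$ permutes this set. Hence $D_\ast X_\lambda=X_{\lambda^\ast}$ for an involution $\lambda\mapsto\lambda^\ast$ of Young diagrams with $|\lambda^\ast|=|\lambda|$, and by Proposition~9.2 of~\cite{LiuYB} it already satisfies $(1^n)^\ast=(n)$ and $(n)^\ast=(1^n)$.

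Next I would set up two Jacobi--Trudi type resolutions of $\ty_\lambda$. Running the argument of Proposition~\ref{Prop: iso} with an arbitrary tensor product of fundamental idempotents in place of $y_\mu\otimes y_\nu$ (using $\sC_N=\sI_N\oplus H_N$ and that the central element $s_N$ annihilates $\sI_N$ by~\eqref{Equ: sn}), one finds that for $|\nu|=N:=|\lambda|$ the multiplicity of $X_\nu$ in any such tensor product equals the multiplicity of $y_\nu$ in the corresponding product inside $H_N$, while every remaining constituent has size $<N$. Since the $y_\nu$ fuse by the Littlewood--Richardson rule (the classical input used in the proof of Theorem~\ref{Thm: ring iso X}), writing $\ell=\lambda_1$ and $m=\lambda'_1$, the column product $\ty_{1^{\lambda'_1}}\otimes\cdots\otimes\ty_{1^{\lambda'_\ell}}$ has Grothendieck class $\prod_j Y_{1^{\lambda'_j}}=\sum_\nu K_{\nu'\lambda'}Y_\nu$ and the row product $\ty_{\lambda_1}\otimes\cdots\otimes\ty_{\lambda_m}$ has class $\prod_i Y_{\lambda_i}=\sum_\nu K_{\nu\lambda}Y_\nu$. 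In particular, among constituents of size $N$, $X_\lambda$ occurs with multiplicity one and is the dominance-largest one of the column product and the dominance-smallest one of the row product.

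Finally I would apply $D$ to both resolutions and squeeze. Using $D_\ast X_{1^a}=X_a$, $D_\ast X_a=X_{1^a}$, and commutativity of $\mathcal{G}$: $D$ carries the column product of $\lambda$ to the row product $\ty_{\lambda'_1}\otimes\cdots\otimes\ty_{\lambda'_\ell}$ associated to the partition $\lambda'$, so $X_{\lambda^\ast}=D_\ast X_\lambda$ is one of its size-$N$ constituents; these being the $X_\nu$ with $\nu\trianglerighteq\lambda'$ (Step~2 applied to $\lambda'$), we get $\lambda^\ast\trianglerighteq\lambda'$. Symmetrically, $D$ carries the row product of $\lambda$ to the column product $\ty_{1^{\lambda_1}}\otimes\cdots\otimes\ty_{1^{\lambda_m}}$ associated to $\lambda'$ (whose columns have lengths $\lambda_1\geq\cdots\geq\lambda_m$), whose size-$N$ constituents are the $X_\nu$ with $\nu\trianglelefteq\lambda'$, so $\lambda^\ast\trianglelefteq\lambda'$. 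Hence $\lambda^\ast=\lambda'$, which is the assertion.

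The step I expect to require the most care is the resolution claim in the second paragraph: making precise that the top-degree part of a tensor product of fundamental idempotents of $\sC$ is governed by the Hecke subalgebra—that is, the several-factor analogue of Proposition~\ref{Prop: iso} combined with the Littlewood--Richardson (equivalently, Kostka) combinatorics for the $y_\nu$. Once that is established, the rest is just bookkeeping with the dominance order and with conjugation of Young diagrams.
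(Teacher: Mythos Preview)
Your argument is correct and follows a genuinely different route from the paper's. The paper's quick proof observes that duality induces a $\mathbb{Z}_2$ automorphism of the principal graph of $\sC$, which is Young's lattice; since this automorphism fixes $\emptyset$ and $1$ and swaps $1^2$ with $2$, it must be the transpose---implicitly invoking the fact that transpose is the unique nontrivial automorphism of Young's lattice. (Alternatively the paper simply cites Proposition~9.6 of \cite{LiuYB}.) Your approach instead determines $\lambda^\ast$ for each $\lambda$ individually by a dominance squeeze: the several-factor analogue of Proposition~\ref{Prop: iso} (whose proof indeed carries over verbatim, as you anticipated, using \eqref{Equ: sn0} and \eqref{Equ: sn2}) shows that the size-$|\lambda|$ constituents of a tensor product of fundamental idempotents are governed by the Hecke subalgebra and hence by Kostka numbers; applying $D_\ast$ to the column and row products associated to $\lambda$ then traps $\lambda^\ast$ between $\lambda'$ and $\lambda'$ in the dominance order. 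Your proof is longer but more self-contained: it uses only Proposition~9.2 of \cite{LiuYB} (that $(1^n)^\ast=(n)$), the decomposition $\sC_N=\sI_N\oplus H_N$, and standard Kostka combinatorics, and it avoids both Proposition~9.6 of \cite{LiuYB} and any appeal to the automorphism group of Young's lattice. The paper's argument, by contrast, is shorter and more structural but leans on that rigidity statement as a black box.
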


\begin{proof}
This is a consequence of Proposition 9.6 in \cite{LiuYB}.
We give a quick proof here. The duality map $\lambda \to \lambda'$ is a $\mathbb{Z}_2$ automorphism of the principal graph of the planar algebra $\sC$, which is Young's lattice. This $\mathbb{Z}_2$ fixes the the Young diagrams $\emptyset$ and $1$, and switches $1^2$ and $2$. Therefore it has to be the reflection in the diagonal.
\end{proof}

\begin{notation}
For any Young diagram $\lambda$, we define the following sets of Young diagrams:
\begin{enumerate}
\item
$\lambda-1^n$ are Young diagrams that removes $n$ cells from $\lambda$, and no two cells in the same row;
\item
$\lambda+1^n$ are Young diagrams that adds $n$ cells to $\lambda$, and no two cells in the same row;
\item
$\lambda-n$ are Young diagrams that removes $n$ cells from $\lambda$, and no two cells in the same column;
\item
$\lambda+n$ are Young diagrams that adds $n$ cells to $\lambda$, and no two cells in the same column.
\end{enumerate}
\end{notation}

The following result is well-known for the type $A$ Hecke algebra. It can be derived from the fusion rule of fundamental representations of (quantum) $SU(N)$, as $N \to \infty$. The fusion rule can be characterized by Schur polynomials.
\begin{lemma}\label{Lem: fusion 1}
Suppose $\lambda$ and $\mu$ are Young diagrams. If $n=|\mu|-|\lambda|\geq 0$, then
\begin{align*}
\dim \hom_{H}(y_{\lambda} \otimes y_{1^{n}}, y_{\mu})&=\left\{
\begin{aligned}
1, &  ~\forall~ \mu \in \lambda+1^n; \\
0, & ~\forall~ \mu \notin \lambda+1^n.
\end{aligned}
\right.
\end{align*}
\end{lemma}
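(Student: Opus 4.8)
The plan is to deduce this statement from the classical Pieri rule for the type $A$ Hecke algebra (equivalently, for the representation ring of $GL_n$ or for symmetric functions), making careful note that the objects $y_\lambda$ here are indexed so that $y_{1^n}$ is the ``$n$-th exterior power'' generator (consistent with Proposition~\ref{Prop: dual object}, which has $1^2 \leftrightarrow 2$ under duality). First I would recall that the Hecke algebra $H_\bullet$ is the generic type $A$ Hecke algebra, so its semisimple representation theory is ``the same'' as that of the symmetric group, and the tower $H_n \subseteq H_{n+1} \subseteq \cdots$ has Bratteli diagram equal to Young's lattice. Concretely, $\hom_H(y_\lambda \otimes y_{1}, y_\mu)$ is $1$-dimensional when $\mu$ is obtained from $\lambda$ by adding a single box and $0$ otherwise; this is just the branching rule, which is a standard fact about $H_\bullet$ (and is exactly the statement that the principal graph is Young's lattice, already invoked in the proof of Proposition~\ref{Prop: dual object}).

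The core step is to iterate the branching rule. Tensoring with $y_{1^n}$ is not the same as tensoring $n$ times with $y_1$, but the two are related: $y_1^{\otimes n} \cong \bigoplus_\lambda (\dim V_\lambda)\, y_\lambda$ summed over $\lambda \vdash n$ (where $\dim V_\lambda$ is the dimension of the irreducible $S_n$-representation / the number of standard Young tableaux), and $y_{1^n}$ is the unique summand corresponding to the column partition $1^n$, i.e. the image of the ($q$-deformed) antisymmetrizer. Hence $\hom_H(y_\lambda \otimes y_{1^n}, y_\mu)$ can be computed either as the multiplicity of $y_\mu$ in $y_\lambda \otimes y_{1^n}$, or, passing through characters/symmetric functions, as $\langle s_\lambda \cdot e_n,\, s_\mu\rangle$ where $e_n$ is the $n$-th elementary symmetric function. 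The dual Pieri rule states precisely that $s_\lambda \cdot e_n = \sum_\mu s_\mu$, where the sum is over $\mu$ such that $\mu/\lambda$ is a vertical strip of size $n$ --- that is, $\mu$ is obtained from $\lambda$ by adding $n$ boxes, no two in the same row. In the notation of the excerpt that set is exactly $\lambda + 1^n$, so $\langle s_\lambda e_n, s_\mu\rangle = 1$ if $\mu \in \lambda+1^n$ and $0$ otherwise, which is the claim. I would phrase the argument so that the comparison with symmetric functions is mediated by the standard isomorphism between $\bigoplus_n R(H_n)$ (with induction product) and the ring $\Lambda$ of symmetric functions sending $y_{1^n} \mapsto e_n$ and $y_\lambda \mapsto s_\lambda$; under this isomorphism $\hom_H(y_\lambda \otimes y_{1^n}, y_\mu)$ becomes the Littlewood--Richardson-type coefficient $\langle s_\lambda e_n, s_\mu \rangle$, and the dual Pieri rule finishes the job. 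Alternatively, one can give a purely inductive proof: induct on $n$, using the branching rule to add one box at a time and checking that a box added to $\lambda$ can never land in a row already containing a box added at an earlier stage without producing a non-partition shape --- this reproves the vertical-strip condition by hand.

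The main obstacle --- really the only subtle point --- is bookkeeping of conventions: one must be certain that $y_{1^n}$ in \cite{LiuYB} corresponds to the column partition and thus to $e_n$ (not $h_n$), so that the relevant Pieri rule is the ``elementary symmetric function'' one giving vertical strips rather than horizontal strips. This is pinned down by Proposition~\ref{Prop: dual object} together with the explicit construction of $y_\lambda$ from \cite{LiuYB} (the $y_{1^n}$ are built from antisymmetrizers, as used in Lemma~\ref{Lem: non-zero}, where $R\,\ty_{1^2} = -\ty_{1^2}$), so I would simply cite that. A secondary, entirely routine point is that all of this classical representation theory is valid for the generic Hecke algebra over $\mathbb{C}(q)$ and not merely at $q=1$; this follows because $H_n$ is semisimple over $\mathbb{C}(q)$ with the same simple modules and fusion/branching behaviour as $\mathbb{C}S_n$ --- a standard fact, and one already used implicitly throughout the excerpt. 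With those conventions fixed, the proof is a one-line appeal to the dual Pieri rule.
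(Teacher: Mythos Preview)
Your proposal is correct and in fact goes further than the paper: the paper does not prove this lemma at all, simply stating that it ``is well-known for the type $A$ Hecke algebra'' and ``can be derived from the fusion rule of fundamental representations of (quantum) $SU(N)$, as $N \to \infty$,'' with the remark that ``the fusion rule can be characterized by Schur polynomials.'' Your argument via the isomorphism $\bigoplus_n R(H_n) \cong \Lambda$ sending $y_\lambda \mapsto s_\lambda$, $y_{1^n} \mapsto e_n$, followed by the dual Pieri rule, is precisely the Schur-polynomial derivation the paper is gesturing at, and your care in pinning down the convention $y_{1^n} \leftrightarrow e_n$ (via the antisymmetrizer and the duality $1^2 \leftrightarrow 2$) is exactly the bookkeeping needed to make the one-line citation honest.
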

We give an explicit construction of a non-zero morphism $\rho$ in $\hom_{H}(y_{\lambda} \otimes y_{1^{n}}, y_{\mu})$.

\begin{lemma}\label{Lem: dim 1}
Suppose $\lambda$ and $\mu$ are Young diagrams. If $n=|\mu|-|\lambda|\geq 0$, then
\begin{align*}
\dim \hom_{\sC}(\ty_{\lambda} \otimes \ty_{1^{n}}, \ty_{\mu})&=
\left\{
\begin{aligned}
1, &  ~\forall~ \mu \in \lambda+1^n; \\
0, & ~\forall~ \mu \notin \lambda+1^n.
\end{aligned}
\right. \\
\dim \hom_{\sC}(\ty_{\lambda}  \otimes \ty_{n} , \ty_{\mu})&=
\left\{
\begin{aligned}
1, &  ~\forall~ \mu \in \lambda+n; \\
0, & ~\forall~ \mu \notin \lambda+n;
\end{aligned}
\right.
\end{align*}
If $n=|\lambda|-|\mu|\geq 0$, then
\begin{align*}
\dim \hom_{\sC}(\ty_{\lambda}  \otimes \ty_{1^n} , \ty_{\mu})&=
\left\{
\begin{aligned}
1, &  ~\forall~ \mu \in \lambda-n; \\
0, & ~\forall~ \mu \notin \lambda-n;
\end{aligned}
\right. \\
\dim \hom_{\sC}(\ty_{\lambda}  \otimes \ty_{n} , \ty_{\mu})&=
\left\{
\begin{aligned}
1, &  ~\forall~ \mu \in \lambda-1^n; \\
0, & ~\forall~ \mu \notin \lambda-1^n.
\end{aligned}
\right.
\end{align*}
\end{lemma}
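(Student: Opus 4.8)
The plan is to deduce Lemma \ref{Lem: dim 1} from its Hecke-algebra analogue, Lemma \ref{Lem: fusion 1}, together with Proposition \ref{Prop: iso} (which identifies $\hom_{\sC}(\ty_\mu\otimes\ty_\nu,\ty_\lambda)$ with $s_n\hom_H(y_\mu\otimes y_\nu,y_\lambda)$ when $|\mu|+|\nu|=|\lambda|$) and Proposition \ref{Prop: dual object} (which identifies $\ty_n$ as the dual of $\ty_{1^n}$, so that $\hom_{\sC}(\ty_\lambda\otimes\ty_n,\ty_\mu)\cong\hom_{\sC}(\ty_\lambda\otimes\ty_{\mu'}',\dots)$ after transposing, or more directly relates $\ty_n$-fusion to $\ty_{1^n}$-fusion under the duality $\lambda\mapsto\lambda'$). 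I would treat the four displayed statements in turn, reducing each to the first.

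First I would handle the ``additive'' case $n=|\mu|-|\lambda|\geq 0$ with $\ty_{1^n}$. Here $|\lambda|+|1^n|=|\mu|$, so Proposition \ref{Prop: iso} gives $\hom_{\sC}(\ty_\lambda\otimes\ty_{1^n},\ty_\mu)=s_{|\mu|}\hom_H(y_\lambda\otimes y_{1^n},y_\mu)$. Since $s_{|\mu|}$ acts injectively on $s_{|\mu|}H_{|\mu|}\cong H_{|\mu|}$ by Proposition \ref{Prop: sn} (Equation \eqref{Equ: sn1}), multiplication by $s_{|\mu|}$ does not change the dimension, and the claim follows verbatim from Lemma \ref{Lem: fusion 1}. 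I should double-check that the hom-space inside $H_{|\mu|}$ on which $s_{|\mu|}$ acts is indeed contained in $s_{|\mu|}H_{|\mu|}$ — this is immediate because $\hom_H(y_\lambda\otimes y_{1^n},y_\mu)=y_\mu H_{|\mu|}(y_\lambda\otimes y_{1^n})$ and both $y_\mu$ and $y_\lambda\otimes y_{1^n}$ become $\ty$'s after multiplication by $s_{|\mu|}$, which are nonzero in $s_{|\mu|}H_{|\mu|}$ (Proposition \ref{Prop: IS}); so $s_{|\mu|}$ is injective there. Next, the $\ty_n$ version with $n=|\mu|-|\lambda|\geq 0$: apply the $\mathbb{Z}_2$ duality $\lambda\mapsto\lambda'$, under which $\ty_{1^n}\leftrightarrow\ty_n$ (Proposition \ref{Prop: dual object}), tensor products are preserved up to the duality functor, and the operation $\lambda+1^n$ on the transposed diagrams corresponds to $\lambda+n$ on the originals (adding $n$ cells no two in a row becomes adding $n$ cells no two in a column). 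Hence $\dim\hom_{\sC}(\ty_\lambda\otimes\ty_n,\ty_\mu)=\dim\hom_{\sC}(\ty_{\lambda'}\otimes\ty_{1^n},\ty_{\mu'})$, which is the already-proven case.

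For the two ``subtractive'' statements, where $n=|\lambda|-|\mu|\geq 0$, I would use duality (adjunction) in the spherical category $\sC^{q,1}$: $\hom_{\sC}(\ty_\lambda\otimes\ty_{1^n},\ty_\mu)\cong\hom_{\sC}(\ty_\lambda,\ty_\mu\otimes\overline{\ty_{1^n}})=\hom_{\sC}(\ty_\lambda,\ty_\mu\otimes\ty_n)\cong\hom_{\sC}(\ty_n\otimes\ty_\mu,\ty_\lambda)$, and since $\mathcal{G}$ is commutative (Theorem \ref{Thm: ring iso X}) and dimensions of hom-spaces only see the isomorphism class, this equals $\dim\hom_{\sC}(\ty_\mu\otimes\ty_n,\ty_\lambda)$, which by the just-proven additive $\ty_n$-case is $1$ iff $\lambda\in\mu+n$, i.e.\ iff $\mu\in\lambda-n$, and $0$ otherwise. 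The remaining case $\hom_{\sC}(\ty_\lambda\otimes\ty_n,\ty_\mu)$ with $n=|\lambda|-|\mu|$ follows the same way, reducing to $\dim\hom_{\sC}(\ty_\mu\otimes\ty_{1^n},\ty_\lambda)$, which is $1$ iff $\lambda\in\mu+1^n$, i.e.\ iff $\mu\in\lambda-1^n$. I expect the main obstacle to be purely bookkeeping: making sure the duality/adjunction isomorphisms are applied with the correct variance (left vs.\ right duals in the $\mathbb{Z}_2$-graded spherical setting) and that the combinatorial operations $\pm 1^n,\pm n$ transform correctly under transposition and under ``moving a factor across the $\hom$''. Once the dictionary ``$\ty_{1^n}\leftrightarrow\ty_n$ under transpose, adding $\leftrightarrow$ removing under adjunction'' is set up carefully, each of the four assertions is a one-line consequence of Lemma \ref{Lem: fusion 1} and Propositions \ref{Prop: iso}, \ref{Prop: sn}, \ref{Prop: dual object}.
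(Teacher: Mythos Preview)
Your proposal is correct and follows essentially the same route as the paper: Proposition~\ref{Prop: iso} together with Lemma~\ref{Lem: fusion 1} for the first additive case, a transpose symmetry for the second, and Frobenius reciprocity (using that the dual of $\ty_{1^n}$ is $\ty_n$) for the two subtractive cases. One small point of divergence: for the $\ty_n$ additive case the paper does not invoke Proposition~\ref{Prop: dual object} (the contravariant duality functor) but rather the \emph{covariant} $\mathbb{Z}_2$-automorphism $\Omega: R \mapsto -R$ of $\sC$ (Proposition~9.5 of \cite{LiuYB}), which sends $\ty_\lambda$ to an idempotent equivalent to $\ty_{\lambda'}$ and literally preserves tensor products, giving $\dim\hom_\sC(\ty_\lambda\otimes\ty_n,\ty_\mu)=\dim\hom_\sC(\ty_{\lambda'}\otimes\ty_{1^n},\ty_{\mu'})$ in one step; your duality-functor version works too but reverses the tensor order, so you must then use semisimplicity and commutativity of $\mathcal{G}$ to get back to the first case---exactly the ``bookkeeping'' you anticipated.
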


\begin{proof}
If $n=|\mu|-|\lambda|\geq 0$,
then by Equation \eqref{Equ: sn1}, Proposition \ref{Prop: iso} and Lemma \eqref{Lem: fusion 1}, we have
\begin{align*}
\dim \hom_{\sC}(\ty_{\lambda} \otimes \ty_{1^{n}}, \ty_{\mu})&= \dim \hom_{H}(y_{\lambda} \otimes y_{1^{n}}, y_{\mu}) =
\left\{
\begin{aligned}
1, &  ~\forall~ \mu \in \lambda+1^n; \\
0, & ~\forall~ \mu \notin \lambda+1^n.
\end{aligned}
\right.
\end{align*}

The planar algebra $\sC$ has a $\mathbb{Z}_2$ automorphism $\Omega$ mapping the generator $R$ to $-R$. By Proposition 9.5 in \cite{LiuYB}, the idempotent $\Omega(\ty_{\lambda})$ is equivalent to $\ty_{\lambda^\prime}$. (The dual Young diagram $\lambda'$ is denoted by $\Omega(\lambda)$ in \cite{LiuYB}.)

In particular, $n^\prime=1^n$.
Note that $\mu \in \lambda+n$ iff $\mu^\prime \in \lambda^\prime+1^n$. So
\begin{align*}
\dim \hom_{\sC}(\ty_{\lambda}  \otimes \ty_{n} , \ty_{\mu})&= \dim \hom_{\sC}(\ty_{\lambda^\prime}  \otimes \ty_{1^n} , \ty_{\mu^\prime}) =
\left\{
\begin{aligned}
1, &  ~\forall~ \mu \in \lambda+n; \\
0, & ~\forall~ \mu \notin \lambda+n.
\end{aligned}
\right.
\end{align*}

By Proposition 9.2 in \cite{LiuYB}, the dual object (or $180^{\circ}$ rotation) of $\ty_{1^n}$ is $\ty_{n}$.
If $n=|\lambda|-|\mu|\geq 0$, then by Frobenius reciprocity,
\begin{align*}
\dim \hom_{\sC}(\ty_{\lambda}  \otimes \ty_{1^n} , \ty_{\mu})&= \dim \hom_{\sC}(\ty_{\lambda}, \ty_{\mu} \otimes \ty_{n} )=
\left\{
\begin{aligned}
1, &  ~\forall~ \mu \in \lambda-n; \\
0, & ~\forall~ \mu \notin \lambda-n;
\end{aligned}
\right.
\end{align*}
\begin{align*}
\dim \hom_{\sC}(\ty_{\lambda}  \otimes \ty_{n} , \ty_{\mu})= \dim \hom_{\sC}(\ty_{\lambda}, \ty_{\mu} \otimes \ty_{1^n} )=
\left\{
\begin{aligned}
1, &  ~\forall~ \mu \in \lambda-1^n; \\
0, & ~\forall~ \mu \notin \lambda-1^n.
\end{aligned}
\right.
\end{align*}

\end{proof}

\begin{notation}
Suppose $a,b,c \in \mathbb{N}$, and $n=a+b+c$.
Let $p_{a,b,c}\in P_n$ be the pairing
\begin{align}\label{Equ: p_{a,b,c}}
p_{a,b,c}(k)=\left\{
\begin{aligned}
&(2n+1-k), &&\forall~ 1\leq k \leq a \text{ or } 2n-a< k \leq 2n ; \\
&(2a+2b+1-k), &&\forall~ a < k \leq a+2b; \\
&(2n+2b+1-k), &&\forall~ n+b < k \leq 2n-a; \\
\end{aligned}
\right.
\end{align}
We can identify $\hat{p}_{a,b,c} \in \sC_{n}$ as a morphism in $\hom_{\sC}(X^{a+b} \otimes X^{b+c}, X^{a+c})$, illustrated as
\[
\hat{p}_{a,b,c}=\raisebox{-1cm}{
\begin{tikzpicture}
\draw[thick] (0,0) --++(0,2);
\draw[thick] (1,2) arc (-180:0:.5);
\draw[thick] (1,0) --++(2,2);
\node at (-.2,1) {a};
\node at (1-.2,1.8) {b};
\node at (1.7,1) {c};
\end{tikzpicture}}
=
\raisebox{-1.5cm}{
\begin{tikzpicture}
\draw (1,0) --++(0,2);
\draw (2,0) --++(0,2);
\draw (3.25,0) --++(4,2);
\draw (4.5,0) --++(4,2);
\draw (4,2) arc (-180:0:.5  and .25);
\draw (3,2) arc (-180:0:1.5 and .75);
\node at (1.5,1) {$\cdots$};
\node at (6,1) {$\cdots$};
\node at (3.5,1.8) {$\cdots$};
\node at (1,2.2) {1};
\node at (2,2.2) {a};
\node at (3,2.2) {a+1};
\node at (4,2.2) {a+b};
\node at (5,2.2) {a+b+1};
\node at (6,2.2) {a+2b};
\node at (7.25,2.2) {a+2b+1};
\node at (8.5,2.2) {n+b};
\node at (1,-.2) {2n};
\node at (2,-.2) {2n+1-a};
\node at (3.25,-.2) {2n-a};
\node at (4.5,-.2) {n+b+1};
\end{tikzpicture}}
\]
where $a,b,c$ in the first picture indicate the number of parallel strings.
\end{notation}

\begin{notation}
Suppose $\mu$ is a Young diagram, $|\mu|=a+b$. Take Young diagrams $\nu \in \mu-b$ and $\lambda \in \nu +1^{c}$.
By Lemma \ref{Lem: dim 1}, there are non-zero morphisms $\rho_{1,\nu} \in \hom_{\sC}(\ty_{\mu}, \ty_{\nu} \otimes \ty_{b} )$ and $\rho_{2,\nu} \in \hom_{\sC}(\ty_{\nu} \otimes \ty_{1^{c}}, \ty_{\lambda})$.
We construct a morphism $\rho'_{\mu,\nu,\lambda} \in \hom_{\sC}(\ty_{\mu} \otimes \ty_{1^b} \otimes \ty_{1^{c}}, \ty_{\lambda})$ as
\begin{align}\label{Equ: rho}
\rho'_{\mu,\nu,\lambda}&:=\rho_{2,\nu} \hat{p}_{a,b,c} (\rho_{1,\nu} \otimes \ty_{1^b} \otimes \ty_{1^{c}}) \;.
\end{align}
We identify $\ty_{b+c}$ with a morphism in $\hom_{\sC}(\ty_{1^{b+c}}, \ty_{1^{b}} \otimes \ty_{1^{c}} )$ and construct a morphism $\rho'_{\mu,\nu,\lambda} \in \hom_{\sC}(\ty_{\mu} \otimes \ty_{1^{b+c}}, \ty_{\lambda})$:
\begin{align}\label{Equ: rho'}
\rho_{\mu,\nu,\lambda}&:=\rho'_{\mu,\nu,\lambda}(\ty_{\mu} \otimes \ty_{b+c})  =\rho_{2,\nu} \hat{p}_{a,b,c} (\rho_{1,\nu} \otimes \ty_{b+c}) \;.
\end{align}

Their pictorial representations are
\begin{align*}
\rho'_{\mu,\nu,\lambda}=
\raisebox{-2.5cm}{
\begin{tikzpicture}
\draw[thick] (0,0) --++(0,2);
\draw[thick] (1,2) arc (-180:0:.5);
\draw[thick] (1,0) --++(2,2);
\node at (-.2,1) {$\ty_{\nu}$};
\node at (1-.2,1.8) {$\ty_{b}$};
\node at (1.7,3.2) {$\ty_{1^{b}}$};
\node at (2.7,3.2) {$\ty_{1^{c}}$};
\node at (0,3.2) {$\ty_{\mu}$};
\node at (0,2.5) {$\rho_{1,\nu}$};
\node at (0,-1.2) {$\ty_{\lambda}$};
\node at (0,-.5) {$\rho_{2,\nu}$};
\draw[thick] (2,2) --++(0,1.5);
\draw[thick] (3,2) --++(0,1.5);
\draw[thick] (0,2) --(.5,2.5)--++(0,1);
\draw[thick] (1,2) --(.5,2.5);
\draw[thick] (0,0) --(.5,-.5)--++(0,-1);
\draw[thick] (1,0) --(.5,-.5);
\end{tikzpicture}
}
\quad \text{and} \quad
\rho_{\mu,\nu,\lambda}=
\raisebox{-2.5cm}{
\begin{tikzpicture}
\draw[thick] (0,0) --++(0,2);
\draw[thick] (1,2) arc (-180:0:.5);
\draw[thick] (1,0) --++(2,2);
\node at (-.2,1) {$\ty_{\nu}$};
\node at (1-.2,1.8) {$\ty_{b}$};
\node at (1.7,1.8) {$\ty_{1^b}$};
\node at (1.7,1) {$\ty_{1^c}$};
\node at (2,2.5) {$\ty_{1^{b+c}}$};
\node at (0,3.2) {$\ty_{\mu}$};
\node at (0,2.5) {$\rho_{1,\nu}$};
\node at (0,-1.2) {$\ty_{\lambda}$};
\node at (0,-.5) {$\rho_{2,\nu}$};
\draw[thick] (2,2) --(2.5,2.5)--++(0,1);
\draw[thick] (3,2) --(2.5,2.5);
\draw[thick] (0,2) --(.5,2.5)--++(0,1);
\draw[thick] (1,2) --(.5,2.5);
\draw[thick] (0,0) --(.5,-.5)--++(0,-1);
\draw[thick] (1,0) --(.5,-.5);
\end{tikzpicture}
}
\end{align*}
\end{notation}

\begin{lemma}\label{Lem: Basis 0}
Suppose $a,b,c \in \mathbb{N}$ and $r=b+c$. For any Young diagrams $\mu$ and $\lambda$, $|\mu|=a+b$, $|\lambda|=a+c$,
the elements $\{\rho'_{\mu,\nu, \lambda} : \nu \in \mu-b, \nu\in \lambda -1^{c} \}$ are linearly independent in $\hom_{\sC}(\ty_{\mu} \otimes \ty_{1^b} \otimes \ty_{1^c}, \ty_{\lambda})$.
\end{lemma}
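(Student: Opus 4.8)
The strategy is to separate the parameters $b$ and $c$. The morphisms $\rho'_{\mu,\nu,\lambda}$ factor through objects of two different ``shapes'': the composite $\rho_{2,\nu}$ lands in $\hom_{\sC}(\ty_\nu \otimes \ty_{1^c},\ty_\lambda)$, and by Lemma \ref{Lem: dim 1} this space is one-dimensional and nonzero precisely when $\nu \in \lambda - 1^c$ (equivalently $\lambda \in \nu + 1^c$), while $\rho_{1,\nu}$ lands in the one-dimensional space $\hom_{\sC}(\ty_\mu,\ty_\nu \otimes \ty_b)$, nonzero exactly when $\nu \in \mu - b$. So the indexing set $\{\nu : \nu \in \mu - b,\ \nu \in \lambda - 1^c\}$ is genuine, and each $\rho'_{\mu,\nu,\lambda}$ is nonzero because $\hat p_{a,b,c}$ is a composable diagram of through-strings (it is invertible up to the basic-construction ideal after composing with the relevant idempotents; more precisely, $\rho_{2,\nu}\hat p_{a,b,c}(\rho_{1,\nu}\otimes\cdots)$ equals, up to the nonzero scalars of Lemma \ref{Lem: non-zero}, a nonzero multiple of the identity on the through-part).

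The key idea for linear independence is to post-compose (or pre-compose) with a suitable family of morphisms that ``reads off'' which $\nu$ appears. Concretely: suppose $\sum_\nu d_\nu \rho'_{\mu,\nu,\lambda} = 0$ with $d_\nu \in \mathbb{C}(q)$, the sum over admissible $\nu$. Fix one admissible $\nu_0$. I would construct an auxiliary morphism $\sigma_{\nu_0}\colon \ty_{\nu_0}\otimes\ty_b \to \ty_\mu$ (the ``other direction'', which exists and is nonzero by Frobenius reciprocity and Lemma \ref{Lem: dim 1}) and a morphism $\tau_{\nu_0}\colon \ty_\lambda \to \ty_{\nu_0}\otimes\ty_{1^c}$, and then compute $\tau_{\nu_0}\big(\sum_\nu d_\nu \rho'_{\mu,\nu,\lambda}\big)(\sigma_{\nu_0}\otimes \ty_{1^b}\otimes\ty_{1^c})$. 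Each term becomes a scalar times a composite living in $\ty_{\nu_0}\otimes\ty_b\otimes\ty_{1^b}\otimes\ty_{1^c} \to \ty_{\nu_0}\otimes\ty_{1^c}$; by Schur's lemma (one-dimensionality of the relevant hom-spaces from Lemma \ref{Lem: dim 1}) the $\nu \neq \nu_0$ terms vanish — because $\rho_{2,\nu}$ precomposed with $\tau_{\nu_0}$ factors through $\ty_\nu$ and $\ty_{\nu_0}$ which are inequivalent simple objects for $\nu \neq \nu_0$ — while the $\nu = \nu_0$ term is a nonzero scalar times the identity-type diagram, forcing $d_{\nu_0} = 0$. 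Here the scalars are controlled, and nonzero, by Lemma \ref{Lem: non-zero} together with the minimality/centrality of $\ty_{1^k}$ in $\sC_k$.

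The main obstacle is making the ``read-off'' composition actually isolate $\nu_0$: one must verify that composing the caps/cups implicit in $\hat p_{a,b,c}$ with $\rho_{1,\nu},\rho_{2,\nu},\sigma_{\nu_0},\tau_{\nu_0}$ does not accidentally land in the basic construction ideal $\sI$ (which would kill everything) nor produce extra cross-terms between distinct $\nu$. This is where the structure of $\sC^{q,1}$ as a semisimple spherical category is used: distinct $\ty_\nu$ are inequivalent simple objects, so any morphism $\ty_\nu \to \ty_{\nu_0}$ factoring an intermediate composite is zero unless $\nu = \nu_0$; and the nonvanishing of the surviving scalar is exactly the content of Lemma \ref{Lem: non-zero} (the ${n\choose i}^{-1}$ limit guarantees the scalar is a nonzero element of $\mathbb{C}(q)$, not merely nonzero at $q=1$). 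Once the cross-terms are shown to vanish and the diagonal term to survive with an invertible coefficient, linear independence follows immediately. I would carry out the argument by first recording the factorization properties of $\rho'_{\mu,\nu,\lambda}$ through $\ty_\nu\otimes\ty_b$ and $\ty_\nu\otimes\ty_{1^c}$, then setting up $\sigma_{\nu_0},\tau_{\nu_0}$, then doing the Schur-lemma cancellation, and finally invoking Lemma \ref{Lem: non-zero} for nonvanishing.
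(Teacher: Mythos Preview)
Your overall strategy --- pick an admissible $\nu_0$, pre/post-compose with auxiliary morphisms, and use Schur's lemma to kill the cross-terms --- is exactly the paper's strategy. But the Schur step, as you have set it up, does not go through.

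The problem is where the simples sit. You claim that for $\nu\neq\nu_0$ the composite $\tau_{\nu_0}\rho_{2,\nu}$ vanishes ``because it factors through $\ty_\nu$ and $\ty_{\nu_0}$''. It does not: it is a map $\ty_\nu\otimes\ty_{1^c}\to\ty_\lambda\to\ty_{\nu_0}\otimes\ty_{1^c}$, and the simple object it actually factors through is $\ty_\lambda$. Since both $\rho_{2,\nu}$ and $\tau_{\nu_0}$ are nonzero and $\ty_\lambda$ is simple, that composite is \emph{nonzero}. The same failure occurs on the other side: $\rho_{1,\nu}\sigma_{\nu_0}\colon\ty_{\nu_0}\otimes\ty_b\to\ty_\mu\to\ty_\nu\otimes\ty_b$ is nonzero through the simple $\ty_\mu$, and since $\sigma_{\nu_0}$ is an epimorphism onto the simple $\ty_\mu$, the tensor $\sigma_{\nu_0}\otimes\ty_{1^b}$ is surjective onto $\ty_\mu\otimes\ty_{1^b}$; composing with the (nonzero) Frobenius-bent $\rho_{1,\nu}$ therefore stays nonzero. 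So neither half of your sandwich kills the $\nu\neq\nu_0$ terms.

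The repair is a small but essential change in the shape of the auxiliary morphism. Instead of $\sigma_{\nu_0}\colon\ty_{\nu_0}\otimes\ty_b\to\ty_\mu$, take its Frobenius bend $\rho_{3,\nu_0}\colon\ty_{\nu_0}\to\ty_\mu\otimes\ty_{1^b}$, normalised so that $(\ty_{\nu_0}\otimes\cup_b)(\rho_{1,\nu_0}\otimes\ty_{1^b})\rho_{3,\nu_0}=\ty_{\nu_0}$ (semisimplicity gives such a section). Now pre-compose the dependence relation with $\rho_{3,\nu_0}\otimes\ty_{1^c}$ only. The cap inside $\hat p_{a,b,c}$ then produces, for the $\nu$-term, the composite
\[
\rho_{2,\nu}\Big(\big[(\ty_\nu\otimes\cup_b)(\rho_{1,\nu}\otimes\ty_{1^b})\rho_{3,\nu_0}\big]\otimes\ty_{1^c}\Big),
\]
and the bracketed factor is an honest morphism $\ty_{\nu_0}\to\ty_\nu$ between simples, hence zero for $\nu\neq\nu_0$ and equal to the identity for $\nu=\nu_0$. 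The surviving term is $d_{\nu_0}\rho_{2,\nu_0}$, which forces $d_{\nu_0}=0$ because $\rho_{2,\nu_0}\neq 0$ by construction. No post-composition $\tau_{\nu_0}$ is needed, and Lemma~\ref{Lem: non-zero} plays no role here (it is used later, for $\rho$ rather than $\rho'$).
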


\begin{proof}
By Frobenius reciprocity, for any $\nu$, $(\ty_{\nu} \otimes \cup_{b} ) (\rho_{\mu,\nu} \otimes \ty_{1^b})\neq 0$ in $\hom_{\sC}(\ty_{\mu} \otimes \ty_{1^b}, \ty_{\nu} )$.
As $\sC$ is semi-simple, there is a morphism $\rho_{3,\nu} \in \hom_{\sC}(\ty_{\nu}, \ty_{\mu} \otimes \ty_{1^b})$, such that
\[
(\ty_{\nu} \otimes \cup_{b} ) (\rho_{\mu,\nu} \otimes \ty_{1^b}) \rho_{3,\nu}=\ty_{\nu}.
\]
If
\[
\sum_{\nu \in \mu-b, \nu\in \lambda -1^{c}} c_{\nu} \rho'_{\mu,\nu, \lambda}=0,  ~c_{\nu} \in \mathbb{C}(q),
\]
then for any $\nu' \in \mu-b, \nu'\in \lambda -1^{c}$,
\[
\rho_{3,\nu'} \sum_{\nu \in \mu-b, \nu\in \lambda -1^{c}} \rho'_{\mu,\nu, \lambda}=c_{\nu'} \rho_{2,\nu'}=0.
\]
So $c_{\nu'}=0$. Therefore, $\{\rho'_{\mu,\nu, \lambda} : \nu \in \mu-b, \nu\in \lambda -1^{c} \}$ are linearly independent.

\end{proof}

\begin{lemma}\label{Lem: Basis 1}
Suppose $a,b,c \in \mathbb{N}$ and $r=b+c$. For any Young diagrams $\mu$ and $\lambda$ with $|\mu|=a+b$, $|\lambda|=a+c$,
the morphisms $\{\rho_{\mu,\nu, \lambda} : \nu \in \mu-b, \nu\in \lambda -1^{c} \}$ form a spanning set of  $\hom_{\sC}(\ty_{\mu} \otimes \ty_{1^r}, \ty_{\lambda})$.
\end{lemma}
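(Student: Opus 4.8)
The plan is to show that the spanning set has the right cardinality and that $\hom_{\sC}(\ty_{\mu} \otimes \ty_{1^r}, \ty_{\lambda})$ has dimension at most that number; combined with the linear independence already established in Lemma \ref{Lem: Basis 0}, this forces the $\rho_{\mu,\nu,\lambda}$ to be a basis, hence in particular a spanning set. First I would count: the set $\{\nu : \nu \in \mu - b,\ \nu \in \lambda - 1^c\}$ has size $\sum_{\nu} \dim\hom_{\sC}(\ty_\mu \otimes \ty_{b}, \ty_\nu)\cdot \dim\hom_{\sC}(\ty_\nu \otimes \ty_{1^c},\ty_\lambda)$ by the (multiplicity-one) dimension formulas of Lemma \ref{Lem: dim 1}.

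Next I would compute $\dim\hom_{\sC}(\ty_\mu \otimes \ty_{1^r}, \ty_\lambda)$ by inserting a complete set of idempotents. Writing $\ty_{1^r}$ as an idempotent in $\sC_r$, we have $\ty_{1^r} = \ty_{b+c}\,(\ty_{1^b}\otimes\ty_{1^c})\,\ty_{b+c}^{*}$ up to the isometry identifying $\ty_{1^{b+c}}$ with a summand of $\ty_{1^b}\otimes\ty_{1^c}$; more usefully, the object $X_{1^b}\otimes X_{1^c}$ decomposes in $\mathcal{G}$, and by Frobenius reciprocity and semisimplicity,
\[
\dim\hom_{\sC}(\ty_\mu \otimes \ty_{1^r}, \ty_\lambda) \;\leq\; \dim\hom_{\sC}(\ty_\mu \otimes \ty_{1^b}\otimes\ty_{1^c}, \ty_\lambda) \;=\; \sum_{\xi}\dim\hom_{\sC}(\ty_\mu\otimes\ty_{1^b},\ty_\xi)\,\dim\hom_{\sC}(\ty_\xi\otimes\ty_{1^c},\ty_\lambda),
\]
the sum running over Young diagrams $\xi$. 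But $\ty_{1^b}$ is the dual of $\ty_b$, so $\dim\hom_{\sC}(\ty_\mu\otimes\ty_{1^b},\ty_\xi)=\dim\hom_{\sC}(\ty_\mu,\ty_\xi\otimes\ty_b)=\dim\hom_{\sC}(\ty_\xi\otimes\ty_b,\ty_\mu)$, which by Lemma \ref{Lem: dim 1} is $1$ exactly when $\xi\in\mu-b$ and $0$ otherwise. Likewise $\dim\hom_{\sC}(\ty_\xi\otimes\ty_{1^c},\ty_\lambda)$ is $1$ exactly when $\xi\in\lambda-1^c$. Hence the right-hand side equals the cardinality of $\{\nu:\nu\in\mu-b,\ \nu\in\lambda-1^c\}$ computed in the previous step.

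So it remains to argue that the inequality $\dim\hom_{\sC}(\ty_\mu\otimes\ty_{1^r},\ty_\lambda) \leq \dim\hom_{\sC}(\ty_\mu\otimes\ty_{1^b}\otimes\ty_{1^c},\ty_\lambda)$ is actually realized with the relevant multiplicity — i.e. that the embedding $\ty_{1^r}\hookrightarrow \ty_{1^b}\otimes\ty_{1^c}$ from Equation \eqref{Equ: sn0} together with the explicit morphism $\ty_{b+c}\in\hom_{\sC}(\ty_{1^{b+c}},\ty_{1^b}\otimes\ty_{1^c})$ does not kill the $\rho'_{\mu,\nu,\lambda}$ upon precomposition with $\ty_\mu\otimes\ty_{b+c}$. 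Concretely, since $\rho_{\mu,\nu,\lambda}=\rho'_{\mu,\nu,\lambda}(\ty_\mu\otimes\ty_{b+c})$ by \eqref{Equ: rho'}, and the $\rho'_{\mu,\nu,\lambda}$ are linearly independent by Lemma \ref{Lem: Basis 0}, I would use the splitting $\rho_{3,\nu}$ constructed in that proof (so that $(\ty_\nu\otimes\cup_b)(\rho_{\mu,\nu}\otimes\ty_{1^b})\rho_{3,\nu}=\ty_\nu$, which factors through $\ty_{b+c}$ since $\cup_b$ does) to detect each $\rho_{\mu,\nu,\lambda}$ separately, showing they too are linearly independent. Then $\{\rho_{\mu,\nu,\lambda}\}$ is a linearly independent set of size equal to the upper bound $\dim\hom_{\sC}(\ty_\mu\otimes\ty_{1^r},\ty_\lambda)\le\#\{\nu\}$, forcing equality and the spanning property.

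The main obstacle is the last step: controlling how much of $\hom_{\sC}(\ty_\mu\otimes\ty_{1^b}\otimes\ty_{1^c},\ty_\lambda)$ survives restriction along $\ty_{1^r}\subseteq\ty_{1^b}\otimes\ty_{1^c}$, since a priori the copy of $\ty_{1^r}$ sees only part of the larger hom-space. The clean way around it is to avoid analyzing the restriction directly and instead match dimensions: establish the upper bound $\dim\hom_{\sC}(\ty_\mu\otimes\ty_{1^r},\ty_\lambda)\le\#\{\nu\in\mu-b:\nu\in\lambda-1^c\}$ by a separate recursive argument (for instance, expanding $X_{1^r}$ using the already-proved fusion rule $X_{1^r}X_\mu=\sum_{i=0}^r\sum_{\nu\in\mu-i}\sum_{\kappa\in\nu+1^{r-i}}X_\kappa$ of Theorem \ref{Thm: fusion with column} — wait, that is proved later — so instead via Lemma \ref{Lem: dim 1} applied iteratively to $\ty_{1^r}\cong\ty_{1}\otimes\cdots$), and then observe our linearly independent family already meets that bound.
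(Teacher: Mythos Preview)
Your approach has a genuine gap, and it is not the one you flag at the end. The core error is the claimed computation of the upper bound. You write
\[
\dim\hom_{\sC}(\ty_\mu \otimes \ty_{1^b}\otimes\ty_{1^c}, \ty_\lambda) \;=\; \sum_{\xi}\dim\hom_{\sC}(\ty_\mu\otimes\ty_{1^b},\ty_\xi)\,\dim\hom_{\sC}(\ty_\xi\otimes\ty_{1^c},\ty_\lambda),
\]
which is fine, but then assert that $\dim\hom_{\sC}(\ty_\mu\otimes\ty_{1^b},\ty_\xi)$ is $1$ exactly when $\xi\in\mu-b$ and $0$ otherwise. Lemma~\ref{Lem: dim 1} does \emph{not} say this: it only computes that hom space when $|\xi|=|\mu|\pm b$, and says nothing for intermediate sizes $|\xi|=a+2,a+4,\ldots$. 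In fact those hom spaces are typically nonzero (this is exactly what the eventual fusion rule shows), so your right-hand side is strictly larger than $\#\{\nu\in\mu-b:\nu\in\lambda-1^c\}$ in general. The inequality you obtain is therefore too weak to pin down the dimension, and the counting argument collapses.

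There is a second gap: you need linear independence of the $\rho_{\mu,\nu,\lambda}$ (not the $\rho'_{\mu,\nu,\lambda}$ of Lemma~\ref{Lem: Basis 0}) to match a lower bound against your upper bound. Passing from independence of the $\rho'$ to independence of the $\rho$ means showing that precomposition with $\ty_\mu\otimes\ty_{b+c}$ is injective on the span of the $\rho'$; your sketch (``$\cup_b$ factors through $\ty_{b+c}$'') does not establish this, and in the paper this is precisely the content of the separate and rather delicate Lemma~\ref{Lem: Basis 2}. So even with a correct upper bound your argument would be incomplete at this step.

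The paper avoids all of this by giving a direct spanning argument with no dimension count: using that $\{x_{p_1,p_2,p_3}=\hat p_3\,\hat p_{a,b,c}\,(\hat p_1\otimes\hat p_2)\}$ already spans $\sC_n$ (every pairing in $P_n$ is realized this way), one sandwiches by $\ty_\lambda$ and $\ty_\mu\otimes\ty_{1^{b+c}}$, inserts the central idempotents $s_a$ (hence the $\ty_\nu$ for $|\nu|=a$) on the internal $a$-strand, and reads off that each resulting element is a linear combination of the $\rho_{\mu,\nu,\lambda}$. Linear independence is then handled afterwards in Lemma~\ref{Lem: Basis 2}.
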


\begin{proof}

For any $p_1 \in P_{a+b}$, $p_2 \in P_{b+c}$ and $p_3 \in P_{a+c}$.
we define
\begin{equation}\label{Equ: xp}
x_{p_1,p_2,p_3}=\hat{p}_3 \hat{p}_{a,b,c} (\hat{p}_1 \otimes \hat{p}_2) .
\end{equation}
By Proposition \ref{Prop: Bp Basis}, $\{x_{p_1,p_2,p_3} : p_1 \in P_{a+b}, p_2 \in P_{b+c}, p_3 \in P_{a+c}.\}$ is a spanning set of $\sC_{n}$, because any pairing in $P_n$ can be implemented by some diagram $x_{p_1,p_2,p_3}$ with a minimal number of crossings.
Note that $y_{1^{b+c}}$ is a central minimal idempotent in $\sC_{b+c}$.
By Equation \eqref{Equ: sn1}, $\ty_{1^{b+c}}$ is a central minimal idempotent in $H_{b+c}$.

By Lemma \ref{Lem: dim 1}, $\dim \hom_{\sC}(\ty_{1^{b+c}}, \ty_{1^b}\otimes \ty_{1^c})=1$, so $\ty_{1^{b+c}} \in \hom_{\sC}(\ty_{1^{b+c}}, \ty_{1^b}\otimes \ty_{1^c})$ and
$(\ty_{1^b}\otimes \ty_{1^c}) \ty_{1^{b+c}}=\ty_{1^{b+c}}$.
We define
\[
\tilde{x}_{p_1,p_2,p_3}=\ty_{\lambda} x_{p_1,p_2,p_3} (\ty_{\mu}\otimes \ty_{1^{b+c}}).
\]
Then $\{\tilde{x}_{p_1,p_2,p_3} : p_1 \in P_{a+b}, p_2 \in P_{b+c}, p_3 \in P_{a+c}.\}$ is a spanning set of $\hom_{\sC}(\ty_{\mu} \otimes \ty_{1^r}, \ty_{\lambda})$.
Recall that the $180^{\circ}$ rotation of $\ty_{1^b}$ is $\ty_{b}$. So
\[
\tilde{x}_{p_1,p_2,p_3}=
\raisebox{-2.5cm}{
\begin{tikzpicture}
\draw[thick] (0,0) --++(0,2);
\draw[thick] (1,2) arc (-180:0:.5);
\draw[thick] (1,0) --++(2,2);
\node at (-.2,1) {$s_a$};
\node at (1-.2,1.8) {$\ty_{b}$};
\node at (1.7,1.8) {$\ty_{1^b}$};
\node at (1.7,1) {$\ty_{1^c}$};
\node at (2,3.2) {$\ty_{1^{b+c}}$};
\node at (2,2.5) {$\hat{p}_2$};
\node at (0,3.2) {$\ty_{\mu}$};
\node at (0,2.5) {$\hat{p}_1$};
\node at (0,-1.2) {$\ty_{\lambda}$};
\node at (0,-.5) {$\hat{p}_3$};
\draw[thick] (2,2) --(2.5,2.5)--++(0,1);
\draw[thick] (3,2) --(2.5,2.5);
\draw[thick] (0,2) --(.5,2.5)--++(0,1);
\draw[thick] (1,2) --(.5,2.5);
\draw[thick] (0,0) --(.5,-.5)--++(0,-1);
\draw[thick] (1,0) --(.5,-.5);
\end{tikzpicture}
}
=\sum_{|\mu|=a} \sum_{j}
\raisebox{-2.5cm}{
\begin{tikzpicture}
\draw[thick] (0,0) --++(0,2);
\draw[thick] (1,2) arc (-180:0:.5);
\draw[thick] (1,0) --++(2,2);
\node at (-.2,1) {$\ty_{\nu}$};
\node at (1-.2,1.8) {$\ty_{b}$};
\node at (1.7,1.8) {$\ty_{1^b}$};
\node at (1.7,1) {$\ty_{1^c}$};
\node at (2,3.2) {$\ty_{1^{b+c}}$};
\node at (2,2.5) {$\ty_{\hat{p}_2}$};
\node at (0,3.2) {$\ty_{\mu}$};
\node at (0,2.5) {$\rho_{1,j}$};
\node at (0,-1.2) {$\ty_{\lambda}$};
\node at (0,-.5) {$\rho_{2,j}$};
\draw[thick] (2,2) --(2.5,2.5)--++(0,1);
\draw[thick] (3,2) --(2.5,2.5);
\draw[thick] (0,2) --(.5,2.5)--++(0,1);
\draw[thick] (1,2) --(.5,2.5);
\draw[thick] (0,0) --(.5,-.5)--++(0,-1);
\draw[thick] (1,0) --(.5,-.5);
\end{tikzpicture}
}
=\sum_{\nu \in \mu-b, \nu\in \lambda -1^{c} } \sum_{j} c_j \rho_{\mu,\nu,\lambda}
,\]
for some $\rho_{1,j} \in \hom_{\ty_{\mu}, \ty_{\nu}\otimes \ty_{b}}$, $\rho_{2,j} \in \hom_{\ty_{\nu} \otimes \ty_{1^c}, \ty_{\lambda} }$, and $c_j \in \mathbb{C}(q)$.
Precisely, the label $a$ is replaced by $s_a$ in the first equality by Equation \eqref{Equ: sn0}.
Then $s_a$ is replaced by $\ty_{\nu}$ in the second equality by Equation~\eqref{Equ: sn1}.
Then we obtain the third equality by Lemma \ref{Lem: dim 1}.
Therefore, $\{\rho_{\mu,\nu, \lambda} : \nu \in \mu-b, \nu\in \lambda -1^{c} \}$ is a spanning set of  $\hom_{\sC}(\ty_{\mu} \otimes \ty_{1^r}, \ty_{\lambda})$.
\end{proof}

\begin{lemma}\label{Lem: Basis 2}
Suppose $a,b,c \in \mathbb{N}$ and $r=b+c$. For any Young diagrams $\mu$ and $\lambda$ with $|\mu|=a+b$, $|\lambda|=a+c$,
the morphisms  $\{\rho_{\mu,\nu, \lambda} : \nu \in \mu-b, \nu\in \lambda -1^{c} \}$ are linearly independent over $\mathbb{C}(q)$.
\end{lemma}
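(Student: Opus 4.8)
The plan is to push the argument of Lemma~\ref{Lem: Basis 0} one step further: build a family of morphisms dual to the $\rho_{\mu,\nu,\lambda}$, show that the resulting Gram matrix is invertible over $\mathbb{C}(q)$, and then conclude linear independence. Combined with the spanning statement of Lemma~\ref{Lem: Basis 1}, this makes $\{\rho_{\mu,\nu,\lambda}:\nu\in\mu-b,\ \nu\in\lambda-1^{c}\}$ a basis of $\hom_{\sC}(\ty_{\mu}\otimes\ty_{1^{r}},\ty_{\lambda})$, so as a byproduct we also obtain the value of its dimension.

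\textbf{Construction and reduction.} First I fix auxiliary morphisms, all available because $\sC$ is semisimple. By Lemma~\ref{Lem: dim 1} the space $\hom_{\sC}(\ty_{1^{b}}\otimes\ty_{1^{c}},\ty_{1^{b+c}})$ is one-dimensional; let $\pi$ be its element normalized by $\pi\,\ty_{b+c}=\ty_{1^{b+c}}$, where $\ty_{b+c}$ denotes the inclusion $\ty_{1^{b+c}}\hookrightarrow\ty_{1^{b}}\otimes\ty_{1^{c}}$ used to define $\rho_{\mu,\nu,\lambda}$, and put $e:=\ty_{b+c}\,\pi\in\mathrm{End}_{\sC}(\ty_{1^{b}}\otimes\ty_{1^{c}})$, the idempotent projecting onto the $\ty_{1^{b+c}}$-summand. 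As in Lemma~\ref{Lem: Basis 0}, for each admissible $\nu$ let $\rho_{3,\nu}\in\hom_{\sC}(\ty_{\nu},\ty_{\mu}\otimes\ty_{1^{b}})$ split the nonzero morphism $\bar\rho_{1,\nu}:=(\ty_{\nu}\otimes\cup_{b})(\rho_{1,\nu}\otimes\ty_{1^{b}})$, and let $\bar\rho_{2,\nu}\in\hom_{\sC}(\ty_{\lambda},\ty_{\nu}\otimes\ty_{1^{c}})$ split $\rho_{2,\nu}$. Set
\[
w_{\nu}:=(\ty_{\mu}\otimes\pi)\,(\rho_{3,\nu}\otimes\ty_{1^{c}})\,\bar\rho_{2,\nu}\ \in\ \hom_{\sC}(\ty_{\lambda},\ty_{\mu}\otimes\ty_{1^{b+c}}).
\]
Since $\ty_{\lambda}$ is simple, $\mathrm{End}_{\sC}(\ty_{\lambda})=\mathbb{C}(q)$, so $\rho_{\mu,\nu,\lambda}\,w_{\nu'}=G_{\nu,\nu'}\,\ty_{\lambda}$ for a scalar matrix $(G_{\nu,\nu'})$. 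If $\sum_{\nu}c_{\nu}\rho_{\mu,\nu,\lambda}=0$, composing on the right with $w_{\nu'}$ yields $\sum_{\nu}c_{\nu}G_{\nu,\nu'}=0$ for every admissible $\nu'$; hence it suffices to prove $\det(G_{\nu,\nu'})\neq 0$ in $\mathbb{C}(q)$.

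\textbf{The matrix $G$ and the main obstacle.} Unwinding the definitions and using $\rho'_{\mu,\nu,\lambda}=\rho_{2,\nu}(\bar\rho_{1,\nu}\otimes\ty_{1^{c}})$ (implicit in the proof of Lemma~\ref{Lem: Basis 0}, coming from the shape of $\hat p_{a,b,c}$), $\rho_{\mu,\nu,\lambda}=\rho'_{\mu,\nu,\lambda}(\ty_{\mu}\otimes\ty_{b+c})$, and $\ty_{b+c}\pi=e$, one finds
\[
G_{\nu,\nu'}\,\ty_{\lambda}=\rho_{2,\nu}\,(\bar\rho_{1,\nu}\otimes\ty_{1^{c}})\,(\ty_{\mu}\otimes e)\,(\rho_{3,\nu'}\otimes\ty_{1^{c}})\,\bar\rho_{2,\nu'}.
\]
Were $e$ the identity of $\ty_{1^{b}}\otimes\ty_{1^{c}}$, this would collapse, via $\bar\rho_{1,\nu}\rho_{3,\nu'}=\delta_{\nu\nu'}\ty_{\nu}$ and $\rho_{2,\nu'}\bar\rho_{2,\nu'}=\ty_{\lambda}$, to $G_{\nu,\nu'}=\delta_{\nu\nu'}$, which is precisely the content of Lemma~\ref{Lem: Basis 0}. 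The main obstacle is therefore the idempotent $e$: because $\ty_{b+c}$ only includes $\ty_{1^{b+c}}$ as a summand of $\ty_{1^{b}}\otimes\ty_{1^{c}}$, the morphism $\rho_{\mu,\nu,\lambda}$ is a genuine restriction of $\rho'_{\mu,\nu,\lambda}$, so $(G_{\nu,\nu'})=\mathrm{id}-E$ is not manifestly diagonal, the correction $E$ being supported on the summands $\ty_{\kappa}\subseteq\ty_{1^{b}}\otimes\ty_{1^{c}}$ with $\kappa\neq 1^{b+c}$.

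\textbf{Finishing.} To conclude, I would show $\mathrm{id}-E$ is invertible. The route I expect to work is to prove that $E$ is strictly triangular for the dominance (or containment) order on the Young diagrams $\nu$: the entry $E_{\nu,\nu'}$ factors through $\ty_{\mu}\otimes\ty_{\kappa}$ with $\kappa\neq1^{b+c}$, and tracking this through the elementary ``remove a vertical strip / add a vertical strip'' morphisms $\rho_{1,\nu},\rho_{2,\nu},\rho_{3,\nu},\bar\rho_{2,\nu}$ should force $E_{\nu,\nu'}=0$ unless $\nu$ is strictly below $\nu'$, making $\mathrm{id}-E$ unitriangular and hence invertible. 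Should this combinatorial bookkeeping become unwieldy, the fallback is to specialize $q\to 1$ exactly as in Lemma~\ref{Lem: non-zero}: there $H_{\bullet}$ degenerates to $\mathbb{C}[S_{\bullet}]$, the idempotent $e$ and the morphisms $\rho_{1,\nu},\rho_{2,\nu},\rho_{3,\nu},\bar\rho_{2,\nu}$ acquire explicit combinatorial limits with no pole at $q=1$, and $(G_{\nu,\nu'})\big|_{q=1}$ becomes a matrix of classical $S_{\bullet}$-branching coefficients that is seen to be invertible by the same triangularity performed purely at $q=1$; a nonzero value at $q=1$ of the rational function $\det(G_{\nu,\nu'})$ then forces $\det(G_{\nu,\nu'})\not\equiv 0$, and the linear independence follows.
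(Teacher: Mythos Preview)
Your setup is natural, but the proof has a genuine gap at its crucial step. You correctly identify that the obstruction is the idempotent $e$ projecting $\ty_{1^b}\otimes\ty_{1^c}$ onto its summand $\ty_{1^{b+c}}$, and you reduce everything to the invertibility of $G=\mathrm{id}-E$. However, neither of your two proposed routes is actually carried out: the triangularity of $E$ for a dominance or containment order on $\nu$ is only asserted (``should force'', ``I expect''), and your $q\to 1$ fallback explicitly relies on that same unproven triangularity. There is no evident mechanism by which factoring through a summand $\ty_\kappa\subset\ty_{1^b}\otimes\ty_{1^c}$ with $\kappa\neq 1^{b+c}$ would impose a strict order relation between $\nu$ and $\nu'$: in $\sC$ (as opposed to $H$) the tensor product $\ty_{1^b}\otimes\ty_{1^c}$ contains summands $\ty_\kappa$ with $|\kappa|<b+c$ as well as $|\kappa|=b+c$, and the morphisms $\bar\rho_{1,\nu},\rho_{2,\nu},\rho_{3,\nu'},\bar\rho_{2,\nu'}$ interact with these in ways your bookkeeping does not control. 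As written, the entire content of the lemma is concentrated in this unverified step.

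The paper's argument is genuinely different and avoids the Gram matrix altogether. It works directly in the pairing basis $\mathcal{B}_n=\{\hat p:p\in P_n\}$ of $\sC_n$, singling out a combinatorially defined subset $S_0\subset P_n$ on which the basis coefficients of $\rho_{\mu,\nu,\lambda}$ agree with those of $\rho'_{\mu,\nu,\lambda}(\ty_\mu\otimes\ty_{1^{b+c},b})$, where $\ty_{1^{b+c},b}$ is the partial sum of $\ty_{1^{b+c}}$ over $T_{b,c}$ introduced in Equation~\eqref{Equ: yi}. Lemma~\ref{Lem: non-zero} (whose $q\to 1$ specialization concerns a \emph{single} scalar $c_0\in\mathbb{C}(q)$, not an entire determinant) gives $\ty_{1^{b+c},b}(\ty_{1^b}\otimes\ty_{1^c})=c_0\,(\ty_{1^b}\otimes\ty_{1^c})$ with $c_0\neq 0$; from this the paper deduces that any linear dependence among the $\rho_{\mu,\nu,\lambda}$ forces one among the $\rho'_{\mu,\nu,\lambda}$, contradicting Lemma~\ref{Lem: Basis 0}. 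The idea you are missing is precisely this use of $\ty_{1^{b+c},b}$ together with the basis $\mathcal{B}_n$ to pass from $\rho$ back to $\rho'$ without ever needing to analyze the complementary summands of $\ty_{1^b}\otimes\ty_{1^c}$.
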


\begin{proof}
Take $n=a+b+c$, and define
\begin{itemize}
\item $S_1=\{k \in \mathbb{N} : 1\leq k \leq a+b \}$,
\item $S_2=\{k \in \mathbb{N} : a+b< k \leq a+2b+c \}$,
\item $S_3=\{k \in \mathbb{N} : a+2b+c < k \leq 2n \} $,
\item $S=\{p \in \ P_n: p \text{ has no pair in } S_i, ~i=1,2,3 \}$.
\end{itemize}
Note that for any pairing $p \in S$, $p$ has $a$ pairs between $S_1$ and $S_3$; $b$ pairs between $S_1$ and $S_2$; and $c$ pairs between $S_2$ and $S_3$.
So we obtain a bijection $\iota: T_{a+b} \times T_{b+c} \times T_{a+c} \to S$ via
\begin{align*}
\iota(p_1,p_2,p_3)=p_{a,b,c} \circ (p_1\otimes p_2 \otimes p_3),
\end{align*}
where $p_{a,b,c}$ is defined in Equation \eqref{Equ: p_{a,b,c}}, and $p_1\otimes p_2 \otimes p_3$ is a permutation on $2n$ points,
\begin{align*}
(p_1\otimes p_2 \otimes p_3)(k)
=\left\{
\begin{aligned}
&p_1(k), &&\forall~ 1\leq k \leq a+b, \\
&p_2(k-a-b)+a+b, &&\forall~ a+b < k \leq n+b, \\
&p_3(k-n-b)+n+b, &&\forall~ n+b < k \leq 2n. \\
\end{aligned}
\right.
\end{align*}
For any $p \in S$, we can choose $\hat{p} \in \mathcal{B}_n$ as
\[
\hat{p}=x_{\iota^{-1}(p)},
\]
where $x_{p_1,p_2,p_3}$ is defined in Equation \eqref{Equ: xp}.

Assume that
\[
\sum_{\nu} c_{\mu,\nu,\lambda} \rho_{\mu,\nu, \lambda} =0
\]
for some $c_{\mu,\nu,\lambda} \in \mathbb{C}(q)$.

Recall that  $\rho' \in \hom_{\sC}(\ty_{\mu} \otimes \ty_{1^{b}} \otimes \ty_{1^c} , \ty_{\lambda})$ and $\rho \in \hom_{\sC}(\ty_{\mu} \otimes \ty_{1^{b+c}}, \ty_{\lambda})$ are defined in Equations \eqref{Equ: rho} and \eqref{Equ: rho'}.
We identify the two hom spaces with subspaces of $\sC_{n}$.
By Proposition \ref{Prop: Bp Basis},
\begin{align*}
\rho_{\mu,\nu, \lambda} &=\sum_{p \in P_n} b_{\mu,\nu,\lambda}(p) \hat{p} \;; \\
\rho'_{\mu,\nu, \lambda} (\ty_{\mu} \otimes \ty_{1^{b+c},b}) &=\sum_{p \in P_n} b'_{\mu,\nu,\lambda}(p) \hat{p} \;,
\end{align*}
for some $b_{\mu,\nu,\lambda}(p), b'_{\mu,\nu,\lambda}(p) \in \mathbb{C}(q)$, and
\[
\sum_{\nu} c_{\mu,\nu,\lambda}  b_{\mu,\nu,\lambda}(p) =0, ~\forall p\in P_n \;.
\]

Take $S_0=\iota(T_{a+b} \times T_{b,c} \times T_{a+c})$.
Note that
\begin{align*}
\rho_{\mu,\nu,\lambda}=
\raisebox{-2.5cm}{
\begin{tikzpicture}
\draw[thick] (0,0) --++(0,2);
\draw[thick] (1,2) arc (-180:0:.5);
\draw[thick] (1,0) --++(2,2);
\node at (-.2,1) {$\ty_{\nu}$};
\node at (1-.2,1.8) {$\ty_{b}$};
\node at (1.7,1.8) {$\ty_{1^b}$};
\node at (1.7,1) {$\ty_{1^c}$};
\node at (2,2.5) {$\ty_{1^{b+c}}$};
\node at (0,3.2) {$\ty_{\mu}$};
\node at (0,2.5) {$\rho_{1,\nu}$};
\node at (0,-1.2) {$\ty_{\lambda}$};
\node at (0,-.5) {$\rho_{2,\nu}$};
\draw[thick] (2,2) --(2.5,2.5)--++(0,1);
\draw[thick] (3,2) --(2.5,2.5);
\draw[thick] (0,2) --(.5,2.5)--++(0,1);
\draw[thick] (1,2) --(.5,2.5);
\draw[thick] (0,0) --(.5,-.5)--++(0,-1);
\draw[thick] (1,0) --(.5,-.5);
\end{tikzpicture}
}
=
\raisebox{-2.5cm}{
\begin{tikzpicture}
\draw[thick] (0,0) --++(0,2);
\draw[thick] (1,2) arc (-180:0:.5);
\draw[thick] (1,0) --++(2,2);
\node at (-.2,1) {$a$};
\node at (1-.2,1.8) {$b$};
\node at (1.7,1) {$c$};
\node at (2,2.5) {$\ty_{1^{b+c}}$};
\node at (2,3.2) {$b+c$};
\node at (0,3.2) {$a+b$};
\node at (0,2.5) {$\rho_{1,\nu}$};
\node at (0,-1.2) {$a+c$};
\node at (0,-.5) {$\rho_{2,\nu}$};
\draw[thick] (2,2) --(2.5,2.5)--++(0,1);
\draw[thick] (3,2) --(2.5,2.5);
\draw[thick] (0,2) --(.5,2.5)--++(0,1);
\draw[thick] (1,2) --(.5,2.5);
\draw[thick] (0,0) --(.5,-.5)--++(0,-1);
\draw[thick] (1,0) --(.5,-.5);
\end{tikzpicture}
}
\end{align*}
On the other hand, $\ty_{1^{b+c},b}=\sum_{k}c_j  \hat{p}_{1,j} \otimes \hat{p}_{2,j}$ for some $c_j \in \mathbb{C}(q)$, $p_{1,j} \in T_{b}$ and $p_{2,j} \in T_{c}$ as defined in Equation \eqref{Equ: yi}. So
\begin{align*}
\rho'_{\mu,\nu, \lambda} (\ty_{\mu} \otimes \ty_{1^{b+c},b})
=\sum_{j}
\raisebox{-2.5cm}{
\begin{tikzpicture}
\draw[thick] (0,0) --++(0,2);
\draw[thick] (1,2) arc (-180:0:.5);
\draw[thick] (1,0) --++(2,2);
\node at (-.2,1) {$a$};
\node at (1-.2,1.8) {$b$};
\node at (1.7,1) {$c$};
\node at (2-.3,2.5) {$\hat{p}_{1,j}$};
\node at (3-.3,2.5) {$\hat{p}_{2,j}$};
\node at (1.8,3.2) {$b$};
\node at (2.8,3.2) {$c$};
\node at (0,3.2) {$a+b$};
\node at (0,2.5) {$\rho_{1,\nu}$};
\node at (0,-1.2) {$a+c$};
\node at (0,-.5) {$\rho_{2,\nu}$};
\draw[thick] (2,2) --++(0,1.5);
\draw[thick] (3,2) --++(0,1.5);
\draw[thick] (0,2) --(.5,2.5)--++(0,1);
\draw[thick] (1,2) --(.5,2.5);
\draw[thick] (0,0) --(.5,-.5)--++(0,-1);
\draw[thick] (1,0) --(.5,-.5);
\end{tikzpicture}
}
\end{align*}
Note that for any $p\in S_0$, if we express $\ty_{1^{b+c}}$ in terms of the basis $\mathcal{B}_{b+c}$, then only the components in $T_{b}\times T_{c}$  contribute non-zero coefficients of $p$.
Recall that $\ty_{1^{b+c},b}$ is the sum of such components of $\ty_{1^{b+c}}$ in $T_{b}\times T_{c}$, so
\begin{align*}
b'_{\mu,\nu,\lambda}(p)&= b_{\mu,\nu,\lambda}(p) \;, &&~\forall~ p\in S_0 \;, \\
b'_{\mu,\nu,\lambda}(p)&=0 \;, &&~\forall~ p \in S\setminus S_0 \;.
\end{align*}
Then
\[
\sum_{p\in S_0} \sum_{\nu} c_{\mu,\nu,\lambda}  b'_{\mu,\nu,\lambda}(p) \hat{p} =\sum_{p\in S_0} \sum_{\nu} c_{\mu,\nu,\lambda}  b_{\mu,\nu,\lambda}(p) \hat{p} =0 .
\]
Note that if $b'_{\mu,\nu,\lambda}(p) \neq 0$, $p$ has no pair in $S_1$ or $S_2$, then $p$ has no pair between the first $b$ points and the last $c$ points in $S_3$. So
\[
b'_{\mu,\nu,\lambda}(p) \ty_{\lambda}  \hat{p}  (\ty_{\mu} \otimes \ty_{1^b} \otimes \ty_{1^c}) \neq 0, \text{ only when } p \in S_0.
\]
By Lemma \ref{Lem: non-zero}, $\ty_{1^{b+c},b}(\ty_{1^b} \otimes \ty_{1^c})= c_0 \ty_{1^b} \otimes \ty_{1^c}$ for some $c_0 \neq 0$ in $\mathbb{C}(q)$. So
\begin{align*}
&c_0 \sum_{\nu} c_{\mu,\nu,\lambda}   \rho'(\tilde{\mu,\nu,\lambda}) \\
=&c_0 \sum_{\nu}  c_{\mu,\nu,\lambda}  \ty_{\lambda} \rho'(\tilde{\mu,\nu,\lambda}) (\ty_{\mu} \otimes \ty_{1^b} \otimes \ty_{1^c}) \\
=&\sum_{\nu}   c_{\mu,\nu,\lambda}  \ty_{\lambda} \rho'(\tilde{\mu,\nu,\lambda}) (\ty_{\mu} \otimes \ty_{1^{b+c},b})  (\ty_{\mu} \otimes \ty_{1^b} \otimes \ty_{1^c}) \\
=&\sum_{\nu} c_{\mu,\nu,\lambda} \ty_{\lambda} \left(  \sum_{p\in P_n}  b'_{\mu,\nu,\lambda}(p) \hat{p} \right) (\ty_{\mu} \otimes \ty_{1^b} \otimes \ty_{1^c}) \\
=& \sum_{\nu}  c_{\mu,\nu,\lambda}  \sum_{p\in S_0} b'_{\mu,\nu,\lambda}(p) \ty_{\lambda} \hat{p} (\ty_{\mu} \otimes \ty_{1^b} \otimes \ty_{1^c})  \\
=& \ty_{\lambda}   \left( \sum_{p\in S_0}  \sum_{\nu} c_{\mu,\nu,\lambda}  b'_{\mu,\nu,\lambda}(p) \hat{p}  \right) (\ty_{\mu} \otimes \ty_{1^b} \otimes \ty_{1^c}) \\
=&0 \;.
\end{align*}
By Lemma \ref{Lem: Basis 0}, we have that $c_{\mu,\nu,\lambda} =0$ for all $\nu$, Therefore, the elements $\{\rho'_{\mu,\nu, \lambda} : \nu \in \mu-b, \nu\in \lambda -1^{c} \}$ are linear independent in $\sC_n$.
In particular, $\rho'_{\mu,\nu, \lambda} \neq 0$, whenever $\nu \in \mu-b, \nu\in \lambda -1^{c}$.

We consider $\hat{p}$ as a morphism in $\hom_{\sC}(X^{n+b}, X^{a+c})$.
Then
\[
\ty_{\lambda} \left( \sum_{p\in S} \sum_{\nu} c_{\mu,\nu,\lambda}  b_{\mu,\nu,\lambda}(p) \hat{p} \right)  (\ty_{\mu} \otimes \ty_{1^b} \otimes \ty_{1^c})=0.
\]
Note that the set $S$ of pairings are the same as the pairings implemented by

\[
=\sum_{\nu}  c_{\mu,\nu,\lambda}  c_{r,i} \rho_{\nu,\lambda} (\ty_{\nu} \otimes m_{k} \otimes \ty_{1^{n-k}}) (\rho_{\mu,\nu} \otimes \ty_{1^i} \otimes \ty_{1^{r-i}})  \; .
\]
Note that $\rho_{\nu,\lambda} (\ty_{\nu} \otimes m_{k} \otimes \ty_{1^{n-k}}) (\rho_{\mu,\nu} \otimes \ty_{1^i} \otimes \ty_{1^{r-i}})\neq 0$  and they are linearly independent for different $\tau$.
Therefore,  $c_{\mu,\nu,\lambda}  c_{r,i}=0$. Recall that $c_{r,i} \neq 0$, so $c_{\mu,\nu,\lambda} =0$.
Therefore, the morphisms $\{\rho_{\mu,\nu, \lambda} : \nu \in \mu-b, \nu\in \lambda -1^{c} \}$ are linearly independent.
\end{proof}

\begin{theorem}\label{Thm: fusion with column}
Suppose $a,b,c \in \mathbb{N}$ and $r=b+c$. For any Young diagrams $\mu$ and $\lambda$, $|\mu|=a+b$, $|\lambda|=a+c$,
the elements $\{\rho_{\mu,\nu, \lambda} : \nu \in \mu-b, \nu\in \lambda -1^{c} \}$ form a basis of  $\hom_{\sC}(\ty_{\mu} \otimes \ty_{1^n}, \ty_{\lambda})$.
In particular, we obtain the fusion for $X_{1^r}$ in a closed form:
\[
X_{(1^r)} X_{\mu}=\sum_{i=0}^{r} \sum_{\nu \in \mu- i} \sum_{\lambda \in \nu+1^{r-i}}X_{\lambda}.
\]
\end{theorem}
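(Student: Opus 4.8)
The plan is to combine the three preceding lemmas to pin down the hom-space, then read off the fusion rule via dimension counting. First I would observe that Lemmas~\ref{Lem: Basis 1} and~\ref{Lem: Basis 2} together say that $\{\rho_{\mu,\nu,\lambda} : \nu \in \mu-b,\ \nu \in \lambda-1^{c}\}$ is simultaneously a spanning set and a linearly independent set of $\hom_{\sC}(\ty_{\mu} \otimes \ty_{1^r}, \ty_{\lambda})$, hence a basis. So the dimension of this hom-space equals the number of Young diagrams $\nu$ that can be obtained from $\mu$ by deleting $b$ cells (no two in a column --- this is the meaning of $\mu-b$) and from which $\lambda$ is obtained by adding $c$ cells (no two in a row, i.e.\ $\nu \in \lambda - 1^c$ iff $\lambda \in \nu + 1^c$).

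Next I would pass from hom-space dimensions to the fusion coefficient. Since $\sC^{q,1}$ is semisimple, the multiplicity of $X_{\lambda}$ in the product $X_{1^r} X_{\mu} = \sum_{\lambda} R_{1^r,\mu}^{\lambda} X_{\lambda}$ is exactly $\dim \hom_{\sC}(\ty_{1^r} \otimes \ty_{\mu}, \ty_{\lambda})$. But the fusion ring is commutative by Theorem~\ref{Thm: ring iso X}, so this equals $\dim \hom_{\sC}(\ty_{\mu} \otimes \ty_{1^r}, \ty_{\lambda})$, which we have just computed. Thus $R_{1^r,\mu}^{\lambda}$ is the number of pairs $(b,\nu)$ with $b+c=r$, $\nu \in \mu-b$, $\lambda \in \nu+1^c$; writing $i=b$ and $r-i=c$ and summing over $\lambda$ gives precisely
\[
X_{(1^r)} X_{\mu} = \sum_{i=0}^{r}\sum_{\nu \in \mu-i}\sum_{\lambda \in \nu + 1^{r-i}} X_{\lambda}.
\]

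The only genuine subtlety --- and the step I would be most careful about --- is making sure the indexing of the basis is consistent: the morphisms $\rho_{\mu,\nu,\lambda}$ are defined only after fixing a splitting $r = b+c$, and for fixed $\lambda$ a given $\nu$ can occur for at most one value of $c$ (namely $c = |\lambda| - |\nu|$), so there is no double-counting across different $(b,c)$. I would also note that one must check $\rho_{\mu,\nu,\lambda}$ is nonzero for each admissible $\nu$ --- but this is already contained in the proof of Lemma~\ref{Lem: Basis 2} (linear independence forces each $\rho'_{\mu,\nu,\lambda} \neq 0$, and Lemma~\ref{Lem: Basis 0} gives the same for $\rho_{\mu,\nu,\lambda}$ via the retraction argument). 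Everything else is bookkeeping: collecting the basis statement from the three lemmas, invoking semisimplicity and commutativity, and reorganizing the sum. The combinatorial description of the multiplicity in terms of ``remove $i$ cells, no two in a column, then add $r-i$ cells, no two in a row'' is then just a restatement of the conditions $\nu \in \mu - i$ and $\lambda \in \nu + 1^{r-i}$.
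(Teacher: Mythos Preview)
Your proposal is correct and follows the paper's approach: the paper's proof consists of a single sentence citing Lemmas~\ref{Lem: Basis 1} and~\ref{Lem: Basis 2} to conclude that the set is a basis, and then states the fusion formula without further comment. You have simply spelled out the passage from the hom-space dimensions to the fusion rule (semisimplicity, commutativity, and the bookkeeping that for fixed $\mu,\lambda,r$ the triple $(a,b,c)$ is determined), which the paper leaves implicit; your mention of Lemma~\ref{Lem: Basis 0} is unnecessary here since it is already absorbed into the proof of Lemma~\ref{Lem: Basis 2}.
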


\begin{proof}
By Lemmas \ref{Lem: Basis 1}, \ref{Lem: Basis 2}, $\{\rho_{\mu,\nu, \lambda} : \nu \in \mu-b, \nu\in \lambda -1^{c} \}$ form a basis of  $\hom_{\sC}(\ty_{\mu} \otimes \ty_{1^n}, \ty_{\lambda})$.
\end{proof}
We remove $i$ cells from $\mu$ (no two in the same column), and then we add $r-i$ cells (no two in the same row).

\begin{corollary}\label{Cor: fusion with row}
Applying the automorphism $\Omega$, we obtain the fusion with $X_{r}$ in a closed form:
\[
X_{(r)} X_{\mu}=\sum_{i=0}^{r} \sum_{\nu \in \mu- 1^i} \sum_{\lambda \in \nu+(r-i)}X_{\lambda}.
\]
\end{corollary}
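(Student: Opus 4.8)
The plan is to deduce the identity directly from Theorem \ref{Thm: fusion with column} by transporting that result through the $\mathbb{Z}_2$ automorphism $\Omega$ of $\sC$ which sends the generator $R$ to $-R$. First I would record that $\Omega$ is a $\mathbb{C}(q)$-linear monoidal automorphism of $\sC$, hence induces a ring automorphism of the Grothendieck ring $\mathcal{G}$; since $\Omega(\ty_\lambda)$ is equivalent to $\ty_{\lambda'}$ (Proposition 9.5 of \cite{LiuYB}, already invoked in the proof of Lemma \ref{Lem: dim 1}, and compatible with Proposition \ref{Prop: dual object}), this induced automorphism acts on the distinguished basis of $\mathcal{G}$ by $X_\lambda \mapsto X_{\lambda'}$. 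In particular it is an involution, and since $(1^r)' = (r)$ it interchanges $X_{(1^r)}$ and $X_{(r)}$.

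Next I would apply $\Omega$ to the closed-form fusion rule
\[
X_{(1^r)} X_{\mu}=\sum_{i=0}^{r} \sum_{\nu \in \mu- i} \sum_{\lambda \in \nu+1^{r-i}}X_{\lambda}
\]
of Theorem \ref{Thm: fusion with column}. Being a ring homomorphism, $\Omega$ sends the left-hand side to $X_{(r)} X_{\mu'}$ and the right-hand side to $\sum_{i=0}^{r} \sum_{\nu \in \mu- i} \sum_{\lambda \in \nu+1^{r-i}} X_{\lambda'}$, so it remains only to rewrite the two index sets under transposition of Young diagrams.

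The one genuinely combinatorial point — and the only place any care is needed — is this translation. Removing $i$ cells from $\mu$ with no two in the same column is, after transposing, the same as removing $i$ cells from $\mu'$ with no two in the same row; that is, $\nu \in \mu - i$ is equivalent to $\nu' \in \mu' - 1^i$. Likewise adding $r-i$ cells to $\nu$ with no two in the same row transposes to adding $r-i$ cells to $\nu'$ with no two in the same column, i.e.\ $\lambda \in \nu + 1^{r-i}$ is equivalent to $\lambda' \in \nu' + (r-i)$. Substituting these, writing $\mu$ in place of $\mu'$ (harmless since $\mu$ ranges over all Young diagrams) and taking $\nu'$ and $\lambda'$ as the new summation variables, the right-hand side becomes $\sum_{i=0}^{r} \sum_{\nu \in \mu - 1^i} \sum_{\lambda \in \nu + (r-i)} X_\lambda$, which is exactly the claimed formula. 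I do not expect any real obstacle here beyond this bookkeeping; all of the substantive work has already been carried out in establishing Theorem \ref{Thm: fusion with column}.
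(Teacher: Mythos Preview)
Your proposal is correct and follows exactly the approach indicated by the paper, which does not supply a formal proof but simply signals in the statement that the corollary is obtained by applying the automorphism $\Omega$ to Theorem~\ref{Thm: fusion with column}. You have accurately filled in the details: that $\Omega$ induces the ring involution $X_\lambda \mapsto X_{\lambda'}$ on $\mathcal{G}$, and that transposition interchanges the index sets $\mu - i \leftrightarrow \mu' - 1^i$ and $\nu + 1^{r-i} \leftrightarrow \nu' + (r-i)$.
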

We remove $i$ cells from $\lambda$ (no two in the same row), and then we add $n-i$ cells (no two in the same column).

\begin{remark}
The morphisms can be constructed explicitly following the construction in \cite{LiuYB}.
They are essentially used in the proof of Theorem \ref{Thm: fusion with column}.
We are going to compute the characters and the generating functions in the next section using Theorem \ref{Thm: fusion with column}.
\end{remark}

\section{Characters, Generating Functions and Fusion Rules for the Generic Case}\label{Sec: Characters, Generating Functions and Fusion Rules for the Generic Case}
We begin by introducing the tools we will need from the theory of symmetric functions. All the material we use can be found in the first chapter of \cite{Macdonald}.
\subsection{Symmetric Functions}
Recall that the ring of symmetric functions, $\Lambda$, is defined in the following way.
\begin{definition}
Let $n$ be a natural number, and $R_n = \mathbb{Z}[x_1, x_2, \ldots, x_n]^{S_n}$ be the ring of symmetric polynomial in $n$ variables. We write $R_n^k$ for the degree $k$ component of $R_n$. For each $k$, we have maps $\rho_n: R_n^k \to R_{n-1}^k$ defined by setting $x_n = 0$; these form an inverse system, so we may take the inverse limit $\displaystyle \lim_\leftarrow R_n^k$. Then, as an abelian group, we define:
\[
\Lambda = \bigoplus_{k \geq 0} \lim_\leftarrow R_n^k.
\]
The multiplication on $\Lambda$ is inherited from the multiplication $R_n^{k_1} \otimes R_n^{k_2} \to R_n^{k_1+k_2}$. We may complete $\Lambda$ with respect to the grading. In this case we obtain
\[
\hat{\Lambda} = \prod_{k \geq 0} \lim_\leftarrow R_n^k.
\]
\end{definition}
We introduce some important elements of the ring of symmetric functions.
\begin{proposition}
We have the following facts about $\Lambda$:
\begin{enumerate}
\item The polynomials $\displaystyle \sum_{i_1 < i_2 < \cdots < i_r} x_{i_1}x_{i_2} \cdots x_{i_r} \in R_n^{r}$ define an element $e_r \in \displaystyle \lim_\leftarrow R_n^k$ called the $r$-th elementary symmetric function. These $e_r$ freely generate $\Lambda$ as a polynomial ring: $\Lambda = \mathbb{Z}[e_1, e_2, \ldots]$. We have the generating function $\displaystyle E(t) = \sum_r e_r t^r = \prod_i (1 + x_it)$.
\item Similarly, the polynomials $\displaystyle \sum_{i_1 \leq i_2 \leq \cdots \leq i_r} x_{i_1}x_{i_2} \cdots x_{i_r} \in R_n^{r}$ define an element $h_r \in \displaystyle \lim_\leftarrow R_n^k$ called the $r$-th complete symmetric function. These $h_r$ also freely generate $\Lambda$ as a polynomial ring: $\Lambda = \mathbb{Z}[h_1, h_2, \ldots]$. We have the generating function $\displaystyle H(t) = \sum_r h_r t^r = \prod_i (1 - x_it)^{-1}$.
\item The polynomials $\displaystyle \sum_i x_i^r \in R_n^{r}$ define an element $p_r \in \displaystyle \lim_\leftarrow R_n^k$ called the $r$-th power-sum symmetric function. They freely generate $\mathbb{Q} \otimes \Lambda$ as a polynomial ring over $\mathbb{Q}$ (but they do not generate $\Lambda$ over $\mathbb{Z}$). We have the generating function $\displaystyle P(t) = \sum_r p_{r+1} t^r = \sum_i \frac{x_i}{1 - x_it}$.
\item The generating functions $E(t)$ and $H(t)$ satisfy the relation $H(t)E(-t) = 1$, and this equation encodes how to express the elementary symmetric functions in terms of the complete symmetric functions and vice versa. Similarly, we have $H^\prime(t)/H(t) = P(t)$, and $E^\prime(t)/E(t) = P(-t)$. In particular, we have the equations
\begin{eqnarray*}
\sum_{r \geq 0} h_r t^r &=& \exp \left( \sum_{i \geq 1} \frac{p_i}{i}t^i \right),\\
\sum_{r \geq 0} e_r t^r &=& \exp \left( \sum_{i \geq 1} \frac{(-1)^{i-1}p_i}{i}t^i \right).
\end{eqnarray*}
\item Elements of $\Lambda \otimes \Lambda$ may be viewed as polynomials in two sets of variables, say $x_i$ and $y_j$, symmetric in each separately. To indicate which variable set is being considered, we write $f(x)$ or $f(y)$. Given $f \in \Lambda$, we write $f(x, y)$ for the element of $\Lambda \otimes \Lambda$ defined by the symmetric function $f$ in the variable set $\{x_i\} \cup \{y_j\}$. (This operation defines a comultiplication $\Lambda \to \Lambda \otimes \Lambda$.)
\item Fix a Young diagram $\lambda = (\lambda_1, \lambda_2, \ldots)$, adding trailing zeros if needed, so that $\lambda$ has $n$ parts (usually we do not distinguish between Young diagrams that differ by trailing zeros). The polynomials $\det(x_i^{\lambda_j+n-j}) / \det(x_i^{n-j}) \in R_{n}^{|\lambda|}$ define an element $s_\lambda \in \displaystyle \lim_\leftarrow R_n^{|\lambda|}$, called the Schur function associated to $\lambda$.
\item We have that $s_{1^r} = e_r$ and $s_{r} = h_r$.
\end{enumerate}
\end{proposition}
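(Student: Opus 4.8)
The plan is to note that each assertion is a classical fact from the theory of symmetric functions, so the proof amounts to recalling the short standard arguments and citing \cite{Macdonald} for complete details. For (1)--(3) I would first check well-definedness in the inverse limit: substituting $x_n=0$ into $e_r$, $h_r$, or $p_r$ in $n$ variables annihilates exactly the monomials involving $x_n$ and returns the corresponding symmetric polynomial in $n-1$ variables, so each family is compatible with the maps $\rho_n$ and defines an element of $\lim_\leftarrow R_n^r$. The generating-function formulas are then immediate expansions of $\prod_i(1+x_it)$, of $\prod_i(1-x_it)^{-1}=\prod_i\sum_{k\ge 0}x_i^kt^k$, and of $\sum_i x_i/(1-x_it)=\sum_i\sum_{k\ge 0}x_i^{k+1}t^k$. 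That the $e_r$ freely generate $\Lambda$ is the fundamental theorem on symmetric functions, applied uniformly in $n$ and passed to the limit; freeness of the $h_r$ then follows from part (4).

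For (4), the identity $H(t)E(-t)=1$ is just $\prod_i(1-x_it)^{-1}\prod_i(1-x_it)=1$, and comparing coefficients of $t^r$ converts the $e$'s into the $h$'s and back. Taking logarithmic derivatives of $H(t)$ and $E(t)$ gives $H'(t)/H(t)=\sum_i x_i/(1-x_it)=P(t)$ and $E'(t)/E(t)=\sum_i x_i/(1+x_it)=P(-t)$; integrating over $\mathbb{Q}$ and exponentiating produces the two $\exp$-formulas. The claim that the $p_r$ generate $\mathbb{Q}\otimes\Lambda$ but not $\Lambda$ over $\mathbb{Z}$ follows from Newton's identities, which express the $p_r$ in the $e_i$ with integer coefficients but require denominators to invert (e.g.\ $e_2=(p_1^2-p_2)/2$). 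For (5), I would observe that evaluating a symmetric function on a disjoint union of alphabets is a ring homomorphism $\Lambda\to\Lambda\otimes\Lambda$; this reduces on generators to the identity $e_r(x,y)=\sum_{i+j=r}e_i(x)e_j(y)$, after which coassociativity and cocommutativity are formal.

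For (6), the denominator $\det(x_i^{n-j})$ is the Vandermonde $\prod_{i<j}(x_i-x_j)$, and the numerator $\det(x_i^{\lambda_j+n-j})$ is alternating in the $x_i$, hence divisible by the Vandermonde, so $s_\lambda$ is a genuine symmetric polynomial of degree $|\lambda|$; the one computation worth writing out is that expanding the numerator along the last column shows the specialization $x_n=0$ equals the bialternant for $\lambda$ in $n-1$ variables, so the $s_\lambda$ are compatible with the $\rho_n$. For (7), I would use the combinatorial description of Schur polynomials as generating functions over semistandard tableaux (equivalently the Jacobi-Trudi identities): a semistandard tableau of shape $(1^r)$ is a strictly increasing column $i_1<\cdots<i_r$ contributing $x_{i_1}\cdots x_{i_r}$, so $s_{1^r}=e_r$, and one of shape $(r)$ is a weakly increasing row contributing $x_{i_1}\cdots x_{i_r}$, so $s_{(r)}=h_r$.

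None of these steps is deep; the only real work is the bookkeeping that each construction is consistent along the inverse system $\{R_n^k\}$, and the single genuinely nonroutine verification --- stability of the Schur bialternant under setting a variable to zero --- is handled by the last-column expansion above. I therefore expect the proof in the paper to be essentially a pointer to Chapter I of \cite{Macdonald}.
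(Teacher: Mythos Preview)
Your proposal is correct and your expectation is exactly right: the paper gives no proof of this proposition at all, treating it as standard background and referring the reader to Chapter~I of \cite{Macdonald} (as announced at the start of \S4). Your sketches supply considerably more detail than the paper does, but they agree with the standard arguments one finds there, so there is nothing to compare.
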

\begin{example}
We have:
\[
\frac{p_1^2 + p_2}{2} = \frac{1}{2}\left(\sum_{i \neq j} x_i x_j + 2 \sum_i x_i^2\right) = \sum_{i \leq j}x_ix_j = h_2.
\]
\end{example}
\begin{remark}
Schur functions may be viewed as the characters of irreducible representations of $GL_n(\mathbb{C})$ in the following sense. If $M \in GL_n(\mathbb{C})$ has eigenvalues $x_i$, then the trace of the action of $M$ on the irreducible representation of $GL_n(\mathbb{C})$ corresponding to the Young diagram $\lambda$ is $s_\lambda(x_i)$: the Schur function corresponding to $\lambda$ evaluated at the eigenvalues $x_i$. Note that this quantity is zero unless the Young diagram $\lambda$ has at most $n$ nonzero parts. This means we have a homomorphism $\Lambda \to R_n$ whose kernel has basis $s_\mu$ for Young diagrams $\mu$ with more than $n$ nonzero parts.
\end{remark}
We now discuss a bilinear form on $\Lambda$.
\begin{proposition}
The ring $\Lambda$ satisfies the following properties:
\begin{enumerate}
\item The Schur functions form a $\mathbb{Z}$-basis of $\Lambda$: $\Lambda = \mathbb{Z}\{s_\lambda \mid \mbox{$\lambda$ a Young diagram}\}$. In particular, there is a bilinear form $\langle -, - \rangle : \Lambda \otimes \Lambda \to \mathbb{Z}$ for which the Schur functions are orthonormal.
\item The adjoint to multiplication by $p_i$ is $\displaystyle i \frac{\partial}{\partial p_i}$ (where elements of $\Lambda$ are viewed as polynomials in the $p_i$ with possibly rational coefficients).
\item The adjoint to multiplication by $s_\mu$ (with respect to $\langle -,- \rangle$) is denoted $s_\mu^\perp$. The symmetric function $s_\mu^\perp(s_\lambda)$ is called a skew-Schur function, and denoted $s_{\lambda / \mu}$. It is nonzero if and only if $\mu_i \leq \lambda_i$ for all $i$.
\item Schur functions satisfy the following multiplication rule $\displaystyle s_\mu s_\nu = \sum_{\mu, \nu} c_{\mu, \nu}^\lambda s_\lambda$, where $c_{\mu, \nu}^\lambda$ are the Littlewood-Richardson coefficients (which are zero unless $|\mu| + |\nu| = |\lambda|$). They also satisfy $\displaystyle s_\lambda(x, y) = \sum_{\mu, \nu} c_{\mu, \nu}^\lambda s_\mu(x)s_\nu(y)$.
\item The identity $e_r(x, y) = \sum_{i=0}^r e_i(x)e_{r-i}(y)$ shows that $c_{\mu, \nu}^{1^r}$ is zero unless $\mu = 1^i$ and $\nu = 1^{r-i}$ for some $0 \leq i \leq r$, in which case it is equal to 1.
\item The Littlewood-Richardson coefficient $c_{\mu, r}^\lambda$ is zero unless the diagram of $\lambda$ can be obtained by adding $r$ cells to the diagram of $\mu$, with no two cells in the same column; this is the Pieri rule. Similarly, $c_{\mu, 1^r}^\lambda$ is zero unless the diagram of $\lambda$ can be obtained by adding $r$ cells to the diagram of $\mu$, with no two cells in the same row; this is the dual Pieri rule.
\end{enumerate}
\end{proposition}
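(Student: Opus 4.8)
The plan is to treat this proposition as an assembly of classical facts from Chapter~I of \cite{Macdonald} rather than as something to be proved from scratch; I record the logical skeleton I would write out. I would start from the Jacobi--Trudi identity $s_\lambda = \det(h_{\lambda_i - i + j})$ together with the algebraic independence of the $h_r$ (equivalently of the $e_r$, since $H(t)E(-t) = 1$). Expanding $s_\lambda$ in the monomial symmetric functions $m_\mu$ shows that the transition matrix is unitriangular with respect to the dominance order on partitions, which is exactly item~(1): the $s_\lambda$ form a $\mathbb{Z}$-basis of $\Lambda$, and one then \emph{defines} the bilinear form $\langle -, - \rangle$ by declaring that basis orthonormal.

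For item~(2) I would first establish $\langle p_\lambda, p_\mu \rangle = z_\lambda \delta_{\lambda \mu}$, by comparing the two standard expansions of the Cauchy kernel $\prod_{i,j}(1 - x_i y_j)^{-1}$, one in Schur functions and one in power sums; since the form is then diagonal in the $p_\lambda$ with these explicit constants, a one-line check on monomials in the variables $p_i$ shows that multiplication by $p_i$ is adjoint to $i\,\partial/\partial p_i$. For item~(3) I would take $s_\mu^\perp$ to be, by definition, the adjoint of multiplication by $s_\mu$, put $s_{\lambda/\mu} := s_\mu^\perp(s_\lambda)$, and read off $\langle s_{\lambda/\mu}, s_\nu \rangle = \langle s_\lambda, s_\mu s_\nu \rangle = c^\lambda_{\mu\nu}$; the criterion that $s_{\lambda/\mu} \neq 0$ iff $\mu_i \leq \lambda_i$ for all $i$ then follows from the description of $s_{\lambda/\mu}$ as a generating function over semistandard tableaux of skew shape $\lambda/\mu$.

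Item~(4) splits into the product rule $s_\mu s_\nu = \sum_\lambda c^\lambda_{\mu\nu} s_\lambda$, which is the Littlewood--Richardson rule and which I would simply cite, and the coproduct identity $s_\lambda(x, y) = \sum_{\mu,\nu} c^\lambda_{\mu\nu} s_\mu(x) s_\nu(y)$; the latter falls out for free, because the form on $\Lambda \otimes \Lambda$ is the tensor square of $\langle -, - \rangle$, the products $s_\mu(x) s_\nu(y)$ are an orthonormal basis, and $\langle s_\lambda(x,y), s_\mu(x) s_\nu(y) \rangle = \langle s_\lambda, s_\mu s_\nu \rangle$. Item~(5) is immediate from $E(t) = \prod_i(1 + x_i t)$: adjoining a second variable set gives $E_{\{x\} \cup \{y\}}(t) = E_{\{x\}}(t) E_{\{y\}}(t)$, i.e.\ $e_r(x,y) = \sum_{i=0}^{r} e_i(x) e_{r-i}(y)$, and comparing this with item~(4) together with $e_r = s_{1^r}$ forces $c^{1^r}_{\mu\nu}$ to vanish unless $(\mu,\nu) = (1^i, 1^{r-i})$, in which case it is $1$. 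Finally, item~(6), the Pieri rule $s_\mu h_r = \sum_\lambda s_\lambda$ summed over horizontal $r$-strips $\lambda/\mu$ and its conjugate $s_\mu e_r = \sum_\lambda s_\lambda$ summed over vertical $r$-strips, I would deduce either as the special cases $\nu = (r)$ and $\nu = 1^r$ of Littlewood--Richardson, or directly from the Gessel--Viennot non-intersecting lattice-path reading of the (dual) Jacobi--Trudi determinant. I do not anticipate a genuine obstacle: the only real care is in matching conventions --- the normalization of the Hall inner product, and the orientation (horizontal versus vertical strip) in the Pieri rule for $h$ versus $e$ --- with the Littlewood--Richardson rule being the single nontrivial theorem on which everything else rests.
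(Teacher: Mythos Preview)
Your proposal is correct and, in fact, considerably more detailed than the paper itself: the paper does not give a proof of this proposition at all. It is stated as background material, prefaced by the sentence ``All the material we use can be found in the first chapter of \cite{Macdonald},'' and is left unproved with an implicit citation. Your sketch of how to assemble these facts from Chapter~I of Macdonald is accurate and appropriately organized; the only comment is that you are doing more than the paper requires, since the authors treat this proposition purely as a statement of known results rather than as something to be argued.
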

There are two identities that will be important to us, which we now state.
\begin{proposition}
We have the following equations:
\begin{enumerate}
\item The following equality of series holds in a completion of $R_n \otimes R_n$ for each $n$, and therefore in a completion of $\Lambda$ (note that the homogeneous components of the right-hand side define elements of the inverse limits used to define the ring of symmetric functions):
\[
\sum_{\lambda} s_\lambda(x) s_\lambda(y) = \prod_{i,j} \frac{1}{1-x_iy_j}.
\]
This is called the Cauchy Identity.
\item Similarly, we have the Dual Cauchy Identity:
\[
\sum_{\lambda} s_\lambda(x) s_{\lambda^\prime}(y) = \prod_{i,j} (1+x_iy_j).
\]
Here, $\lambda^\prime$ is the Young diagram dual to $\lambda$.
\end{enumerate}
\end{proposition}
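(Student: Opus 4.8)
The plan is to prove the Cauchy Identity first and then to deduce the Dual Cauchy Identity from it by applying the standard involution $\omega\colon\Lambda\to\Lambda$ --- the graded ring automorphism with $\omega(e_r)=h_r$, equivalently $\omega(p_n)=(-1)^{n-1}p_n$ and $\omega(s_\lambda)=s_{\lambda'}$ --- to one of the two variable sets. All identities are understood degree-by-degree in the relevant completion of $\Lambda\otimes\Lambda$: in each fixed total degree only finitely many terms contribute, and $\omega$, being graded, is continuous, so ``applying $\omega$ in the $y$-slot'' is legitimate on the completion.

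For the Cauchy Identity, first rewrite the right-hand side by taking logarithms:
\[
\prod_{i,j}\frac{1}{1-x_iy_j}=\exp\Bigl(\sum_{n\ge1}\frac{p_n(x)\,p_n(y)}{n}\Bigr)=\sum_{\lambda}\frac{p_\lambda(x)\,p_\lambda(y)}{z_\lambda},
\]
where $z_\lambda=\prod_{i\ge1}i^{m_i}m_i!$ when $\lambda$ has $m_i$ parts equal to $i$; the last equality is just the multinomial expansion of the exponential. Next, using the adjointness of multiplication by $p_i$ and $i\,\partial/\partial p_i$, together with the fact that $\langle -,-\rangle$ is the form for which the $s_\lambda$ are orthonormal (so $\langle 1,1\rangle=\langle s_\emptyset,s_\emptyset\rangle=1$), an easy induction on the number of parts gives $\langle p_\lambda,p_\mu\rangle=z_\lambda\delta_{\lambda\mu}$. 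Thus $\{z_\lambda^{-1}p_\lambda\}$ is the basis of $\Lambda_{\mathbb{Q}}$ dual to $\{p_\lambda\}$, while $\{s_\lambda\}$ is self-dual. Finally invoke the elementary fact that the sum $\sum_\lambda u_\lambda(x)v_\lambda(y)$ is the same for every pair of mutually dual homogeneous bases $\{u_\lambda\},\{v_\lambda\}$ of $\Lambda_{\mathbb{Q}}$: if $u'_\mu=\sum_\lambda A_{\mu\lambda}u_\lambda$ then duality forces $v'_\mu=\sum_\lambda (A^{-1})_{\lambda\mu}v_\lambda$, and the product $A^{-1}A$ cancels. Applying this to the pair $(p_\lambda,z_\lambda^{-1}p_\lambda)$ versus $(s_\lambda,s_\lambda)$ gives $\sum_\lambda z_\lambda^{-1}p_\lambda(x)p_\lambda(y)=\sum_\lambda s_\lambda(x)s_\lambda(y)$, which together with the display above is the Cauchy Identity. (A bijective alternative is to invoke the RSK correspondence and the combinatorial formula $s_\lambda(x)=\sum_T x^T$ over semistandard tableaux $T$ of shape $\lambda$.)

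For the Dual Cauchy Identity, apply $\omega$ in the $y$-variables to both sides of the Cauchy Identity. The left-hand side becomes $\sum_\lambda s_\lambda(x)\,\omega(s_\lambda)(y)=\sum_\lambda s_\lambda(x)s_{\lambda'}(y)$. For the right-hand side, $\log\prod_{i,j}(1+x_iy_j)=\sum_{n\ge1}\frac{(-1)^{n-1}}{n}p_n(x)p_n(y)$, hence $\prod_{i,j}(1+x_iy_j)=\exp\bigl(\sum_{n\ge1}\frac{(-1)^{n-1}}{n}p_n(x)p_n(y)\bigr)$; since $\omega(p_n)=(-1)^{n-1}p_n$, this is precisely $\omega$ in the $y$-slot applied to $\exp\bigl(\sum_{n\ge1}\frac1n p_n(x)p_n(y)\bigr)=\prod_{i,j}(1-x_iy_j)^{-1}$. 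The two images therefore coincide, giving $\sum_\lambda s_\lambda(x)s_{\lambda'}(y)=\prod_{i,j}(1+x_iy_j)$.

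The only step that is not pure generating-function formalism is the identity $\omega(s_\lambda)=s_{\lambda'}$, which I would import from the Jacobi--Trudi identities in their $h$- and $e$-forms (standard, in \cite{Macdonald}, Ch.~I); this is the input I expect to be the genuine ``obstacle'', in that everything else follows from facts already recorded in the excerpt plus elementary manipulation. If one prefers a fully self-contained argument, the cleanest substitute is the dual RSK correspondence between $0$--$1$ matrices and pairs of semistandard tableaux of conjugate shapes, which proves the Dual Cauchy Identity directly and in parallel with the bijective proof of the Cauchy Identity. The remaining points --- convergence in the completion and the continuity of $\omega$ --- are routine bookkeeping.
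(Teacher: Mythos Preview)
Your proof is correct and follows the standard route found in Macdonald (power-sum expansion of the Cauchy kernel, orthogonality of $p_\lambda$'s, then the involution $\omega$ for the dual version). The paper itself does not prove this proposition: it is stated as background material with the blanket attribution ``All the material we use can be found in the first chapter of \cite{Macdonald}'', so there is no paper-proof to compare against beyond noting that your argument is the one Macdonald gives.
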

There is another operation on symmetric functions called plethysm.
\begin{definition}
Given symmetric functions $f$ and $g$ which are sums of monomials in the variables $x_i$ with coefficients in $\mathbb{Z}_{\geq 0}$, the plethysm of $g$ with $f$ is a symmetric function denoted $g[f]$. It may be calculated in the following way. Express $f(x_1, x_2, \ldots )$ as a sum of monomials (repeated according to their multiplicity) $f = \sum_i x_1^{\alpha_1^{(i)}}x_2^{\alpha_2^{(i)}} \cdots $. Then $g[f]$ is the symmetric function obtained by evaluating $g$ on the variable set given by the monomials $x_1^{\alpha_1^{(i)}}x_2^{\alpha_2^{(i)}} \cdots $. It immediately follows that the map $\Lambda \to \Lambda$ defined by $g \mapsto g[f]$ is an algebra homomorphism (but this is not true for $f \mapsto g[f]$).
\end{definition}
\begin{remark}
There is a way of generalising the above definition to $f$ and $g$ for which are not necessarily a positive (or even integral, if one is prepared to base change $\Lambda$) sum of monomials. The most general definition is the one given in Chapter 1, Section 8 of \cite{Macdonald}.
\end{remark}
\begin{remark}
Let $f = \sum_{\mu} m_\mu s_{\mu}$ be the character of a representation $V$ of $GL_n(\mathbb{C})$ (where $n$ is taken to be sufficiently large), so $m_\mu \in \mathbb{Z}_{\geq 0}$, and all but finitely many $m_\mu$ are zero. Thus, $f$ encodes a homomorphism $\varphi_f: GL_n(\mathbb{C}) \to GL(V)$. Similarly, fix $\displaystyle g = \sum_{\nu} n_\nu s_\nu$ (with the same conditions on $n_\nu$ as on $m_\mu$), which uniquely defines a representation $W$ of $GL(V) = GL_{\dim(V)}(\mathbb{C})$, encoding a homomorphism $\varphi_g : GL(V) \to GL(W)$. Then, $W$ is a representation of $GL_n(\mathbb{C})$ via the composition $\varphi_g \circ \varphi_f$:
\[
GL_n(\mathbb{C}) \xrightarrow{\varphi_f} GL(V) \xrightarrow{\varphi_g} GL(W).
\]
The character of this representation is the plethysm $g[f]$. The value of $n$ used in this construction does not affect $g[f]$, provided it is large enough (e.g. $n=\deg(f) \deg(g)$ will suffice).
\end{remark}

\begin{example}
We show that $\displaystyle e_1 = \sum_{i} x_i$ is a two-sided identity for plethysm. Note that by definition, $e_1[f]$ recovers the sum of the monomials of $f$, namely $f$ itself. On the other hand, $f[e_1]$ is the evaluation of $f$ on the variable set $\{x_i\}$ (the monomials of $e_1$), which again is $f$ itself. This is consistent with the formulation in terms of representations of general linear groups, where $\varphi_{e_1}$ represents the identity map $GL_n(\mathbb{C}) \to GL_n(\mathbb{C})$ (for any $n$).
\end{example}

\begin{remark}\label{power_sum_plethysm_remark}
For power-sum symmetric functions $p_r$, plethysm has some useful properties. In particular, $p_r[f] = f[p_r]$ for arbitrary $f$, because both sides are equal to the symmetric function obtained by multiplying the exponents of all monomials of $f$ by $r$. As a special case, we obtain $p_{r_1}[p_{r_2}] = p_{r_2}[p_{r_1}] = p_{r_1r_2}$.
\end{remark}
Ultimately, the result we need about plethysm is the following.
\begin{theorem} \label{plethysm_theorem}
We have the following equation:
\[
h_r[h_2] = \sum_{|\lambda| = r} s_{2\lambda},
\]
where, if $\lambda = (\lambda_1, \lambda_2, \ldots, \lambda_k)$, then $2\lambda = (2\lambda_1, 2\lambda_2, \ldots, 2\lambda_k)$.
\end{theorem}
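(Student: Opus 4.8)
The plan is to prove the identity at the level of the completed ring $\hat{\Lambda}$ in the form
\[
\sum_{r \ge 0} h_r[h_2] \;=\; \sum_{\lambda} s_{2\lambda},
\]
and then to read off Theorem~\ref{plethysm_theorem} by comparing homogeneous components: each $h_s[h_2]$ is homogeneous of degree $2s$, so the degree-$2r$ part of the left-hand side is exactly $h_r[h_2]$, while the degree-$2r$ part of the right-hand side is $\sum_{|\lambda| = r} s_{2\lambda}$. To evaluate the left-hand side I would use the definition of plethysm directly: $h_r[h_2]$ is $h_r$ evaluated on the (multi)set of monomials of $h_2$, namely $\{x_i x_j : i \le j\}$, so the generating function $\sum_r h_r = \prod_k (1-z_k)^{-1}$ for the complete symmetric functions gives
\[
\sum_{r \ge 0} h_r[h_2] \;=\; \prod_{i \le j} \frac{1}{1 - x_i x_j}.
\]
(Alternatively, since $g \mapsto g[h_2]$ is an algebra homomorphism, applying it to $\sum_r h_r = \exp\!\big(\sum_k p_k/k\big)$ and using $p_k[h_2] = h_2[p_k] = \tfrac12(p_k^2 + p_{2k})$ produces the same series in the form $\exp\!\big(\sum_k \tfrac{p_k^2 + p_{2k}}{2k}\big)$.)

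What remains is Littlewood's identity $\prod_{i \le j}(1 - x_i x_j)^{-1} = \sum_{\lambda} s_{2\lambda}$, which is classical and appears in the first chapter of \cite{Macdonald}; if a self-contained argument is wanted I would prove it via the bialternant formula in $n$ variables. Multiplying both sides by the Vandermonde determinant $a_\delta = \det(x_i^{n-j})$ and writing $s_{2\mu} = a_{2\mu + \delta}/a_\delta$, the right-hand side becomes $\sum_{\mu} \det(x_i^{2\mu_j + n - j})$, and antisymmetrizing reduces the computation to the telescoping geometric sum
\[
\sum_{\mu_1 \ge \cdots \ge \mu_n \ge 0} \;\prod_{i=1}^{n} y_i^{2\mu_i} \;=\; \prod_{k=1}^{n} \frac{1}{1 - (y_1 \cdots y_k)^2}.
\]
One is then left to identify $\sum_{w \in S_n} \mathrm{sgn}(w)\,\big(\prod_i x_{w(i)}^{\,n-i}\big)\prod_{k=1}^{n}\big(1 - (x_{w(1)}\cdots x_{w(k)})^2\big)^{-1}$ with $a_\delta \prod_{i \le j}(1 - x_i x_j)^{-1}$. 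This Weyl-denominator-type identity is the main obstacle; I would establish it by induction on $n$ (clearing denominators and matching residues at the relevant poles), and then pass to the limit $n \to \infty$.

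A second route to Littlewood's identity, avoiding determinants, is to verify it coefficient by coefficient: compute $\langle \prod_{i \le j}(1 - x_i x_j)^{-1}, s_\lambda \rangle$ using that the adjoint of multiplication by $p_i$ is $i\,\partial/\partial p_i$, and check that it equals $1$ when every part of $\lambda$ is even and $0$ otherwise; here the power-sum form $\exp\!\big(\sum_k \tfrac{p_k^2 + p_{2k}}{2k}\big)$ obtained above is convenient, since the pairing reduces to a finite Wick-type expansion. Either way, once Littlewood's identity is available, the comparison of degree-$2r$ components from the first paragraph completes the proof.
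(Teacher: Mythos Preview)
Your proposal is correct. The paper itself offers no argument for this theorem, merely citing Macdonald (Chapter~I, \S8, Example~6) and Stanley (Example~A2.9); your reduction---using the plethysm definition to obtain $\sum_{r\ge0} h_r[h_2]=\prod_{i\le j}(1-x_ix_j)^{-1}$ and then invoking Littlewood's classical identity $\prod_{i\le j}(1-x_ix_j)^{-1}=\sum_\lambda s_{2\lambda}$ before comparing homogeneous components---is exactly the standard argument those references contain, so you are supplying the proof the paper omits rather than taking a different route.
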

\begin{proof}
This can be found in Chapter 1, Section 8, Example 6 of \cite{Macdonald}. Alternatively, see Example $A2.9$ of \cite{Stanley}.
\end{proof}
We now introduce a linear operator which will play an important role in what follows, and prove some properties that it satisfies.
\begin{definition}
Let $L$ be the linear operator on the completion of $\Lambda$ defined by multiplication by $\displaystyle \prod_{i \leq j}\frac{1}{1 + x_ix_j}$.
\end{definition}

\begin{proposition} \label{adjoint_formula_prop}
The adjoint of $L$ with respect to $\langle -, - \rangle$ is:
\[
L^\dagger =  \sum_{\mu}(-1)^{|\mu|} s_{2\mu}^\perp.
\]
\end{proposition}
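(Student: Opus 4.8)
The plan is to compute $\langle Lf, g\rangle$ for arbitrary $f, g \in \Lambda$ and identify the result as $\langle f, L^\dagger g\rangle$ with $L^\dagger$ as claimed. First I would rewrite the defining product of $L$ using the Schur expansion of the dual-Cauchy-type kernel $\prod_{i \leq j}(1+x_ix_j)^{-1}$. Since $\prod_{i\leq j}(1+x_ix_j) = \sum_\mu s_{2\mu}(x)$ by the plethysm identity $\sum_r h_r[h_2] = \sum_\mu s_{2\mu}$ (Theorem \ref{plethysm_theorem}, summing over $r$) — or equivalently by a standard specialization of the dual Cauchy identity — inverting the generating function gives $\prod_{i\leq j}(1+x_ix_j)^{-1} = \sum_\mu (-1)^{|\mu|} s_{2\mu}(x)$. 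The sign here comes from the fact that the generating functions $\sum_r h_r[h_2] t^r$ and its inverse $\sum_r e_r[h_2](-t)^r$ (using $H(t)E(-t)=1$ applied after the plethysm homomorphism $-[h_2]$) are related exactly by the substitution producing $(-1)^{|\mu|}$ on each $s_{2\mu}$; I would spell this out as the one genuinely substantive computational step.

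Granting that expansion, we have $L = \sum_\mu (-1)^{|\mu|} (\text{multiplication by } s_{2\mu})$. The adjoint of multiplication by $s_{2\mu}$ with respect to $\langle -,-\rangle$ is by definition $s_{2\mu}^\perp$ (see the proposition on the bilinear form, item 3 of its statement). Taking adjoints termwise — which is legitimate in the completion because for fixed $g$ and fixed degree, only finitely many $\mu$ contribute to $\langle f, s_{2\mu}^\perp g\rangle$ for each homogeneous $f$, since $s_{2\mu}^\perp g = 0$ once $|2\mu| > \deg g$ — yields
\[
L^\dagger = \sum_\mu (-1)^{|\mu|} s_{2\mu}^\perp,
\]
which is the assertion.

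The main obstacle is not the formal adjoint argument but justifying the manipulations in the completion $\hat\Lambda$: $L$ is multiplication by an element of $\hat\Lambda$, not $\Lambda$, so one must check that $\langle Lf, g\rangle$ is well-defined and that the pairing $\langle -,-\rangle$ extends appropriately. I would handle this by noting that $\langle -,-\rangle$ restricted to a fixed total degree is a perfectly finite-dimensional pairing, that $L$ preserves the grading-completion structure (it is a sum of operators raising degree by $2|\mu|$, graded pieces well-defined), and that both $Lf$ and $\sum_\mu(-1)^{|\mu|}s_{2\mu}^\perp g$ have well-defined homogeneous components; then the identity $\langle Lf, g\rangle = \langle f, L^\dagger g\rangle$ can be checked one degree at a time, where everything is a finite sum. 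A secondary point to verify carefully is the sign $(-1)^{|\mu|}$ versus $(-1)^{\text{something else}}$; cross-checking against the low-degree case via the worked example $h_2 = \tfrac{p_1^2+p_2}{2}$ and $\prod_{i\leq j}(1+x_ix_j)^{-1} = 1 - h_2 + \cdots$ confirms the coefficient of $s_{(2)}$ is $-1$, matching $(-1)^{|\mu|}$ at $\mu = (1)$.
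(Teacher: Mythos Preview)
Your overall strategy is the same as the paper's: expand the multiplier defining $L$ in Schur functions via Theorem~\ref{plethysm_theorem}, then take adjoints termwise. The paper does this in one line, writing
\[
\prod_{i\leq j}\frac{1}{1+x_ix_j}=H(-1)[h_2]=\sum_{r\geq 0}(-1)^r h_r[h_2]=\sum_\mu(-1)^{|\mu|}s_{2\mu},
\]
and then observing that the adjoint of multiplication by $s_{2\mu}$ is $s_{2\mu}^\perp$. Your completion/grading discussion is fine and more careful than the paper's, but it is not where the content lies.

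However, your intermediate claim $\prod_{i\leq j}(1+x_ix_j)=\sum_\mu s_{2\mu}(x)$ is false. Theorem~\ref{plethysm_theorem} summed over $r$ gives $\sum_r h_r[h_2]=H(1)[h_2]=\prod_{i\leq j}(1-x_ix_j)^{-1}$, not $\prod_{i\leq j}(1+x_ix_j)$; the latter is $E(1)[h_2]=\sum_r e_r[h_2]$, whose Schur expansion is not $\sum_\mu s_{2\mu}$. (A quick sanity check: one side is a polynomial in finitely many variables, the other is a genuine power series.) Your subsequent ``inversion'' step, invoking $H(t)E(-t)=1$ after plethysm, would relate $\sum_r h_r[h_2]t^r$ to $\sum_r e_r[h_2](-t)^r$, but since you do not have a Schur expansion for $e_r[h_2]$ this does not produce the claimed sign substitution on $s_{2\mu}$. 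The fix is simply to bypass the detour: set $t=-1$ directly in $\sum_r h_r[h_2]t^r=\prod_{i\leq j}(1-x_ix_jt)^{-1}$ to obtain $\prod_{i\leq j}(1+x_ix_j)^{-1}=\sum_\mu(-1)^{|\mu|}s_{2\mu}$ in one step, exactly as the paper does.
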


\begin{proof}
We recognise the product defining $L$ as the generating function of complete symmetric functions evaluated at $-1$, with variable set $\{x_{i}x_{j} \}_{i \leq j}$ (these are the monomials in $h_2$); the degree $2r$ component of this sum is precisely what is computed in Theorem \ref{plethysm_theorem}. Thus,
\[
\prod_{i \leq j}\frac{1}{1 + x_ix_j} = H(-1)[h_2] = \sum_{r \geq 0}(-1)^r h_r[h_2] = \sum_{r \geq 0} (-1)^r \sum_{|\mu| = r} s_{2 \mu} = \sum_{\mu}(-1)^{|\mu|} s_{2\mu}.
\]
Noting that the adjoint of multiplication by $s_{2\mu}$ is $s_{2\mu}^\perp$, the proposition follows.
\end{proof}

\begin{notation}\label{Not: Branching Coefficient}
Let $\phi_{2}:  GL_{n} \to GL_{n(n+1)/2}$ be the symmetric square representation of $GL_n$ and $\phi_{1^r}$ be the $r$-th antisymmetric power representation of $GL_{n(n+1)/2}$. Then $\phi_{1^r}\phi_{2}$ is a representation of $GL_n$. The multiplicity of the irreducible representation of $GL_n$ with highest weight $\lambda$ in $\phi_{1^r}\phi_{2}$ is denoted by $b_{n,r,\lambda}$.
We define $\displaystyle b_{r,\lambda}= \lim_{n \to \infty} b_{n,r,\lambda}$. Then
\begin{align}
e_r[h_2]&=\sum_{\lambda} b_{r,\lambda} s_\lambda  ;\\
\label{Equ: L-1}
L^{-1}&=\prod_{i \leq j} (1 + x_ix_j)=\sum_{r\geq 0} e_r[h_2] =\sum_{r\geq 0,\lambda} b_{r,\lambda} s_\lambda.
\end{align}
\end{notation}

\begin{lemma} \label{power_sum_L_lemma}
Let $\theta_i = \frac{1 + (-1)^i}{2}$, so that $\theta_i$ is equal to $0$ when $i$ is odd, and equal to $1$ when $i$ is even.
When expressed in terms of power-sum symmetric functions, $L$ has the following form:
\[
L = \exp \left( \sum_i \frac{(-1)^i p_i^2 + 2(-1)^{i/2} \theta_i p_{i}}{2i} \right).
\]
\end{lemma}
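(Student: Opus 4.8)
The plan is to compute $\log L$ as a symmetric function and recognise the answer in power sums. Since $L$ is multiplication by $\prod_{i \leq j}(1 + x_ix_j)^{-1}$, and multiplication operators on $\hat{\Lambda}$ commute with one another, it suffices to show that $-\log \prod_{i \leq j}(1 + x_ix_j)$ equals the stated exponent; exponentiating then gives the lemma. First I would split the product as $\prod_{i \leq j}(1 + x_ix_j) = \prod_{i < j}(1 + x_ix_j) \cdot \prod_i (1 + x_i^2)$ and expand each logarithm via $\log(1 + t) = \sum_{k \geq 1} \frac{(-1)^{k-1}}{k} t^k$, everything interpreted in the graded completion, to get
\[
-\log \prod_{i \leq j}(1 + x_ix_j) = \sum_{k \geq 1} \frac{(-1)^k}{k}\left( \sum_{i < j}(x_ix_j)^k + \sum_i x_i^{2k} \right).
\]

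Next I would convert the two inner sums to power sums using $\sum_i x_i^{2k} = p_{2k}$ and $\sum_{i < j}(x_ix_j)^k = \tfrac12\big((\sum_i x_i^k)^2 - \sum_i x_i^{2k}\big) = \tfrac12(p_k^2 - p_{2k})$. Substituting, the bracket becomes $\tfrac12 p_k^2 + \tfrac12 p_{2k}$, since the $-\tfrac12 p_{2k}$ coming from the off-diagonal sum cancels half of the $p_{2k}$ coming from the diagonal, so
\[
\log L = \sum_{k \geq 1} \frac{(-1)^k}{2k} p_k^2 + \sum_{k \geq 1} \frac{(-1)^k}{2k} p_{2k}.
\]

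Finally I would reindex the second sum by $i = 2k$, so that $\frac{(-1)^k}{2k} = \frac{(-1)^{i/2}}{i}$ and the sum runs over even $i$; I rewrite it as $\sum_{i \geq 1} \frac{(-1)^{i/2}\theta_i}{i} p_i = \sum_{i \geq 1} \frac{2(-1)^{i/2}\theta_i p_i}{2i}$, where the factor $\theta_i$ kills the odd terms and also absorbs the fact that $(-1)^{i/2}$ is only literally meaningful for even $i$. The first sum is simply $\sum_{i \geq 1} \frac{(-1)^i p_i^2}{2i}$. Adding these two produces exactly $\sum_i \frac{(-1)^i p_i^2 + 2(-1)^{i/2}\theta_i p_i}{2i}$, and exponentiating completes the proof. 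I do not anticipate a genuine obstacle here: the content is entirely the two elementary power-sum identities above together with careful tracking of signs and the reindexing $i = 2k$; the only point requiring a word of care is the convergence, which is automatic in the graded completion, and the reading of $(-1)^{i/2}\theta_i$ as $0$ for odd $i$.
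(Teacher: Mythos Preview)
Your argument is correct. The key computation $\sum_{i \leq j}(x_ix_j)^k = \tfrac12(p_k^2 + p_{2k})$ is exactly right, and the reindexing at the end is handled carefully.

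The route differs from the paper's, though the two converge quickly. The paper does not split the product into diagonal and off-diagonal factors; instead it invokes the plethystic description $L = H(-1)[h_2]$ already used in the proof of the preceding proposition, writes $H(-1) = \exp\bigl(\sum_i (-1)^i p_i / i\bigr)$ and $h_2 = (p_1^2 + p_2)/2$, and then uses the identity $p_i[f] = f[p_i]$ to turn $p_i[h_2]$ into $(p_i^2 + p_{2i})/2$. This lands on the same intermediate expression $\sum_k (-1)^k(p_k^2 + p_{2k})/(2k)$ that you obtain, after which the reindexing step is identical. Your approach is more elementary and self-contained, needing only the Taylor expansion of $\log(1+t)$ and the standard identity for $\sum_{i<j}(x_ix_j)^k$; the paper's approach is slightly slicker in that it recycles the plethysm framework already set up, and makes the structural reason for the formula (that $L$ is a plethystic evaluation of a generating series) more visible.
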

\begin{proof}
We write $L = H(-1)[h_2]$ (as in the proof of Proposition \ref{adjoint_formula_prop}), where we express $H(-1)$ and $h_2$ in terms of power-sum symmetric functions. We use Remark \ref{power_sum_plethysm_remark} to manipulate the plethysm:
\begin{eqnarray*}
L &=& \exp \left( \sum_i \frac{(-1)^ip_i}{i} \right)[\frac{p_1^2 + p_2}{2}] \\
&=& \exp \left( \sum_i \frac{(-1)^ip_i[\frac{p_1^2 + p_2}{2}] }{i} \right) \\
&=& \exp \left( \sum_i \frac{(-1)^i\frac{p_1^2 + p_2}{2}[p_i] }{i} \right) \\
&=& \exp \left( \sum_i \frac{(-1)^i\frac{p_i^2 + p_{2i}}{2}}{i} \right) \\
&=& \exp \left( \sum_i \frac{(-1)^i(p_i^2 + p_{2i})}{2i} \right).
\end{eqnarray*}
We rearrange the sum so that all instances of $p_i$ occur in the $i$-th summand. This means moving the term $(-1)^i p_{2i}/2i$ from the $i$-th summand to the $2i$-th summand. Upon noting that only even index summands obtain a contribution in this way, we obtain the stated formula.
\end{proof}

\begin{proposition} \label{comultiplication_L_prop}
Consider $\Lambda \otimes \Lambda$ as the set of symmetric functions in two sets of variables $\{x_i^{(1)}\}$ and $\{x_i^{(2)}\}$. Suppose that the symmetric function $f$ satisfies $f(x^{(1)}, x^{(2)}) = \sum_i g_i(x^{(1)})h_i(x^{(2)})$. Then, we have the following equation:
\[
\left( \sum_\lambda s_\lambda(x^{(1)})s_{\lambda^\prime}(x^{(2)})\right) L(f)(x^{(1)}, x^{(2)}) = L(g_i)(x^{(1)}) L(h_i)(x^{(2)}).
\]
\end{proposition}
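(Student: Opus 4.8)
The plan is to exploit the way the infinite product defining $L$ behaves when the variable set is split into two groups. Write $\{x_i\}$ for the union $\{x_i^{(1)}\} \cup \{x_i^{(2)}\}$. First I would sort each unordered pair $\{i,j\}$ with $i \leq j$ according to whether both indices belong to the first group, both to the second, or one to each; this gives the factorization
\[
\prod_{i \leq j} \frac{1}{1 + x_i x_j} = \left( \prod_{i \leq j} \frac{1}{1 + x_i^{(1)} x_j^{(1)}} \right)\left( \prod_{i \leq j} \frac{1}{1 + x_i^{(2)} x_j^{(2)}} \right)\left( \prod_{i,j} \frac{1}{1 + x_i^{(1)} x_j^{(2)}} \right),
\]
valid in the appropriate completion of $\Lambda \otimes \Lambda$. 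The first two factors on the right are precisely the series defining $L$ in the variable sets $x^{(1)}$ and $x^{(2)}$ respectively, so multiplication by the left-hand product equals the composite $L_{12} \circ (L \otimes L)$ of operators on the completion of $\Lambda \otimes \Lambda$, where $L \otimes L$ applies $L$ within each group and $L_{12}$ denotes multiplication by the cross term $\prod_{i,j}(1 + x_i^{(1)} x_j^{(2)})^{-1}$.

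The second ingredient is the Dual Cauchy Identity, which identifies the cross term: $\prod_{i,j}(1 + x_i^{(1)} x_j^{(2)}) = \sum_\lambda s_\lambda(x^{(1)}) s_{\lambda^\prime}(x^{(2)})$, so that $L_{12}^{-1}$ is multiplication by $\sum_\lambda s_\lambda(x^{(1)}) s_{\lambda^\prime}(x^{(2)})$.

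With these in hand the statement is bookkeeping. Apply the operator identity to $f = \sum_i g_i(x^{(1)}) h_i(x^{(2)})$: since multiplication by a series in the $x^{(1)}$ variables commutes past the $x^{(2)}$ variables (and conversely), $(L \otimes L)(f) = \sum_i L(g_i)(x^{(1)}) L(h_i)(x^{(2)})$, and hence $L(f) = L_{12} \sum_i L(g_i)(x^{(1)}) L(h_i)(x^{(2)})$. Multiplying through by $L_{12}^{-1} = \sum_\lambda s_\lambda(x^{(1)}) s_{\lambda^\prime}(x^{(2)})$ gives the claimed formula.

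The one place that genuinely needs care --- and the step I expect to be the main obstacle --- is the handling of completions: one must verify that each of the three infinite products, and their product, defines an element of the relevant completion of $\Lambda \otimes \Lambda$, that the cross term is invertible there (which the Dual Cauchy Identity supplies, its homogeneous components being honest symmetric functions), and that ``applying $L$'' to an element of $\Lambda \otimes \Lambda$ is consistent with the operator-level factorization $L = L_{12} \circ (L \otimes L)$. Once the formal-series manipulations are justified in a fixed completion, no further computation is required.
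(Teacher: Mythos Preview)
Your proposal is correct and follows essentially the same route as the paper: factor the product defining $L$ on the combined variable set into the two ``diagonal'' pieces $L_{\{x^{(1)}\}}$, $L_{\{x^{(2)}\}}$ and the cross term $\prod_{i,j}(1+x_i^{(1)}x_j^{(2)})^{-1}$, then invoke the Dual Cauchy Identity to rewrite the inverse of the cross term as $\sum_\lambda s_\lambda(x^{(1)})s_{\lambda'}(x^{(2)})$. The paper's proof stops at the operator identity and declares it equivalent to the proposition; your additional bookkeeping with $f=\sum_i g_i\otimes h_i$ and your remarks on completions are more explicit than the paper's treatment, but add nothing new.
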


\begin{proof}
In the definition of $L$ (considered to have variable set $\{x_i^{(1)}\} \cup \{ x_i^{(2)}\}$), products of pairs of variables take one of three forms: either both variables come from $\{x_i^{(1)}\}$, or both variables come from $\{x_i^{(2)}\}$, or one variable comes from each. Giving $L$ a subscript to show its variable set, we obtain:
\[
L_{\{x_i^{(1)}\} \cup \{ x_i^{(2)}\}} = \prod_{i_1 \leq i_2} \frac{1}{1 + x_{i_1}^{(1)}x_{i_2}^{(1)}} \prod_{i_1 \leq i_2} \frac{1}{1 + x_{i_1}^{(2)}x_{i_2}^{(2)}} \prod_{i_1, i_2} \frac{1}{1 + x_{i_1}^{(1)}x_{i_2}^{(2)}}.
\]
Moving the last factor to the left-hand side, and using the Dual Cauchy Identity,
\[
\left( \sum_\lambda s_\lambda(x^{(1)})s_{\lambda^\prime}(x^{(2)})\right) L_{\{x_i^{(1)}\} \cup \{ x_i^{(2)}\}} = L_{\{x_i^{(1)}\}}L_{\{x_i^{(2)}\}}.
\]
This is equivalent to the statement of the proposition.
\end{proof}

\subsection{Characters, Generating Functions and Fusion Rules}

In this section, we recall some properties of the Grothendieck ring $\mathcal{G}$, and then study its structure using symmetric functions.
Recall that $\mathcal{G}$ has basis $\{Y_\lambda\}$ indexed by Young diagrams.

\begin{notation}
By Schur-Weyl duality, we obtain a ring isomorphism $\Phi: \mathcal{G} \to \Lambda$, such that $\Phi(Y_{\lambda})=s_{\lambda}$. Moreover,
\[
Y_{\mu}Y_{\nu}=\sum_{\lambda} c_{\mu, \nu}^{\lambda}Y_{\lambda},
\]
where $c_{\mu, \nu}^{\lambda}$ are the Littlewood-Richardson coefficients.
\end{notation}

Recall that $s_{\lambda/2\mu}$ is a skew-Schur function, and $2\mu$ is the Young diagram obtained by doubling each part of $\mu$.

\begin{definition}\label{Def: Iso}
By Theorem \ref{Thm: ring iso X}, we define a ring isomorphism $\Phi: \mathcal{G} \to \Lambda$, such that
\[
\Phi(Y_{\lambda})=s_{\lambda}, ~\forall~ \lambda.
\]
In particular,
\[
\Phi(X_{1^r})=\Phi(Y_{1^r})=s_{1^r}, ~\forall~ r\geq 0.
\]
\end{definition}

\begin{theorem}\label{Thm: Main1}
For any Young diagram $\lambda$, we call $\Phi(X_\lambda)$ the character of $X_{\lambda}$ in $\mathcal{G}$. Then
\begin{align}
\Phi(X_\lambda) &= L^\dagger s_\lambda = \sum_{\mu} (-1)^{|\mu|} s_{\lambda/2\mu} \;; \\
\label{Equ: X=Y+2}
X_{\lambda}&=\sum_{\substack{\mu,\nu \\  2|\mu| +|\nu|= |\lambda|}}  (-1)^{|\mu|} c_{2\mu,\nu}^{\lambda} Y_{\nu} \;.
\end{align}
\end{theorem}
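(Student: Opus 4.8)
\emph{Reduction.} The theorem reduces to the single claim that $\Phi(X_\lambda)=L^\dagger s_\lambda$ for every Young diagram $\lambda$. Indeed, the identity $L^\dagger s_\lambda=\sum_\mu(-1)^{|\mu|}s_{\lambda/2\mu}$ is exactly Proposition~\ref{adjoint_formula_prop} together with $s_{2\mu}^\perp s_\lambda=s_{\lambda/2\mu}$, and Equation~\eqref{Equ: X=Y+2} then follows by expanding each $s_{\lambda/2\mu}=\sum_\nu c_{2\mu,\nu}^\lambda s_\nu$ in the Schur basis and applying $\Phi^{-1}$, which sends $s_\nu\mapsto Y_\nu$. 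So the plan is to prove $\Phi(X_\lambda)=L^\dagger s_\lambda$ by introducing the $\mathbb{Z}$-linear map $\Psi\colon\mathcal{G}\to\Lambda$ defined on the basis by $\Psi(X_\lambda):=L^\dagger s_\lambda$ and showing $\Psi=\Phi$. Since Proposition~\ref{adjoint_formula_prop} gives $L^\dagger s_\lambda=s_\lambda+(\text{terms of strictly smaller degree})$, the elements $L^\dagger s_\lambda$ form a $\mathbb{Z}$-basis of $\Lambda$, so $\Psi$ is a linear isomorphism with $\Psi(1)=L^\dagger 1=1$; moreover $\Psi(X_{1^r})=L^\dagger s_{1^r}=s_{1^r}=e_r$, because $s_{1^r/2\mu}=0$ unless $\mu=\emptyset$, while $\Phi(X_{1^r})=\Phi(Y_{1^r})=s_{1^r}$ by \eqref{Equ: Yn=Xn} and Definition~\ref{Def: Iso}. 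Thus $\Psi$ and $\Phi$ agree on the polynomial generators of $\mathcal{G}$ (Theorem~\ref{Thm: ring iso X}), and it remains only to show that $\Psi$ is a ring homomorphism. By the standard argument that $\{z\in\mathcal{G}:\Psi(zw)=\Psi(z)\Psi(w)\text{ for all }w\}$ is a unital subring, it suffices to prove $\Psi(X_{1^r}X_\mu)=e_r\,\Psi(X_\mu)$ for all Young diagrams $\mu$ and all $r$.

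\emph{Bringing in the column-fusion rule.} By Theorem~\ref{Thm: fusion with column} together with the Pieri and dual Pieri rules, $X_{1^r}X_\mu=\sum_\lambda R_\lambda X_\lambda$ with $R_\lambda=\sum_{i=0}^r\sum_\nu c_{\nu,(i)}^\mu c_{\nu,1^{r-i}}^\lambda$. Writing $c_{\nu,(i)}^\mu=\langle h_i^\perp s_\mu,s_\nu\rangle$ and $c_{\nu,1^{r-i}}^\lambda=\langle e_{r-i}^\perp s_\lambda,s_\nu\rangle$ and using orthonormality of the Schur functions, $R_\lambda=\sum_i\langle h_i^\perp s_\mu,\,e_{r-i}^\perp s_\lambda\rangle=\langle\sum_{i=0}^r e_{r-i}(h_i^\perp s_\mu),\,s_\lambda\rangle$, so that $\sum_\lambda R_\lambda s_\lambda=\sum_{i=0}^r e_{r-i}(h_i^\perp s_\mu)$. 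Hence the required identity is
\[
L^\dagger\!\left(\sum_{i=0}^r e_{r-i}\,(h_i^\perp s_\mu)\right)=e_r\cdot(L^\dagger s_\mu)\qquad\text{for all }\mu,\ r.
\]
Since the $s_\mu$ form a basis, I would package all $r$ using a formal variable $t$ and read this as the operator identity $L^\dagger\circ M_{E(t)}\circ H^\perp(t)=M_{E(t)}\circ L^\dagger$ on $\Lambda$, where $M_g$ denotes multiplication by $g$, $E(t)=\sum_r e_rt^r$ and $H^\perp(t)=\sum_i h_i^\perp t^i$. Taking adjoints with respect to $\langle-,-\rangle$ converts it to $M_{H(t)}\circ E^\perp(t)\circ L=L\circ E^\perp(t)$, where $E^\perp(t)=\sum_r e_r^\perp t^r$ and $L$ is (as in the paper) the operator of multiplication by $\prod_{i\le j}(1+x_ix_j)^{-1}=\sum_\mu(-1)^{|\mu|}s_{2\mu}$.

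\emph{Collapse to a single symmetric-function identity and its verification.} Because the coproduct on $\Lambda$ dual to multiplication sends $e_r\mapsto\sum_{a+b=r}e_a\otimes e_b$, the operator $E^\perp(t)$ is an algebra homomorphism $\Lambda\to\Lambda[[t]]$, whence $E^\perp(t)\circ M_g=M_{E^\perp(t)g}\circ E^\perp(t)$ for any $g$. Applying this with $g=\prod_{i\le j}(1+x_ix_j)^{-1}$ and using $M_{H(t)}^{-1}=M_{E(-t)}$, the whole statement collapses to the single equation
\[
E^\perp(t)\!\left(\prod_{i\le j}\frac{1}{1+x_ix_j}\right)=\left(\prod_k(1-x_kt)\right)\prod_{i\le j}\frac{1}{1+x_ix_j}.
\]
To prove this I would first compute $E^\perp(t)$ on power sums: via the standard involution $\omega$, since $H^\perp(t)$ is ``adjoin the variable $t$'' and hence sends $p_n\mapsto p_n+t^n$, one gets $E^\perp(t)p_n=p_n+(-1)^{n-1}t^n$. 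Substituting this into the power-sum expression for $\prod_{i\le j}(1+x_ix_j)^{-1}$ given by Lemma~\ref{power_sum_L_lemma}, and using that $E^\perp(t)$ (being an algebra homomorphism) commutes with $\exp$, the coefficient of each $p_n$ in the exponent acquires exactly the extra term $-t^n/n$, which reassembles to $\prod_k(1-x_kt)=\exp\bigl(-\sum_n p_nt^n/n\bigr)$; meanwhile the purely scalar (in $t$) contributions — one of order $t^{2n}$ coming from $p_n^2\mapsto(p_n+(-1)^{n-1}t^n)^2$ and one of order $t^{2n}$ coming from the linear term $(-1)^{n/2}\theta_n p_n$ — cancel in pairs, since together they equal $\bigl((-1)^n+(-1)^{n-1}\bigr)t^{2n}/(2n)=0$.

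\emph{Where the difficulty lies.} This last cancellation is the heart of the matter and the step I expect to require the most care: it is precisely the reason that $\prod_{i\le j}(1+x_ix_j)^{-1}$ — and no other symmetric function — converts the basis $\{Y_\lambda\}$ into $\{X_\lambda\}$, and getting the signs $(-1)^n$, $(-1)^{n/2}$ together with the parity factor $\theta_n$ from Lemma~\ref{power_sum_L_lemma} to line up correctly is the only genuinely delicate point. Everything preceding it — the reduction to multiplicativity of $\Psi$, the translation of the column-fusion rule through the Pieri rules, the adjoint manipulation, and the collapse to a plethystic identity — is formal.
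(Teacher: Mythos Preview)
Your proof is correct and follows essentially the same route as the paper's: both reduce (via the column-fusion rule and adjoints) to the operator identity $L\,E^\perp(t)=H(t)\,E^\perp(t)\,L$, express $L$ through the power-sum formula of Lemma~\ref{power_sum_L_lemma}, and verify it by the substitution $p_n\mapsto p_n+(-1)^{n-1}t^n$ together with the cancellation of the purely scalar $t$-terms. The only cosmetic difference is that you first factor out $E^\perp(t)$ using its algebra-homomorphism property, collapsing the operator identity to the scalar equation $E^\perp(t)\bigl(\prod_{i\le j}(1+x_ix_j)^{-1}\bigr)=E(-t)\prod_{i\le j}(1+x_ix_j)^{-1}$, whereas the paper carries $E^\perp(t)$ through via the Taylor-theorem form $\exp\bigl((-1)^{i-1}t^i\,\partial/\partial p_i\bigr)$; the resulting arithmetic with the signs $(-1)^n$, $(-1)^{n/2}$, and $\theta_n$ is line-for-line identical.
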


\begin{proof}
To prove the first statement, it suffices (by induction on $\lambda$) to show that the claimed expressions for the $\Phi(X_\lambda)$ multiply according to the rule defined by Theorem \ref{Thm: fusion with column}. When we encode the operations of removing and adding cells via the Pieri rule and dual Pieri rule, what we must prove becomes
\[
e_r L^\dagger (s_\lambda) = \sum_{i=0}^r L^\dagger (e_{r-i} h_i^\perp s_\lambda).
\]
This is precisely the assertion of the following equality of operators: $e_r L^\dagger = \sum_{i=0}^r L^\dagger e_{r-i} h_i^\perp$. We prove the adjoint of this equality, namely $L e_r^\perp = \sum_{i=0}^r h_i e_{r-i}^\perp L$. To prove this statement for all $r$ simultaneously, we multiply by $t^r$ and sum over $r \geq 0$; it is equivalent to prove the following identity of (operator-valued) generating functions:
\[
L E(t)^\perp = H(t)E(t)^\perp L.
\]
We rewrite all quantities in terms of power-sum symmetric functions. We have:
\begin{eqnarray*}
E(t)^\perp &=& \exp \left( \sum_i \frac{(-1)^{i-1}p_i^\perp}{i} t^i \right) = \exp \left( \sum_i (-1)^{i-1}\frac{\partial}{\partial p_i} t^i \right), \\
H(t) &=& \exp \left( \sum_i \frac{p_i}{i} t^i \right), \\
L &=& \exp \left( \sum_i \frac{(-1)^i p_i^2 + 2(-1)^{i/2} \theta_i p_{i}}{2i} \right).
\end{eqnarray*}
(Recall from Lemma \ref{power_sum_L_lemma} that $\theta_i$ is equal to $0$ if $i$ is odd, and equal to $1$ if $i$ is even.) We use an operator-theoretic version of Taylor's theorem, namely
\[
\exp \left(a \frac{\partial}{\partial x} \right) f(x) = f(x+a).
\]
Applying this termwise to the composition of operators $E(t)^\perp L$, we obtain:
\begin{eqnarray*}
E(t)^\perp L &=& \exp \left( \sum_i (-1)^{i-1}\frac{\partial}{\partial p_i} t^i \right)\exp \left( \sum_i \frac{(-1)^i p_i^2 + 2(-1)^{i/2} \theta_i p_{i}}{2i} \right) \\
&=& \exp \left( \sum_i \frac{(-1)^i (p_i + (-1)^{i-1}t^i)^2 + 2(-1)^{i/2} \theta_i (p_{i}+ (-1)^{i-1}t^i)}{2i} \right)\exp \left( \sum_i (-1)^{i-1}\frac{\partial}{\partial p_i} t^i \right) \\
&=& \exp \left( \sum_i \frac{(-1)^i p_i^2 -2t^ip_i + (-1)^i t^{2i} + 2(-1)^{i/2} \theta_i p_{i}+ 2(-1)^{i/2} \theta_i (-1)^{i-1}t^i}{2i} \right)E(t)^\perp \\
&=& \exp\left(-\sum_i \frac{p_i t^i}{i}\right) \exp \left(  \sum_i \frac{(-1)^i p_i^2 + 2(-1)^{i/2} \theta_i p_{i}}{2i} \right) \\
& & \times \exp \left( \sum_i \frac{(-1)^i t^{2i}+ 2(-1)^{i/2} \theta_i (-1)^{i-1}t^i}{2i} \right) E(t)^\perp .
\end{eqnarray*}
We recognise the first term as $H(t)^{-1}$, the second term as $L$, and the third term as $1$ (noting that all powers of $t$ cancel out). Thus we have:
\[
H(t)^{-1}LE(t)^\perp = E(t)^\perp L,
\]
which is equivalent to the statement
\[
\Phi(X_\lambda) = L^\dagger s_\lambda = \sum_{\mu} (-1)^{|\mu|} s_{\lambda/2\mu}.
\]
Furthermore,
\[
\Phi(X_{\lambda})=\sum_{\mu} (-1)^{|\mu|} s_{\lambda/2\mu}=\sum_{\substack{\mu,\nu \\  2|\mu| +|\nu|= |\lambda|}}  (-1)^{|\mu|} c_{2\mu,\nu}^{\lambda} s_{\nu}=\sum_{\substack{\mu,\nu \\  2|\mu| +|\nu|= |\lambda|}}  (-1)^{|\mu|} c_{2\mu,\nu}^{\lambda} \Phi(Y_{\nu}).
\]
Recall that $\Phi$ is an isomorphism, so
\[
X_{\lambda}=\sum_{\substack{\mu,\nu \\  2|\mu| +|\nu|= |\lambda|}}  (-1)^{|\mu|} c_{2\mu,\nu}^{\lambda} Y_{\nu},
\]
\end{proof}

\begin{theorem}\label{Thm: Y=X+2}
For a Young diagram $\lambda$, let us define $\lambda_{<}$ to be set of proper sub Young diagrams $\mu$, such that $|\lambda|-|\mu| \in 2\mathbb{N}^+$.
Then
\begin{align}\label{Equ: Y=X+2}
Y_{\lambda}&=X_{\lambda}+\sum_{\mu \in \lambda_{<}} n_{\lambda, \mu} X_{\mu}, \\
\sum_{\lambda} n_{\lambda, \mu} s_{\lambda}&=L^{-1}s_{\mu}= s_\mu \prod_{i \leq j} (1 + x_ix_j), \\
n_{\lambda, \mu}&=\sum_{r\geq0,\nu} b_{r,\nu} c_{\mu,\nu}^{\lambda}.
\end{align}

\end{theorem}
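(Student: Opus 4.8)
The plan is to transport the statement to the ring $\Lambda$ via the isomorphism $\Phi$ of Definition~\ref{Def: Iso}, and then to invert the change of basis $s_\mu \mapsto \Phi(X_\mu) = L^\dagger s_\mu$ furnished by Theorem~\ref{Thm: Main1}. Since $L$ is multiplication by the unit $\prod_{i\le j}(1+x_ix_j)^{-1}$ of the completion of $\Lambda$, its inverse $L^{-1}$ is multiplication by $\prod_{i\le j}(1+x_ix_j)$, so $L^\dagger$ is invertible with $(L^\dagger)^{-1} = (L^{-1})^\dagger$. The inverse change of basis is therefore governed by $(L^{-1})^\dagger$, and I expect its matrix entries in the Schur basis to be precisely the $n_{\lambda,\mu}$ of the statement.

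First I would pin down the formula for $n_{\lambda,\mu}$ together with its support. Define $n_{\lambda,\mu}$ by $L^{-1}s_\mu = \sum_\lambda n_{\lambda,\mu}s_\lambda$, which is exactly the second displayed identity $\sum_\lambda n_{\lambda,\mu}s_\lambda = s_\mu\prod_{i\le j}(1+x_ix_j)$. Expanding $L^{-1}=\sum_{r\ge 0,\nu}b_{r,\nu}s_\nu$ via Equation~\eqref{Equ: L-1} and applying the Littlewood--Richardson rule $s_\nu s_\mu = \sum_\lambda c_{\mu,\nu}^\lambda s_\lambda$ then yields $n_{\lambda,\mu}=\sum_{r\ge 0,\nu}b_{r,\nu}c_{\mu,\nu}^\lambda$, the third displayed identity. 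Since $e_r[h_2]$ is homogeneous of degree $2r$, we have $b_{r,\nu}=0$ unless $|\nu|=2r$; combined with $c_{\mu,\nu}^\lambda=0$ unless $\mu\subseteq\lambda$ and $|\mu|+|\nu|=|\lambda|$, this forces $n_{\lambda,\mu}\in\mathbb{N}$, $n_{\lambda,\lambda}=1$ (only the $r=0$ term $b_{0,\emptyset}c_{\mu,\emptyset}^\lambda$ survives on the diagonal), and $n_{\lambda,\mu}=0$ unless $\mu=\lambda$ or $\mu\in\lambda_<$.

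Next I would establish $Y_\lambda = \sum_\mu n_{\lambda,\mu}X_\mu$, which by the support statement above is the first displayed identity. Applying $\Phi$ and using $\Phi(Y_\lambda)=s_\lambda$ and $\Phi(X_\mu)=L^\dagger s_\mu$ from Theorem~\ref{Thm: Main1}, this becomes the identity $s_\lambda = L^\dagger\big(\sum_\mu n_{\lambda,\mu}s_\mu\big)$ in $\Lambda$. From $n_{\lambda,\mu} = \langle L^{-1}s_\mu, s_\lambda\rangle = \langle s_\mu, (L^{-1})^\dagger s_\lambda\rangle$ and orthonormality of the Schur basis, $\sum_\mu n_{\lambda,\mu}s_\mu = (L^{-1})^\dagger s_\lambda = \sum_{r\ge 0,\nu}b_{r,\nu}s_{\lambda/\nu}$, a finite sum. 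Hence $L^\dagger\big(\sum_\mu n_{\lambda,\mu}s_\mu\big) = L^\dagger (L^{-1})^\dagger s_\lambda = (L^{-1}L)^\dagger s_\lambda = s_\lambda$, as required. Finally, because $\{X_\mu\}$ is a basis of $\mathcal{G}$, the expansion of $Y_\lambda$ in this basis is unique, so the $n_{\lambda,\mu}$ just obtained agree with the extended constants of Equation~\eqref{Equ: Y=X+}.

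The step to be careful about is that $L$ and $L^{-1}$ are multiplication by \emph{infinite} products, so $L^\dagger = \sum_\mu(-1)^{|\mu|}s_{2\mu}^\perp$ and $(L^{-1})^\dagger = \sum_{r\ge 0,\nu}b_{r,\nu}s_\nu^\perp$ are infinite sums of skew operators, and one must justify the relations $\langle L^\dagger u, v\rangle = \langle u, Lv\rangle$ (likewise for $L^{-1}$) and $L^\dagger(L^{-1})^\dagger = (L^{-1}L)^\dagger$. This should be routine: both adjoint operators are degree-lowering, so they restrict to operators $\Lambda\to\Lambda$ under which $L^\dagger s_\lambda$ and $(L^{-1})^\dagger s_\lambda$ are finite sums; the adjointness relation holds because a polynomial paired against an element of the completion has only finitely many nonzero terms; and then $L^\dagger(L^{-1})^\dagger = (L^{-1}L)^\dagger = \operatorname{id}$ since $L^{-1}L$ is the identity multiplication operator. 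I do not anticipate any serious obstacle beyond this bookkeeping.
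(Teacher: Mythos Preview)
Your proposal is correct and follows essentially the same approach as the paper: both hinge on the adjointness $\langle L^{-1}s_\mu, L^\dagger s_\nu\rangle = \langle s_\mu, s_\nu\rangle$ to identify $n_{\lambda,\mu}$ with the Schur coefficients of $L^{-1}s_\mu$, and then expand via Equation~\eqref{Equ: L-1}. The only cosmetic difference is logical order---the paper starts from the already-defined extended constants of Equation~\eqref{Equ: Y=X+} and computes them, whereas you define the coefficients via $L^{-1}s_\mu$ and then match them to the extended constants by uniqueness---and your version is in fact slightly more careful about the support claim $\mu\in\lambda_<$ and the completion bookkeeping, both of which the paper leaves implicit.
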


\begin{proof}
We assume that
\[
Y_{\lambda}=\sum_{\mu} n_{\lambda, \mu} X_{\mu}, \text{ for some } n_{\lambda, \mu} \in \mathbb{N}.
\]
By Theorem \ref{Thm: Main1},
\[
s_{\lambda}=\sum_{\mu} n_{\lambda, \mu} L^{\dagger}s_{\mu}.
\]
Then
\[
\langle L^{-1} s_\nu, s_{\lambda} \rangle =\sum_{\mu} n_{\lambda, \mu} \langle L^{-1}s_\nu,  L^{\dagger}s_{\mu} \rangle=\sum_{\mu} n_{\lambda, \mu} \langle s_\nu,  s_{\mu} \rangle=n_{\lambda, \nu}.
\]
By Equation \eqref{Equ: L-1},
\[
\sum_{\lambda} n_{\lambda, \mu} s_{\lambda}= \sum_{\lambda} \langle L^{-1} s_\mu, s_{\lambda} \rangle s_{\lambda}= L^{-1}s_{\mu}= s_\mu \prod_{i \leq j} (1 + x_ix_j).
\]
Moreover,
\[
n_{\lambda, \mu}=\langle L^{-1} s_\mu, s_{\lambda} \rangle =\langle \sum_{r\geq0, \nu} b_{r,\nu} s_\nu s_\mu, s_{\lambda} \rangle =\sum_{r\geq0,\nu} b_{r,\nu} c_{\mu,\nu}^{\lambda}.
\]

\end{proof}

\begin{theorem}\label{Thm: Main2}
We have the following generating function for $\Phi(X_{\lambda})$,
\[
\sum_{\lambda} s_\lambda(x) \Phi(X_\lambda)(y) = \prod_{i_1 \leq i_2}\frac{1}{1 + x_ix_j} \prod_{i, j}\frac{1}{1-x_i y_j}.
\]
(Here $\Phi(X_\lambda)(y)$ means that the symmetric function $\Phi(X_\lambda)$ has variable set $\{y_j \}$.)
\end{theorem}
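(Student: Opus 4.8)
The plan is to reduce the identity to the Cauchy Identity and to the product formula $\prod_{i\le j}\frac{1}{1+x_ix_j}=\sum_{\mu}(-1)^{|\mu|}s_{2\mu}$ already obtained in the proof of Proposition~\ref{adjoint_formula_prop}. By Theorem~\ref{Thm: Main1} we have $\Phi(X_\lambda)=L^\dagger s_\lambda$, and by Proposition~\ref{adjoint_formula_prop} the operator $L^\dagger$ (acting in the $y$-variables) equals $\sum_{\mu}(-1)^{|\mu|}s_{2\mu}^\perp$. Substituting this and interchanging the order of summation gives
\[
\sum_{\lambda} s_\lambda(x)\,\Phi(X_\lambda)(y)=\sum_{\mu}(-1)^{|\mu|}\sum_{\lambda} s_\lambda(x)\,\bigl(s_{2\mu}^\perp s_\lambda\bigr)(y).
\]
All of this takes place in the completion $\hat\Lambda\otimes\hat\Lambda$, and each bidegree receives contributions from only finitely many indices, so the rearrangement is harmless.

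The key step is the general identity, valid for any Young diagram $\nu$,
\[
\sum_{\lambda} s_\lambda(x)\,\bigl(s_\nu^\perp s_\lambda\bigr)(y)=s_\nu(x)\sum_{\lambda} s_\lambda(x)\,s_\lambda(y).
\]
This follows by expanding $s_\nu^\perp s_\lambda=\sum_{\kappa}c_{\nu,\kappa}^{\lambda}s_\kappa$, performing the sum over $\lambda$ first via $\sum_{\lambda}c_{\nu,\kappa}^{\lambda}s_\lambda(x)=s_\nu(x)s_\kappa(x)$, and then recognising the residual sum $\sum_{\kappa}s_\kappa(x)s_\kappa(y)$ as the Cauchy kernel.

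Applying this with $\nu=2\mu$ and pulling the now-$\lambda$-independent factor out front, the left-hand side of the theorem becomes
\[
\Bigl(\sum_{\mu}(-1)^{|\mu|}s_{2\mu}(x)\Bigr)\Bigl(\sum_{\lambda}s_\lambda(x)s_\lambda(y)\Bigr).
\]
The first factor equals $\prod_{i\le j}\frac{1}{1+x_ix_j}$ exactly as in the proof of Proposition~\ref{adjoint_formula_prop}, and the second factor equals $\prod_{i,j}\frac{1}{1-x_iy_j}$ by the Cauchy Identity, yielding the asserted product. (Conceptually, the same computation says that applying $L^\dagger$ in the $y$-variables to the reproducing kernel $\sum_\lambda s_\lambda(x)s_\lambda(y)$ gives the same series as applying $L$ as a multiplication operator in the $x$-variables; this is the source of the symmetry between the two halves of the argument.)

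Since every ingredient is already in place, there is no genuine obstacle here; the only point requiring care is the bookkeeping of the infinite sums, that is, verifying that the interchanges of summation above respect each homogeneous bidegree — which they do, because for fixed bidegree only finitely many $\mu,\lambda,\kappa$ contribute.
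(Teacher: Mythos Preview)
Your proof is correct and is essentially the same argument as the paper's: both rewrite $\sum_\lambda s_\lambda(x)\,L^\dagger(s_\lambda)(y)$ as $\sum_\rho L(s_\rho)(x)\,s_\rho(y)$ via the reproducing-kernel property of the Cauchy kernel, and then invoke the Cauchy Identity and the product formula for $L$. The only difference is presentational: the paper does this in one stroke using the adjointness relation $\langle s_\rho, L^\dagger s_\lambda\rangle=\langle L s_\rho, s_\lambda\rangle$, whereas you expand $L^\dagger=\sum_\mu(-1)^{|\mu|}s_{2\mu}^\perp$ and verify the kernel identity term-by-term through Littlewood--Richardson coefficients.
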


\begin{proof}
Now we apply Theorem \ref{Thm: Main1} to prove this theorem.
We consider the first equation of Theorem \ref{Thm: Main1} as a having symmetric function variables $\{y_j\}$, and multiply by $s_\lambda(x)$. Summing over $\lambda$, we are required to show:
\[
\sum_{\lambda} s_\lambda(x) \sum_{\mu} (-1)^{|\mu|} s_{\lambda / 2\mu}(y) = \prod_{i_1 \leq i_2}\frac{1}{1 + x_ix_j} \prod_{i, j}\frac{1}{1-x_i y_j}
\]
We now calculate:
\begin{eqnarray*}
\sum_\lambda s_\lambda(x) \sum_\mu (-1)^{|\mu|} s_{\lambda / 2\mu}(y)
&=& \sum_\lambda s_\lambda(x) L^\dagger(s_\lambda)(y) \\
&=& \sum_\lambda \sum_\rho \langle s_\rho, L^\dagger(s_\lambda) \rangle s_\lambda(x) s_\rho(y) \\
&=& \sum_\rho \sum_\lambda \langle L (s_\rho), s_\lambda \rangle s_\lambda(x) s_\rho(y) \\
&=& \sum_\rho L (s_\rho)(x) s_\rho(y) \\
&=& \prod_{i_1 \leq i_2}\frac{1}{1 + x_ix_j} \sum_\rho s_\rho(x) s_\rho(y) \\
&=& \prod_{i_1 \leq i_2}\frac{1}{1 + x_ix_j} \prod_{i, j}\frac{1}{1-x_i y_j}.
\end{eqnarray*}
\end{proof}

\begin{theorem}\label{Thm: Main3}
We have the following fusion rules:
\[
R_{\mu, \nu}^ \lambda = \sum_{\alpha, \beta, \gamma} c_{\alpha, \beta}^\mu c_{\beta', \gamma}^\nu c_{\alpha, \gamma}^\lambda.
\]
(Here $\beta^\prime$ is the Young diagram dual to $\beta$.)
\end{theorem}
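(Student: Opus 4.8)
The plan is to transport the whole computation into the ring $\Lambda$ of symmetric functions through the isomorphism $\Phi$ and to read off $R_{\mu,\nu}^\lambda$ from a generating function in two alphabets. Write $\chi_\lambda:=\Phi(X_\lambda)$; since $\Phi$ is a ring isomorphism, $R_{\mu,\nu}^\lambda$ is precisely the structure constant of the multiplication of $\Lambda$ in the basis $\{\chi_\lambda\}$. By Theorem~\ref{Thm: Main1} we have $\chi_\lambda=L^\dagger s_\lambda$; and since $L$ is invertible on the completion of $\Lambda$ (its inverse being multiplication by $\prod_{i\le j}(1+x_ix_j)$), the basis dual to $\{\chi_\lambda\}$ with respect to $\langle -,- \rangle$ is $\{L^{-1}s_\lambda\}$, because $\langle\chi_\kappa,L^{-1}s_\lambda\rangle=\langle L^\dagger s_\kappa,L^{-1}s_\lambda\rangle=\langle s_\kappa,s_\lambda\rangle=\delta_{\kappa\lambda}$. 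I claim it is enough to prove, for every $\lambda$, the identity
\[
\sum_{\mu,\nu}R_{\mu,\nu}^\lambda\,s_\mu(x)\,s_\nu(y)=s_\lambda(x,y)\prod_{i,j}(1+x_iy_j).
\]
Indeed, expanding its right-hand side by the coproduct rule $s_\lambda(x,y)=\sum_{\alpha,\gamma}c_{\alpha,\gamma}^\lambda s_\alpha(x)s_\gamma(y)$, by the Dual Cauchy Identity $\prod_{i,j}(1+x_iy_j)=\sum_\beta s_\beta(x)s_{\beta'}(y)$, and then by the Littlewood--Richardson rule in each alphabet separately, rewrites it as $\sum_{\mu,\nu}\big(\sum_{\alpha,\beta,\gamma}c_{\alpha,\beta}^\mu c_{\beta',\gamma}^\nu c_{\alpha,\gamma}^\lambda\big)s_\mu(x)s_\nu(y)$, and comparing coefficients of $s_\mu(x)s_\nu(y)$ recovers the theorem.

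To prove the displayed identity I would bring in a third alphabet $\{w_j\}$ and evaluate the product
\[
\Big(\sum_\mu s_\mu(x)\chi_\mu(w)\Big)\Big(\sum_\nu s_\nu(y)\chi_\nu(w)\Big)=\sum_{\mu,\nu}s_\mu(x)\,s_\nu(y)\,\chi_\mu(w)\chi_\nu(w)=\sum_\kappa\chi_\kappa(w)\sum_{\mu,\nu}R_{\mu,\nu}^\kappa\,s_\mu(x)\,s_\nu(y)
\]
in two ways, the last step coming from $\chi_\mu(w)\chi_\nu(w)=\sum_\kappa R_{\mu,\nu}^\kappa\chi_\kappa(w)$. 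Pairing with $L^{-1}_w s_\lambda(w)$ in the $w$-alphabet and using the dual-basis relation above extracts $\sum_{\mu,\nu}R_{\mu,\nu}^\lambda s_\mu(x)s_\nu(y)$. On the other hand, Theorem~\ref{Thm: Main2} gives $\sum_\mu s_\mu(x)\chi_\mu(w)=L_x\prod_{i,j}(1-x_iw_j)^{-1}$, where $L_x$ denotes multiplication by $\prod_{i_1\le i_2}(1+x_{i_1}x_{i_2})^{-1}$ in the $x$-alphabet (and similarly $L_y$ for $y$); multiplying the two series and applying the Cauchy Identity in the merged alphabet $\{x_i\}\cup\{y_j\}$ paired against $\{w_j\}$ yields $L_xL_y\sum_\rho s_\rho(x,y)s_\rho(w)$. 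Pairing this with $L^{-1}_w s_\lambda(w)$ and using orthonormality of the Schur functions removes the $w$-alphabet and leaves $L_xL_y$ applied to $(L^{-1}s_\lambda)(x,y)$, that is, to the symmetric function $s_\lambda\prod_{i\le j}(1+x_ix_j)$ evaluated in the alphabet $\{x_i\}\cup\{y_j\}$.

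To finish, I would factor the product over the unordered pairs of the merged alphabet as
\[
\prod_{i_1\le i_2}(1+x_{i_1}x_{i_2})\cdot\prod_{i_1\le i_2}(1+y_{i_1}y_{i_2})\cdot\prod_{i,j}(1+x_iy_j)=L_x^{-1}L_y^{-1}\prod_{i,j}(1+x_iy_j),
\]
so that the leftover $L_xL_y$ cancels and one is left with exactly $s_\lambda(x,y)\prod_{i,j}(1+x_iy_j)$, as required. The step I expect to be the main nuisance is the bookkeeping: keeping straight which alphabet each operator $L$ and each Cauchy/Dual-Cauchy kernel acts on, and checking that the generating-function manipulations are legitimate in the appropriate degreewise completions of $\Lambda\otimes\Lambda\otimes\Lambda$. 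Once Theorems~\ref{Thm: Main1} and~\ref{Thm: Main2} are granted, everything else is formal; note in particular that $R_{\mu,\nu}^\lambda=0$ unless $|\mu|+|\nu|-|\lambda|$ is a nonnegative even integer, and this parity condition is automatically encoded in the right-hand side of the theorem.
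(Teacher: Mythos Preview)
Your proof is correct and follows essentially the same route as the paper's: multiply two copies of the generating function from Theorem~\ref{Thm: Main2}, use the factorization of $L$ on the merged alphabet versus the separate alphabets (the paper packages this as Proposition~\ref{comultiplication_L_prop}, you write it out directly), and arrive at the identity $\sum_{\mu,\nu}R_{\mu,\nu}^\lambda s_\mu(x)s_\nu(y)=s_\lambda(x,y)\prod_{i,j}(1+x_iy_j)$ before expanding via Dual Cauchy and Littlewood--Richardson. The only cosmetic difference is that the paper extracts the coefficient of $\Phi(X_\lambda)$ by invoking that $\{\Phi(X_\lambda)\}$ is a basis, whereas you pair against the explicit dual basis $\{L^{-1}s_\lambda\}$; these are equivalent.
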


\begin{proof}
Now we apply Theorem \ref{Thm: Main2} to prove this theorem.
To do this, we consider a suitable generating function for the $R_{\mu, \nu}^\lambda$, and express it in terms of two instances of the generating function in the second part of the theorem. We work with three variable sets: $\{x_i^{(1)}\}$, $\{x_i^{(2)}\}$, and $\{y_j\}$, and use Proposition \ref{comultiplication_L_prop}.
\begin{eqnarray*}
& &\sum_{\mu, \nu, \lambda} R_{\mu, \nu}^{\lambda} s_\mu(x^{(1)}) s_\nu(x^{(2)}) \Phi(X_\lambda)(y) \\
&=& \sum_{\mu, \nu} s_\mu(x^{(1)}) s_\nu(x^{(2)}) \Phi(X_\mu)(y)  \Phi(X_\nu)(y) \\
&=& \sum_{\mu} s_\mu(x^{(1)})\Phi(X_\mu)(y) \sum_{\nu}  s_\nu(x^{(2)}) \Phi(X_\nu)(y) \\
&=& \sum_{\mu} L(s_\mu)(x^{(1)}) s_\mu(y) \sum_{\nu} L(s_\nu)(x^{(2)}) s_\nu(y) \\
&=& \sum_{\mu, \nu} L(s_\mu)(x^{(1)}) L(s_\nu)(x^{(2)}) \sum_{\lambda}c_{\mu,\nu}^\lambda s_\lambda(y) \\
&=& \sum_{\mu, \nu} L_{\{x^{(1)}\}}L_{\{x^{(2)}\}} s_\mu(x^{(1)}) s_\nu(x^{(2)}) \sum_{\lambda}c_{\mu,\nu}^\lambda s_\lambda(y) \\
&=& \sum_{\mu, \nu} \left( \sum_\beta s_\beta(x^{(1)})s_{\beta^\prime}(x^{(2)})\right) L_{\{x^{(1)}\}\cup \{x^{(2)}\}} s_\mu(x^{(1)}) s_\nu(x^{(2)}) \sum_{\lambda}c_{\mu,\nu}^\lambda s_\lambda(y) \\
&=&  \left( \sum_\beta s_\beta(x^{(1)})s_{\beta^\prime}(x^{(2)})\right) L_{\{x^{(1)}\}\cup \{x^{(2)}\}} \sum_{\lambda}s_\lambda(y) \sum_{\mu, \nu}c_{\mu,\nu}^\lambda s_\mu(x^{(1)}) s_\nu(x^{(2)}) \\
&=&  \left( \sum_\beta s_\beta(x^{(1)})s_{\beta^\prime}(x^{(2)})\right) L_{\{x^{(1)}\}\cup \{x^{(2)}\}} \sum_{\lambda}s_\lambda(y) s_\lambda(x^{(1)}, x^{(2)}) \\
&=&  \left( \sum_\beta s_\beta(x^{(1)})s_{\beta^\prime}(x^{(2)})\right) \sum_\lambda s_\lambda(x^{(1)}, x^{(2)}) \Phi(X_\lambda)(y). \\
\end{eqnarray*}
At this point, we may take the coefficient of $\Phi(X_\lambda)(y)$ (these form a basis of $\Lambda$) to deduce
\begin{eqnarray*}
\sum_{\mu, \nu} R_{\mu, \nu}^{\lambda} s_\mu(x^{(1)}) s_\nu(x^{(2)})
&=& \left( \sum_\beta s_{\beta}(x^{(1)})s_{\beta^\prime}(x^{(2)})\right) s_\lambda(x^{(1)}, x^{(2)}) \\
&=& \left( \sum_\beta s_{\beta}(x^{(1)})s_{\beta^\prime}(x^{(2)})\right) \sum_{\alpha, \gamma} c_{\alpha, \gamma}^\lambda s_\alpha(x^{(1)}) s_\gamma(x^{(2)}) \\
&=& \sum_{\mu, \nu} \sum_{\alpha, \beta, \gamma} c_{\alpha, \beta}^\mu c_{\beta^\prime, \gamma}^\nu c_{\alpha, \gamma}^\lambda s_\mu(x^{(1)}) s_\nu(x^{(2)}).
\end{eqnarray*}
Taking coefficient of $s_\mu(x^{(1)}) s_\nu(x^{(2)})$, we recover the formula for $R_{\mu, \nu}^\lambda$.
\end{proof}

Note that if $|\alpha|=a$, $|\beta|=b$, $|\gamma|=c$,
and
\[
c_{\alpha, \beta}^\mu c_{\beta', \gamma}^\nu c_{\alpha, \gamma}^\lambda \neq 0,
\]
then $|\mu|=a+b$, $|\nu|=b+c$, $|\lambda|=a+c$. Conversely, $a$, $b$, $c$ are determined by $|\mu|$, $|\nu|$, $|\lambda|$. Thus the equation in Theorem \ref{Thm: Main3} is a finite sum.

\begin{definition}
Recall that the simple objects $\ty_{\gamma}$ and $\ty_{\gamma'}$ are dual to each other in $\sC$.
We denote $\cup_{\beta}$ to be the evaluation map in the hom space $\hom_{\sC}(\ty_{\beta} \otimes \ty_{\beta^\prime}, \emptyset)$.
For Young diagrams $\mu$, $\nu$, $\lambda$, $\alpha$, $\beta$, $\gamma$, with $|\mu|=a+b$, $|\nu|=b+c$, $|\lambda|=a+c$, $|\alpha|=a$, $|\beta|=b$, $|\gamma|=c$, we define the triangle map $\bigtriangledown: \hom_{\sC}(\ty_{\mu}, \ty_{\alpha} \otimes \ty_{\beta'}) \otimes  \hom_{\sC}(\ty_{\mu}, \ty_{\beta} \otimes \ty_{\gamma}) \otimes  \hom_{\sC}(\ty_{\alpha} \otimes \ty_{\gamma}, \ty_{\lambda}) \to \hom_{\sC}(\ty_{\mu} \otimes \ty_{\nu}, \ty_{\lambda})$ as
\begin{align*}
\bigtriangledown(\rho_1 \otimes \rho_2 \otimes \rho_3)&= \rho_3( \ty_{\mu} \otimes \cup_{\beta} \otimes \ty_{\lambda}) (\rho_1\otimes \rho_2) \\
&=
\raisebox{-2.5cm}{
\begin{tikzpicture}
\draw[thick] (0,0) --++(0,2);
\draw[thick] (1,2) arc (-180:0:.5);
\draw[thick] (1,0) --++(2,2);
\node at (-.2,1) {$\ty_{\alpha}$};
\node at (1-.2,1.8) {$\ty_{\beta}$};
\node at (1.7,1.8) {$\ty_{\beta^\prime}$};
\node at (1.7,1) {$\ty_{\gamma}$};
\node at (2,3.2) {$\ty_{\nu}$};
\node at (2,2.5) {$\rho_2$};
\node at (0,3.2) {$\ty_{\mu}$};
\node at (0,2.5) {$\rho_{1}$};
\node at (0,-1.2) {$\ty_{\lambda}$};
\node at (0,-.5) {$\rho_{3}$};
\draw[thick] (2,2) --(2.5,2.5)--++(0,1);
\draw[thick] (3,2) --(2.5,2.5);
\draw[thick] (0,2) --(.5,2.5)--++(0,1);
\draw[thick] (1,2) --(.5,2.5);
\draw[thick] (0,0) --(.5,-.5)--++(0,-1);
\draw[thick] (1,0) --(.5,-.5);
\end{tikzpicture}
}.
\end{align*}
\end{definition}

\begin{theorem}\label{Thm: Basis generic}
For any Young diagrams  $\mu$, $\nu$, $\lambda$, the triangle map $\bigtriangledown$ is an embedding map and
\begin{align*}
\hom_{\sC}(\ty_{\mu} \otimes \ty_{\nu}, \ty_{\lambda})&=\bigoplus_{\alpha,\beta,\gamma}
\bigtriangledown\left(\hom_{\sC}(\ty_{\mu}, \ty_{\alpha} \otimes \ty_{\beta'}) \otimes  \hom_{\sC}(\ty_{\mu}, \ty_{\beta} \otimes \ty_{\gamma}) \otimes  \hom_{\sC}(\ty_{\alpha} \otimes \ty_{\gamma}, \ty_{\lambda}) \right).
\end{align*}
\end{theorem}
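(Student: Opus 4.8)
The strategy is to combine a dimension count, powered by Theorem~\ref{Thm: Main3}, with a diagrammatic spanning argument modeled on the proofs of Lemmas~\ref{Lem: Basis 1} and~\ref{Lem: Basis 2}. First I would compare dimensions. Since $\sC^{q,1}$ is semisimple, $\dim\hom_{\sC}(\ty_\mu\otimes\ty_\nu,\ty_\lambda)=R_{\mu,\nu}^\lambda$, which by Theorem~\ref{Thm: Main3} equals $\sum_{\alpha,\beta,\gamma}c_{\alpha,\beta}^\mu c_{\beta',\gamma}^\nu c_{\alpha,\gamma}^\lambda$. On the other side, the three hom spaces in the source of $\bigtriangledown$ are all degree preserving, in the sense that $|\alpha|+|\beta|=|\mu|$, $|\beta|+|\gamma|=|\nu|$, $|\alpha|+|\gamma|=|\lambda|$ (these are exactly the size constraints built into the definition of $\bigtriangledown$). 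For such sizes, Proposition~\ref{Prop: iso} together with \eqref{Equ: sn1} identifies each of them with the corresponding Hecke-algebra hom space, so (using semisimplicity to pass between $\hom_{\sC}(A,B)$ and $\hom_{\sC}(B,A)$) their dimensions are the Littlewood--Richardson coefficients $c_{\alpha,\beta'}^\mu$, $c_{\beta,\gamma}^\nu$ and $c_{\alpha,\gamma}^\lambda$. Summing the products over all $\alpha,\beta,\gamma$ of the prescribed sizes and re-indexing $\beta\leftrightarrow\beta'$ recovers precisely $\sum_{\alpha,\beta,\gamma}c_{\alpha,\beta}^\mu c_{\beta',\gamma}^\nu c_{\alpha,\gamma}^\lambda=R_{\mu,\nu}^\lambda$. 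Therefore it suffices to show that the images of the $\bigtriangledown$'s span $\hom_{\sC}(\ty_\mu\otimes\ty_\nu,\ty_\lambda)$: the equality of dimensions will then automatically force the sum to be direct and each $\bigtriangledown$ to be injective.

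For the spanning statement, set $n=a+b+c$ and regard $\hom_{\sC}(\ty_\mu\otimes\ty_\nu,\ty_\lambda)$ as a subspace of $\sC_n$. Given $f$ in it, write $f=\ty_\lambda f(\ty_\mu\otimes\ty_\nu)$ and expand $f$ in the basis $\mathcal{B}_n$. If a diagram $\hat p$ has a pair among the strands of $\ty_\mu$ or of $\ty_\nu$ it is killed by $\ty_\mu\otimes\ty_\nu$, and if it has a pair among the strands of $\ty_\lambda$ it is killed by $\ty_\lambda$; in each case the offending cup or cap yields a morphism into, or out of, a tensor power of $X$ too short to contain $\ty_\mu$, $\ty_\nu$ or $\ty_\lambda$. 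The pairings $p$ that survive are exactly those with no pair internal to any of the three blocks, so they have $b$ pairs joining the $\mu$- and $\nu$-blocks, $a$ joining the $\mu$- and $\lambda$-blocks, and $c$ joining the $\nu$- and $\lambda$-blocks (the unique solution of $x+y=a+b$, $x+z=b+c$, $y+z=a+c$). This is the set $S$ of Lemma~\ref{Lem: Basis 2}, so via the bijection $\iota$ each such $\hat p$ factors as $\hat p_3\,\hat p_{a,b,c}\,(\hat p_1\otimes\hat p_2)$; hence $f$ is a linear combination of morphisms $(\ty_\lambda\hat p_3)\,\hat p_{a,b,c}\,\bigl((\hat p_1\ty_\mu)\otimes(\hat p_2\ty_\nu)\bigr)$.

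To finish, I would insert resolutions of the identity on each leg of $\hat p_{a,b,c}$, exactly as in the proof of Lemma~\ref{Lem: Basis 1}. On the $\ty_\mu$-leg, between $\hat p_1\ty_\mu$ and $\hat p_{a,b,c}$ insert $1_{X^a}\otimes 1_{X^b}$ and break each factor into isotypic projections; composing with $\ty_\mu$ kills every summand $\ty_\alpha\otimes\ty_\sigma$ in which $\ty_\mu$ does not occur, and $\ty_\mu$ can occur only when $|\alpha|=a$ and $|\sigma|=b$ (since $|\alpha|\le a$, $|\sigma|\le b$ and $|\alpha|+|\sigma|\ge|\mu|=a+b$). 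This exhibits the $\ty_\mu$-leg as a sum of morphisms $\rho_1\in\hom_{\sC}(\ty_\mu,\ty_\alpha\otimes\ty_\sigma)$; symmetrically the $\ty_\nu$-leg produces $\rho_2\in\hom_{\sC}(\ty_\nu,\ty_\tau\otimes\ty_\gamma)$ with $|\tau|=b$, $|\gamma|=c$, and the $\ty_\lambda$-leg produces $\rho_3\in\hom_{\sC}(\ty_\alpha\otimes\ty_\gamma,\ty_\lambda)$. The $b$-strand cup of $\hat p_{a,b,c}$ now reads $\ty_\sigma$ against $\ty_\tau$, hence becomes a morphism in $\hom_{\sC}(\ty_\sigma\otimes\ty_\tau,\emptyset)$, which vanishes unless $\ty_\tau$ is dual to $\ty_\sigma$; by Proposition~\ref{Prop: dual object} this forces $\tau=\sigma'=:\beta$ and makes that morphism a scalar multiple of $\cup_\beta$. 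Thus every term of $f$ lies in $\bigtriangledown\bigl(\hom_{\sC}(\ty_\mu,\ty_\alpha\otimes\ty_{\beta'})\otimes\hom_{\sC}(\ty_\nu,\ty_\beta\otimes\ty_\gamma)\otimes\hom_{\sC}(\ty_\alpha\otimes\ty_\gamma,\ty_\lambda)\bigr)$ for suitable $\alpha,\beta,\gamma$ of the prescribed sizes, which proves spanning and hence the theorem.

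The step I expect to be the main obstacle is the last one: verifying carefully that the resolution of the identity on the three legs produces exactly the hom spaces appearing in $\bigtriangledown$ and no spurious smaller-size contributions. The key point is the degree count forcing the intermediate labels to have sizes exactly $a$, $b$, $c$ --- a priori the resolution of $1_{X^a}$ allows labels of every size $\le a$ --- and this must be combined cleanly with the fact, from Proposition~\ref{Prop: iso}, that in the degree-preserving range the $\sC$-fusion multiplicities already coincide with the Hecke/Littlewood--Richardson coefficients, so that the bookkeeping closes up.
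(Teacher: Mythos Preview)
Your proposal is correct and follows essentially the same strategy as the paper: prove that the triangle-map images span $\hom_{\sC}(\ty_{\mu}\otimes\ty_{\nu},\ty_{\lambda})$, and then use the dimension count from Theorem~\ref{Thm: Main3} (together with Proposition~\ref{Prop: iso}) to force injectivity and directness. The paper's spanning argument is a bit more streamlined than yours---it begins directly with the factored spanning set $\{x_{p_1,p_2,p_3}:p_i\in P_\bullet\}$ of $\sC_n$ and inserts the idempotents $s_a,s_b,s_c$ on the legs of $\hat p_{a,b,c}$ (these come for free from $\ty_\mu,\ty_\nu,\ty_\lambda$ via Equation~\eqref{Equ: sn0} and the $180^\circ$-rotation invariance of $s_b$), so that $|\alpha|=a$, $|\beta|=b$, $|\gamma|=c$ are forced immediately; this bypasses both your preliminary reduction to pairings in $S$ and the size-forcing argument you correctly flagged as the main obstacle.
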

\begin{proof}
Similarly to the proof of Lemma \ref{Lem: Basis 1}, we
take \[
\tilde{x}_{p_1,p_2,p_3}=\ty_{\lambda} x_{p_1,p_2,p_3} (\ty_{\mu}\otimes \ty_{\nu}).
\]
Then
 $\{\tilde{x}_{p_1,p_2,p_3} : p_1 \in P_{a+b}, p_2 \in P_{b+c}, p_3 \in P_{a+c}.\}$ is a spanning set of $\hom_{\sC}(\ty_{\mu} \otimes \ty_{\nu}, \ty_{\lambda})$.
Note that the $180^{\circ}$ rotation of $s_{b}$ is $s_{b}$. So
\begin{align*}
\tilde{x}_{p_1,p_2,p_3}&=
\raisebox{-2.5cm}{
\begin{tikzpicture}
\draw[thick] (0,0) --++(0,2);
\draw[thick] (1,2) arc (-180:0:.5);
\draw[thick] (1,0) --++(2,2);
\node at (-.2,1) {$s_a$};
\node at (1-.2,1.8) {$s_b$};
\node at (1.7,1.8) {$s_b$};
\node at (1.7,1) {$s_c$};
\node at (2,3.2) {$\ty_{\nu}$};
\node at (2,2.5) {$\hat{p}_2$};
\node at (0,3.2) {$\ty_{\mu}$};
\node at (0,2.5) {$\hat{p}_1$};
\node at (0,-1.2) {$\ty_{\lambda}$};
\node at (0,-.5) {$\hat{p}_3$};
\draw[thick] (2,2) --(2.5,2.5)--++(0,1);
\draw[thick] (3,2) --(2.5,2.5);
\draw[thick] (0,2) --(.5,2.5)--++(0,1);
\draw[thick] (1,2) --(.5,2.5);
\draw[thick] (0,0) --(.5,-.5)--++(0,-1);
\draw[thick] (1,0) --(.5,-.5);
\end{tikzpicture}
}
=\sum_{\alpha,\beta,\gamma} \sum_{\rho_1,\rho_2,\rho_3}
\raisebox{-2.5cm}{
\begin{tikzpicture}
\draw[thick] (0,0) --++(0,2);
\draw[thick] (1,2) arc (-180:0:.5);
\draw[thick] (1,0) --++(2,2);
\node at (-.2,1) {$\ty_{\alpha}$};
\node at (1-.2,1.8) {$\ty_{\beta}$};
\node at (1.7,1.8) {$\ty_{\beta^\prime}$};
\node at (1.7,1) {$\ty_{\gamma}$};
\node at (2,3.2) {$\ty_{\nu}$};
\node at (2,2.5) {$\rho_2$};
\node at (0,3.2) {$\ty_{\mu}$};
\node at (0,2.5) {$\rho_{1}$};
\node at (0,-1.2) {$\ty_{\lambda}$};
\node at (0,-.5) {$\rho_{3}$};
\draw[thick] (2,2) --(2.5,2.5)--++(0,1);
\draw[thick] (3,2) --(2.5,2.5);
\draw[thick] (0,2) --(.5,2.5)--++(0,1);
\draw[thick] (1,2) --(.5,2.5);
\draw[thick] (0,0) --(.5,-.5)--++(0,-1);
\draw[thick] (1,0) --(.5,-.5);
\end{tikzpicture}
} \\
&=\sum_{\alpha,\beta,\gamma} \sum_{\rho_1,\rho_2,\rho_3}\bigtriangledown(\rho_1,\rho_2,\rho_3).
\end{align*}
for some Young diagrams $\alpha$, $\beta$, $\gamma$, with $|\alpha|=a$, $|\beta|=b$, $|\gamma|=c$, and some morphisms $\rho_{1}\otimes \rho_2 \otimes \rho_3 \in \hom_{\sC}(\ty_{\mu}, \ty_{\alpha} \otimes \ty_{\beta'}) \otimes  \hom_{\sC}(\ty_{\mu}, \ty_{\beta} \otimes \ty_{\gamma}) \otimes  \hom_{\sC}(\ty_{\alpha} \otimes \ty_{\gamma}, \ty_{\lambda})$.
Therefore
$$\bigcup_{\alpha,\beta,\gamma}
\bigtriangledown\left(\hom_{\sC}(\ty_{\mu}, \ty_{\alpha} \otimes \ty_{\beta'}) \otimes  \hom_{\sC}(\ty_{\mu}, \ty_{\beta} \otimes \ty_{\gamma}) \otimes  \hom_{\sC}(\ty_{\alpha} \otimes \ty_{\gamma}, \ty_{\lambda}) \right)$$
is a spanning set of $\hom_{\sC}(\ty_{\mu} \otimes \ty_{\nu}, \ty_{\lambda})$. By Proposition \ref{Prop: iso},
\begin{align*}
R_{\mu, \nu}^ \lambda &= \dim \hom_{\sC}(\ty_{\mu} \otimes \ty_{\nu}, \ty_{\lambda}) \\
&\leq \sum_{\alpha,\beta,\gamma} \dim \hom_{\sC}(\ty_{\mu}, \ty_{\alpha} \otimes \ty_{\beta'}) \times \dim  \hom_{\sC}(\ty_{\mu}, \ty_{\beta} \otimes \ty_{\gamma}) \times \dim \hom_{\sC}(\ty_{\alpha} \otimes \ty_{\gamma}, \ty_{\lambda})  \\
&= \sum_{\alpha,\beta,\gamma} \dim \hom_{H}(y_{\mu}, y_{\alpha} \otimes y_{\beta'}) \times \dim  \hom_{H}(y_{\mu}, y_{\beta} \otimes y_{\gamma}) \times \dim \hom_{H}(y_{\alpha} \otimes y_{\gamma}, y_{\lambda})  \\
&= \sum_{\alpha, \beta, \gamma} c_{\alpha, \beta}^\mu c_{\beta', \gamma}^\nu c_{\alpha, \gamma}^\lambda.
\end{align*}
By Theorem \ref{Thm: Main3}, the equality holds. So
the triangle map $\bigtriangledown$ is an embedding map and
\begin{align*}
\hom_{\sC}(\ty_{\mu} \otimes \ty_{\nu}, \ty_{\lambda})&=\bigoplus_{\alpha,\beta,\gamma}
\bigtriangledown\left(\hom_{\sC}(\ty_{\mu}, \ty_{\alpha} \otimes \ty_{\beta'}) \otimes  \hom_{\sC}(\ty_{\mu}, \ty_{\beta} \otimes \ty_{\gamma}) \otimes  \hom_{\sC}(\ty_{\alpha} \otimes \ty_{\gamma}, \ty_{\lambda}) \right).
\end{align*}
\end{proof}

\begin{remark}
Combining Theorem \ref{Thm: Basis generic} and Proposition \ref{Prop: iso}, we can construct an explicit basis of $\hom_{\sC}(\ty_{\mu} \otimes \ty_{\nu}, \ty_{\lambda})$ using $s_n$ and the basis of the hom spaces
$\hom_{H}(\ty_{\mu}, y_{\alpha} \otimes y_{\beta'})$, $\hom_{H}(y_{\mu}, y_{\beta}$, $ y_{\gamma})$, $\hom_{H}(y_{\alpha} \otimes y_{\gamma}, y_{\lambda})$ in the Hecke algebra $H$.
Applying the evaluation algorithm of the Yang-Baxter relation, we obtain the 6j-symbols of $\sC$. 

When the Young diagrams are small, the 6j-symbols can be computed by hand or by computer. We do not expect to compute 6j-symbols for large Young diagrams in this way, the complexity of this algorithm grows exponentially w. r. t. the size of the Young diagrams. Even computing the 6j-symbols for $Rep(H(q))$ remains challenging.
\end{remark}

\end{document}